\newtheorem{theorem}{Theorem}[section]
\newtheorem{proposition}[theorem]{Proposition}
\newtheorem{lemma}[theorem]{Lemma}
\newtheorem{question}[theorem]{Question}
\newtheorem{problem}[theorem]{Problem}
\newtheorem{corollary}[theorem]{Corollary}
\newtheorem*{claim*}{Claim}
\theoremstyle{definition}
\newtheorem{example}[theorem]{Example}
\newcommand{\N}{\Bbb N}
\newcommand{\F}{\Bbb F}
\newcommand{\G}{G(K)}
\newcommand{\C}{\Bbb C}
\newcommand{\Z}{\Bbb Z}
\newcommand{\Q}{\Bbb Q}
\newcommand{\D}{\Delta}
\newcommand{\la}{\langle}
\newcommand{\ra}{\rangle}
\newcommand{\ord}{\mathcal{O}}
\newcommand{\MN}{\mathcal{MN}}
\newcommand{\pr}{\mathrm{pr}}
\theoremstyle{remark}
\newtheorem{remark}[theorem]{Remark}
\numberwithin{equation}{section}
\begin{document}
\title{Twisted Alexander vanishing order of knots II} 

\author{Katsumi Ishikawa, Takayuki Morifuji, and Masaaki Suzuki}

\thanks{2020 {\it Mathematics Subject Classification}. 
Primary 57K14, Secondary 57K10.}

\thanks{{\it Key words and phrases.\/}
Twisted Alexander polynomial, TAV group, TAV order.}

\begin{abstract}
In our previous work, we introduced the notion of the twisted Alexander vanishing order of knots, defined as the order of the smallest finite group for which the corresponding twisted Alexander polynomial vanishes. In this paper, we explore several properties of this invariant in detail and present a list of twisted Alexander vanishing groups of order less than $201$.
\end{abstract}

\address{Research Institute for Mathematical Sciences, Kyoto University, Kyoto 606-8502, Japan}
\email{katsumi@kurims.kyoto-u.ac.jp}

\address{Department of Mathematics, Hiyoshi Campus, Keio University, Yokohama 223-8521, Japan}
\email{morifuji@keio.jp}

\address{Department of Frontier Media Science, 
Meiji University, 4-21-1 Nakano, Nakano-ku, Tokyo 
164-8525, Japan}
\email{mackysuzuki@meiji.ac.jp}

\maketitle

\section{Introduction}

The twisted Alexander polynomial is introduced by Lin \cite{Lin01-1} for knots in the $3$-sphere $S^3$, and by Wada \cite{Wada94-1} for finitely presentable groups. It is a generalization of the classical Alexander polynomial, and is defined for the pair of a group (e.g., a knot group) and its representation. The theory of twisted Alexander polynomials has developed over the last twenty-five years. For example, it is known that fibered $3$-manifolds are detected by twisted Alexander polynomials associated with finite groups (see \cite{FV11-1}). See \cite{FV10-1}, \cite{Morifuji15-1} for other applications to knot theory and low-dimensional topology. 

We call a finite group $G$ a \textit{twisted Alexander vanishing (TAV) group of a knot $K$} in $S^3$ if there exists an epimorphism $f$ of the knot group $G(K)$ onto $G$ such that the twisted Alexander polynomial associated with  the composition of $f$ and the regular representation of $G$ is zero (see \cite{IMS23-1}). Furthermore, we call $K$ a \textit{fibered knot} if the complement of $K$ in $S^3$ admits a structure of a surface bundle over the circle such that the closures of the fiberes are Seifert surfaces. Then, a vanishing theorem for non-fibered knots due to Friedl and Vidussi is described as follows: 

\begin{theorem}[{\cite[Theorem~1.2]{FV13-1}}]\label{thm:FV}
Every non-fibered knot admits a TAV group. 
\end{theorem}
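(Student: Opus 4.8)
The plan is to recast fiberedness group-theoretically via Stallings' criterion, translate the vanishing of a twisted Alexander polynomial into a statement about first Betti numbers of covers by Shapiro's lemma, and then locate the required finite quotient using the structure theory of $3$-manifold groups.

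First I would invoke Stallings' fibration theorem: a knot exterior $X_K$ is irreducible, so $K$ is fibered if and only if the commutator subgroup $\GG=\ker\phi_K$ is finitely generated, where $\phi_K\colon\G\to\Z$ is the abelianization. This property descends to finite covers: for an epimorphism $f\colon\G\to G$ onto a finite group with associated cover $p\colon X_f\to X_K$, the group $\ker(p^{*}\phi_K)=\ker f\cap\ker\phi_K$ has index at most $|G|$ in $\ker\phi_K$, so it is finitely generated exactly when $\ker\phi_K$ is. Hence ``$K$ is non-fibered'' is equivalent to: $\ker f\cap\ker\phi_K$ is not finitely generated for \emph{every} epimorphism $f$ onto a finite group.

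Second, I would reinterpret the twisted Alexander polynomial homologically. For such an $f$, let $\wt Y\to X_K$ be the cover corresponding to $\ker f\cap\ker\phi_K$; it is a finite cover of the infinite cyclic cover $X_K^\infty$ of $X_K$. By Shapiro's lemma the twisted homology of $X_K$ attached to $\phi_K$ and the composition of $f$ with the regular representation of $G$ is, as a rational vector space, a finite direct sum of copies of $H_*(\wt Y;\Q)$ carrying a residual $\Z$-action; since an Alexander polynomial is zero precisely when its module has positive rank, the twisted Alexander polynomial for $f$ and the regular representation vanishes if and only if $\dim_\Q H_1(\wt Y;\Q)=\infty$. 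Thus it suffices to produce a single epimorphism $f$ onto a finite group for which $b_1(\wt Y)=\infty$. (Any such $f$ must fail to factor through $\Z$; otherwise $\wt Y=X_K^\infty$ and $\dim_\Q H_1(X_K^\infty;\Q)=\deg\Delta_K<\infty$.)

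The real content is constructing such an $f$, and here I would follow Friedl--Vidussi. By Agol's virtual fibering theorem — relying on Wise's theorem that $\G$ is virtually special, in particular subgroup separable — $X_K$ has finite covers fibering over $S^1$, and by Gabai the Thurston norm is multiplicative under finite covers. By the reduction above, non-fiberedness means $p^{*}\phi_K$ never lies in the open cone on a fibered face of the Thurston norm ball of any finite cover. One then argues that if the twisted Alexander polynomial of $(X_K,\phi_K)$ for the regular representation of \emph{every} finite quotient were nonzero, the associated twisted Alexander seminorms — bounded above by the Thurston norm via the twisted McMullen inequality (cf.\ Friedl--Kim), and sharp on fibered cones — would force $p^{*}\phi_K$ into such a cone in some finite cover, a contradiction; alternatively, one runs a taut sutured manifold hierarchy for the complement of a minimal genus Seifert surface (Gabai), realizes it in a finite cover by subgroup separability, and checks that there the infinite cyclic cover carries infinitely many independent $1$-cycles. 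I expect this final step to be the main obstacle: the Stallings and Shapiro reductions are formal, but upgrading the qualitative failure of finite generation of $\ker f\cap\ker\phi_K$ to the quantitative statement $b_1(\wt Y)=\infty$ is the heart of Friedl--Vidussi's argument and genuinely requires Agol's and Wise's theorems together with Gabai's sutured manifold theory.
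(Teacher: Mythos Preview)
The paper does not prove Theorem~\ref{thm:FV}; it is quoted verbatim from Friedl--Vidussi \cite[Theorem~1.2]{FV13-1} and used as a black box throughout, so there is no ``paper's own proof'' to compare your attempt against.

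As for your outline itself: the two reductions you give are correct. Stallings' criterion and the finite-index argument do show that non-fiberedness is inherited by the kernels $\ker f\cap\ker\phi_K$, and the Shapiro identification of the twisted homology for the regular representation with $H_1$ of the $(f\times\phi_K)$-cover is exactly the translation used in \cite{FV13-1} (and implicit in Remark~\ref{rmk:torsion} here). Your third paragraph, however, is not a proof but a list of ingredients, as you yourself concede in the last sentence. The sketch ``twisted Alexander seminorms would force $p^{*}\phi_K$ into a fibered cone'' is not how the argument actually runs: Friedl--Vidussi do not use a McMullen-type norm comparison directly, but rather combine Agol's virtual RFRS/virtual fibering with a careful analysis of how the Thurston norm ball and its fibered faces behave in a tower of finite covers, together with an earlier result of theirs that non-fiberedness of a class forces the vanishing of some twisted polynomial once the class becomes \emph{quasi-fibered} (lies in the closure of a fibered cone) in a finite cover. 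Your ``alternatively'' clause about realizing a sutured hierarchy in a finite cover is a different heuristic and would need substantial work to be made into an argument. So: right skeleton, right references, but the decisive step is only gestured at, and one of the gestures points in a slightly wrong direction.
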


Let us recall the notion of the \textit{twisted Alexander vanishing order} 
$\ord(K)$ of a non-fibered knot $K$ (see \cite{IMS23-1} for details). 
We define $\ord(K)$ to be the order of the smallest TAV group of $K$, 
and call it the \textit{TAV order} of $K$ in short (we call it the \textit{minimal order} of $K$ in \cite{MS22-1}). Here, we note that the smallest TAV group might be not unique for a non-fibered knot $K$ in general (see Corollary \ref{cor:273}). 
For a fibered knot $K$, we set $\ord(K)=+\infty$, because its twisted Alexander polynomial is monic 
(see \cite{Cha03-1}, \cite{FK06-1}, \cite{GKM05-1}), 
and hence never vanishes. Hereafter, by abuse of terminology, 
we also call a finite group $G$ a \textit{TAV group} if $G$ is a TAV group of some knot $K$. 

In our previous paper \cite{IMS23-1}, we provide a characterization of a TAV group. In order to state the result precisely, 
we recall some terminologies in group theory. 
The \textit{weight} of a group $G$, denoted by $w (G)$, is the smallest integer $n$ such that $G$ is the normal closure of $n$ elements. In order to remove ambiguity, we set $w(\{e\})=0$. For a group $G$, there are a knot $K$ and an epimorphism $f\colon G(K)\to G$ if and only if $G$ is finitely generated and $w(G)\leq1$ (see \cite{GA75-1}, \cite{Johnson80-1}). 
A finite group $G$ is a \textit{$p$-group} if and only if the order $|G|$ is a power of a prime number. 
We simply denote the commutator subgroup $[G,G]$ of a group $G$ by $G'$. Then, we have:

\begin{theorem}[{\cite[Theorem~1.5]{IMS23-1}}]\label{thm:main-4}
A finite group $G$ is a TAV group if and only if 
$w(G)=1$ and $G'$ is not a $p$-group. 
\end{theorem}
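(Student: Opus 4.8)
The plan is to prove the two implications separately; the forward implication is comparatively soft, while the realization in the backward implication is the heart of the matter.

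\textbf{Necessity.} Suppose $G$ is a TAV group, realized by a knot $K$ and an epimorphism $f\colon\G\to G$ with vanishing twisted Alexander polynomial. Since $\G$ is finitely generated and $\G^{\mathrm{ab}}=\Z$ is generated by a meridian, $G=f(\G)$ is finitely generated and $G/G'$ is cyclic; as $G\neq\{e\}$ (the twisted Alexander polynomial of the trivial representation is the ordinary Alexander polynomial, which is nonzero), we get $w(G)=1$. It remains to rule out $G'$ being a $p$-group, and I would prove the contrapositive: \emph{if $G'$ is a $p$-group, then for every knot $K$ and every epimorphism $f\colon\G\to G$ the associated twisted Alexander polynomial is nonzero.} Reducing coefficients modulo $p$, it suffices to show $\Delta^{\F_p[G]}_{K,f}\neq0$. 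Because $G'$ is a $p$-group, $\F_p[G']$ is a local ring, so $\F_p[G]$ has a filtration by $G$-submodules whose successive quotients are killed by the augmentation ideal of $\F_p[G']$, hence are $\F_p[G/G']=\F_p[\Z/n]$-modules. By multiplicativity of the Alexander invariant in short exact sequences of coefficient modules, $\Delta^{\F_p[G]}_{K,f}$ is, up to units, a product of twisted Alexander polynomials of $K$ for representations factoring through the cyclic quotient $\Z/n$, i.e.\ of specialisations of the ordinary Alexander polynomial of $K$ over $\F_p$ at roots of unity. None of these vanish, since $\Delta_K(1)=\pm1$ forces $\Delta_K\neq0$ in $\F_p[t^{\pm1}]$, so the product is nonzero.

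\textbf{Sufficiency, topological restatement.} Now assume $w(G)=1$ and $G'$ is not a $p$-group; we must produce a knot $K$ and an epimorphism $f\colon\G\to G$ for which the twisted Alexander polynomial vanishes. By \cite{GA75-1,Johnson80-1} such an epimorphism exists onto $G$ for \emph{some} knot, and Theorem~\ref{thm:FV} tells us non-fibered knots do have TAV groups, but neither statement pins down the target group, so a construction is needed. Write $X_K$ for the exterior of $K$ and $X_\infty\to X_K$ for its infinite cyclic cover, so $\pi_1(X_\infty)=\GG$. An epimorphism $f\colon\G\to G$ restricts to an epimorphism $\GG\to G'$, and the cover of $X_K$ corresponding to $\ker f\cap\GG$ is exactly the $G'$-cover $\widehat{X}_\infty\to X_\infty$. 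From the standard identification of the twisted Alexander polynomial for the regular representation with the order of the Alexander module of the corresponding cover, one gets $\Delta^{\Q[G]}_{K,f}=0$ if and only if $b_1(\widehat{X}_\infty)=\infty$. So the goal becomes: build a non-fibered knot $K$ together with a $G'$-cover of $X_\infty$ — compatible with a global epimorphism $\G\twoheadrightarrow G$, which amounts to the $\Z$-action on $\GG$ descending to an action of the finite group $\Z/n=G/G'$ on $G'$ — having infinite first Betti number.

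\textbf{Sufficiency, construction.} Choose $K$ non-fibered with a Thurston-norm-minimizing Seifert surface $R$ that is not a fiber, so that one of the two inclusions of $R$ into the manifold obtained by cutting $X_K$ along $R$ is $\pi_1$-injective but not $\pi_1$-surjective. Prescribing a surjection of the relevant surface/free groups onto building blocks of $G'$ and then invoking separability of the subgroups in play (residual finiteness of knot-group data, in the form exploited in \cite{FV13-1}) to pass to a finite cover in which the surface splits off extra homology, one arranges the $G'$-cover of $X_\infty$ to have $b_1=\infty$ while the deck group of $\widehat{X}_\infty\to X_K$ is exactly $G$. The hypothesis that $G'$ is not a $p$-group is used precisely here: by the necessity direction it is necessary, and conversely a finite group that is not a $p$-group has two distinct prime divisors, hence admits quotients that are not $q$-groups for both relevant primes $q$, so the $\F_q$-transfer collapse of the necessity argument fails simultaneously and the blown-up Betti number cannot be killed.

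\textbf{Main obstacle.} The routine parts are the homological translation and the mod-$p$ transfer computation. The genuine difficulty is the realization: forcing the epimorphism to land on \emph{exactly} $G$ (not a proper quotient, and not merely a group having $G$ as a quotient) while simultaneously guaranteeing the Betti-number blow-up, and in particular dealing with perfect groups $G=G'$ such as $\frak{A}_5$, where there is no abelian Alexander-module shortcut and one must argue genuinely in dimension three. I expect the cleanest organization to be via a short list of ``building block'' groups together with closure operations — central extensions and amalgams compatible with $w=1$ — reducing the general case to those blocks.
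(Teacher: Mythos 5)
Your necessity argument is essentially the standard one and is sound in outline: $w(G)=1$ because $G$ is a nontrivial quotient of a knot group, and if $G'$ is a $p$-group one filters $\F_p[G]$ by powers of the two-sided ideal generated by the augmentation ideal of $\F_p[G']$ (nilpotent, since $\F_p[G']$ is local), so every subquotient of the coefficient module is an $\F_p[G/G']$-module pulled back from the cyclic quotient; since $\Delta_K(1)=\pm1$ gives $\Delta_K\not\equiv0\pmod p$, each graded piece has $\F_p[t^{\pm1}]$-torsion twisted homology, and the long exact sequences propagate torsion-ness to $H_1(E_K;\F_p[G][t^{\pm1}])$, whence $\D_K^{\rho\circ f}(t)\neq0$. (Say ``preservation of torsion through the long exact sequence'' rather than ``multiplicativity of the Alexander invariant''; orders are not literally multiplicative here, but you only need non-vanishing.)

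The sufficiency direction, however, has a genuine gap: its entire content is the realization step that you defer to ``prescribing a surjection \dots and then invoking separability \dots one arranges the $G'$-cover of $X_\infty$ to have $b_1=\infty$ while the deck group is exactly $G$.'' That sentence \emph{is} the theorem, not a proof of it: the Friedl--Vidussi separability machinery behind Theorem \ref{thm:FV} produces \emph{some} finite quotient detecting a given non-fibered knot, and gives no control letting you hit a prescribed target group on the nose. Your closing claim that the hypothesis on $G'$ is ``used precisely here'' because ``the $\F_q$-transfer collapse of the necessity argument fails'' is circular --- the failure of one sufficient condition for non-vanishing proves nothing about vanishing. The construction this paper actually uses (Subsection \ref{subsec:3.0}, Theorem \ref{const-thm}) is quite different and concrete: by Gruenberg's lemma, a finite group of non-prime-power order contains either a cyclic subgroup of non-prime-power order or a non-abelian subgroup of weight one; take \emph{any} knot $K$ with an epimorphism $f_0\colon\G\to G$ (existence from $w(G)=1$), an unknotted nullhomologous circle $\alpha\subset E_K$ with $f_0(\alpha)$ normally generating such a subgroup $H\leq G'$, and form the satellite $K(\alpha,J)$ with $J$ a torus knot whose Alexander polynomial vanishes at $e^{2\pi i/|H|}$ (cyclic case) or whose group surjects onto $H$ sending the meridian to $f_0(\alpha)$ and the longitude to $e$ (non-abelian case). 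Vanishing is then verified either by a Mayer--Vietoris computation on the $(G\times\Z)$-cover showing the branched cyclic cover of $J$ contributes infinitely many copies of positive-rank homology (Proposition \ref{const-prop-1}), or by exhibiting an explicit nontrivial lift to $\Z[G\times\Z]\rtimes(G\times\Z)$ and invoking the lifting criterion of Theorem \ref{lifting-thm} (Proposition \ref{const-prop-2}). This satellite step, which handles perfect groups such as $A_5$ uniformly, is exactly the missing ingredient in your proposal.
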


For example, we easily see that any $p$-group is not a TAV group. Using Theorem \ref{thm:main-4} and Proposition \ref{pro:weight}(i) (see \cite[Corollaries 5.17, 5.18]{Chiodo14-1}), we can provide a list of TAV groups of order less than $201$. For more information, see Tables 1 to 3 in the appendix. 

Let $\mathcal{K}$ be the set of isotopy classes of oriented knots in $S^3$, and $\mathcal{N}\subset\mathcal{K}$ the subset consisting of non-fibered knots. Then, the TAV order of a knot induces a function 
$\ord \colon \mathcal{K}\to\mathbb{N}\cup\{+\infty\}$. In \cite{IMS23-1}, we also show some properties of $\ord(K)$ as follows: 

\begin{theorem}[{\cite[Theorem~1.2]{IMS23-1}}]\label{thm:main-1}
The TAV order $\ord\colon\mathcal{K}\to\mathbb{N}\cup\{+\infty\}$ has the following properties: 
\begin{itemize}
\item[(i)]
For any knot $K\in\mathcal{K}$, $\ord(K)\geq24$ holds. 
\item[(ii)]
The restriction $\ord|_{\mathcal{N}}\colon\mathcal{N}\to\mathbb{N}$ is unbounded. 
\item[(iii)]
For the connected sum $K_1\#K_2$ of two knots $K_1,K_2\in\mathcal{K}$, it holds that $\ord(K_1\#K_2)=\min\{\ord(K_1),\ord(K_2)\}$.
\item[(iv)]
If there is an epimorphism from $G(K_1)$ to $G(K_2)$, $\ord(K_1)\leq\ord(K_2)$ holds.
\item[(v)]
For a periodic knot $K$ and its quotient knot $K'$, 
$\ord(K)\leq\ord(K')$ holds. 
\item[(vi)]
If there is a proper degree one map $E_K\to E_{K'}$, 
where $E_K$ denotes the exterior of a knot $K$, 
then 
$\ord(K)\leq\ord(K')$ holds. 
\end{itemize}
\end{theorem}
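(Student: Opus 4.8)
The plan is to derive (i) from Theorem~\ref{thm:main-4} together with a finite check on small groups, to establish (iv) as the central monotonicity statement, and then to obtain (iii), (v) and (vi) from (iv) combined with the behaviour of the twisted Alexander polynomial under the corresponding topological operations; statement (ii) is the real difficulty and will be treated separately. For (i): if $K$ is fibered then $\ord(K)=+\infty$, so assume $K$ is non-fibered and let $G$ be its smallest TAV group. By Theorem~\ref{thm:main-4}, $w(G)=1$ and $G'$ is not a $p$-group; the latter forces at least two distinct primes to divide $|G'|$, so $|G'|\geq 6$, while $w(G)=1$ forces $G/G'$ to be cyclic. If $G$ is perfect then $|G|\geq 60$, since the smallest non-trivial perfect group has order $60$. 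Otherwise $[G:G']\geq 2$, so $|G|=[G:G']\cdot|G'|\geq 12$, and inspecting the groups of order $12$, $18$ and $20$ shows that none of them has commutator subgroup divisible by two distinct primes. Hence $|G|\geq 24$, i.e. $\ord(K)\geq 24$; this is sharp, since the normal closure of a transposition is all of $S_4$ (so $w(S_4)=1$) and $S_4'=A_4$ has order $2^2\cdot3$, whence $S_4$ is a TAV group by Theorem~\ref{thm:main-4}.

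For (iv), let $\phi\colon G(K_1)\twoheadrightarrow G(K_2)$ be an epimorphism; since both knot groups have infinite cyclic abelianization, $\phi$ respects the meridional class up to inversion, so the relevant twisted Alexander polynomials are compared within one Laurent polynomial ring. We may assume $\ord(K_2)<+\infty$; let $G$ be a smallest TAV group of $K_2$, realized by $g\colon G(K_2)\to G$ for which the twisted Alexander polynomial of $K_2$ associated with $g$ and the regular representation of $G$ vanishes. By the divisibility theorem of Kitano--Suzuki--Wada for twisted Alexander polynomials under a surjection of knot groups, the twisted Alexander polynomial of $K_1$ associated with $g\circ\phi$ and the regular representation is divisible by the (zero) twisted Alexander polynomial of $K_2$, hence is itself zero; so $G$ is a TAV group of $K_1$ and $\ord(K_1)\leq|G|=\ord(K_2)$. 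Statement (vi) is then immediate, since a proper degree-one map $E_K\to E_{K'}$ induces a $\pi_1$-surjection $G(K)\twoheadrightarrow G(K')$. For (v), let $K$ be periodic of period $n$ with quotient $K'$, and let $A$, $A'$ be the respective axes; removing the axes exhibits $H:=\pi_1(S^3\setminus(K\cup A))$ as the index-$n$ normal subgroup of $\Gamma:=\pi_1(S^3\setminus(K'\cup A'))$ with $\Gamma/H$ cyclic, generated by the image of the meridian $\mu$ of $A'$, so that $\Gamma=\langle H,\mu\rangle$. Filling in the axes identifies $G(K)$ with the quotient of $H$ by the normal closure of $\mu^n$ and $G(K')$ with the quotient of $\Gamma$ by the normal closure of $\mu$, so the inclusion $H\hookrightarrow\Gamma$ descends to an epimorphism $G(K)\twoheadrightarrow G(K')$; now apply (iv).

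For (iii), the two collapsing maps give epimorphisms $G(K_1\#K_2)\twoheadrightarrow G(K_i)$, $i=1,2$ (on the amalgam $G(K_1)*_{\langle\mu\rangle}G(K_2)$ take the identity on one factor and the abelianization onto $\langle\mu\rangle$ on the other), so (iv) yields $\ord(K_1\#K_2)\leq\min\{\ord(K_1),\ord(K_2)\}$. Conversely, let $G$ be a smallest TAV group of $K_1\#K_2$, realized by $\rho\colon G(K_1\#K_2)\to G$, put $\rho_i=\rho|_{G(K_i)}$ and $G_i=\rho(G(K_i))\leq G$. A Mayer--Vietoris argument for the amalgam over the meridian---whose twisted homology is a torsion module, the meridian acting invertibly---shows that the twisted Alexander polynomial of $K_1\#K_2$ for $\rho$ and the regular representation vanishes if and only if that of $K_1$ for $\rho_1$ or that of $K_2$ for $\rho_2$ does. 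Since the regular representation of $G$ restricted to $G_i$ is $[G:G_i]$ copies of the regular representation of $G_i$, this exhibits, for the relevant $i$, the group $G_i$ as a TAV group of $K_i$, whence $\min\{\ord(K_1),\ord(K_2)\}\leq\ord(K_i)\leq|G_i|\leq|G|=\ord(K_1\#K_2)$; with the first inequality this proves (iii).

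Statement (ii) asserts both that $\ord$ is finite on $\mathcal{N}$---which is Theorem~\ref{thm:FV}---and that it is unbounded there. For the latter, fix $N$; by Theorem~\ref{thm:main-4} there are only finitely many TAV groups $G_1,\dots,G_r$ of order at most $N$, so it suffices to produce a non-fibered knot $K$ admitting no epimorphism onto any $G_j$, as then $\ord(K)>N$. The metabelian $G_j$ are eliminated uniformly by choosing $K$ non-trivial with trivial Alexander polynomial---e.g. an untwisted Whitehead double---which is automatically non-fibered and whose knot group has no non-cyclic metabelian finite quotient (since its Alexander module vanishes, the commutator subgroup is perfect). It remains to rule out the finitely many non-metabelian $G_j$---groups close to simple, such as $S_4$, $A_5$, $GL(2,3)$ and $S_5$---simultaneously with non-fiberedness, and I expect this to require a carefully chosen infinite family (iterated Whitehead doubles, or two-bridge and Montesinos knots of prime-power determinant with no small quotient of type $PSL(2,\F_\ell)$) together with the representation theory governing the low-order finite quotients of that family. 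Exhibiting such a family, and verifying that its members have no small non-abelian finite quotient, is the heart of the argument.
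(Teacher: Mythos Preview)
This theorem is quoted from the authors' earlier paper \cite{IMS23-1} and is not proved in the present paper, so there is no proof here to compare against directly. I can, however, assess your argument on its own and against what the present paper reveals about the original proof.

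Your treatments of (i), (iii), (iv), (v) and (vi) are sound. For (i) the case split perfect/non-perfect together with the divisibility constraint $|G'|\mid |G|$, $[G:G']\ge 2$, $|G'|\in\{6,10\}$ indeed reduces the check to orders $12$, $18$, $20$; you might say explicitly why no other order below $24$ needs inspection. For (iv) the Kitano--Suzuki--Wada divisibility \cite{KSW05-1} is exactly the right tool, and (vi) follows since degree-one maps are $\pi_1$-surjective. Your construction in (v) of an epimorphism $G(K)\twoheadrightarrow G(K')$ via the branched-cover description is correct and reduces (v) to (iv). For (iii) the Mayer--Vietoris argument and the observation that $\mathrm{Res}^G_{G_i}\rho_G\cong\rho_{G_i}^{\oplus[G:G_i]}$ give the reverse inequality cleanly.

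The genuine gap is (ii). You correctly isolate it as the hard part, but your proposal is not a proof: eliminating the metabelian TAV groups via trivial Alexander polynomial is fine, yet for the remaining finitely many non-metabelian $G_j$ you only say you ``expect'' a suitable family to exist. The present paper gives a strong hint about the actual mechanism used in \cite{IMS23-1}: in Section~\ref{sec:5}(III) the authors construct, citing \cite[Proposition~2.3]{IMS23-1} and the proof of \cite[Theorem~1.2(ii)]{IMS23-1}, knots $K_n$ as $(n!,1)$-cables of a fixed non-fibered knot and assert $\ord(K_n)>n$. The idea is that for a $(p,1)$-cable, every homomorphism to a group of order at most $p$ is controlled (it must factor through the companion or collapse on the pattern), so the twisted Alexander polynomial for such small targets cannot vanish. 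This cabling argument is quite different from your Whitehead-double suggestion, and it succeeds precisely because cabling imposes a uniform arithmetic obstruction on \emph{all} small quotients at once, not just the metabelian ones. If you want to complete (ii), that is the route to pursue.
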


\begin{remark}\label{rmk:mirror}
Throughout this paper, we do not distinguish a knot $K$ from its mirror image $K^*$, since their knot groups are isomorphic and hence they share the TAV order. We also note that $\ord(K)$ does not depend on an orientation of $K$. 
\end{remark}

For a given knot $K$, it seems difficult to determine the TAV order $\ord(K)$ explicitly. However, we can do it for some non-fibered knots by computer-aided calculation. Here, we adopt Rolfsen's table \cite{Rolfsen76-1} to represent a prime knot with $10$ or fewer crossings. 

\begin{theorem}\label{thm:main-2}
For any prime knot $K$ with $10$ or fewer crossings, we have 
\begin{itemize}
\item[(i)]
$\ord(K)=24$, if $K=9_{35}, 9_{46}$,
\item[(ii)]
$\ord(K)=60$, if $K=10_{67}, 10_{120}, 10_{146}$,
\item[(iii)]
$\ord(K)=96$, if $K=10_{166}$,
\item[(iv)]
$\ord(K)=120$, if $K=8_{15}, 9_{25}, 9_{39}, 9_{41}, 9_{49}, 10_{58}$,
\item[(v)]
$\ord(K)=168$, if $K= 7_4, 8_3$,
\item[(vi)]
$\ord(K)\geq201$, otherwise.
\hspace{3.1cm} 
\end{itemize}
\end{theorem}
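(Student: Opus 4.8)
The plan is to reduce the determination of $\ord(K)$, for each knot in the list, to a finite and completely explicit search over the TAV groups of order less than $201$. If $K$ is fibered then $\ord(K)=+\infty\geq201$, so $K$ falls under case (vi); by the classification of fibered knots in Rolfsen's table this already accounts for a large part of the list. For a non-fibered $K$, Theorem \ref{thm:main-1}(i) gives $\ord(K)\geq24$, so it suffices to test, for every TAV group $G$ with $24\leq|G|<201$, whether $G(K)$ admits an epimorphism onto $G$ whose composition with the regular representation of $G$ has vanishing twisted Alexander polynomial. By Theorem \ref{thm:main-4} the candidates are exactly the weight-one groups whose commutator subgroup is not a $p$-group, and these are the groups tabulated in Tables 1--3 of the appendix; hence the search space is finite and explicitly known.

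For the upper bounds (i)--(v) I would, for each listed knot $K$, exhibit a concrete epimorphism $f\colon G(K)\twoheadrightarrow G$ onto an explicit TAV group of the asserted order (for instance $S_4$ for order $24$, $A_5$ for order $60$, a suitable group of order $96$, a group of order $120$ such as $\mathrm{SL}(2,5)$ or $S_5$, and $\mathrm{PSL}(2,7)$ for order $168$), starting from a Wirtinger or other deficiency-one presentation of $G(K)$ read off from Rolfsen's table. Composing $f$ with the regular representation and applying Wada's formula --- form the matrix of abelianized Fox derivatives of the relators, delete one column, and check that the resulting determinant vanishes over $\mathbb{Q}[t^{\pm1}]$ --- certifies that $G$ is a TAV group of $K$; equivalently one splits the regular representation into irreducible constituents and detects the vanishing on a single constituent, which keeps the matrices small. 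One then checks that no TAV group of strictly smaller order admits such an epimorphism from $G(K)$, which pins down $\ord(K)$ exactly; here the bound $\ord(K)\geq24$ and the short list of small TAV groups make this a modest amount of work.

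For case (vi) the task is the negative one: for every non-fibered prime knot $K$ with at most ten crossings not already listed, and for every TAV group $G$ of order less than $201$, verify that either there is no epimorphism $G(K)\twoheadrightarrow G$, or that for each such epimorphism $f$ the twisted Alexander polynomial of the composition of $f$ with the regular representation is nonzero. In practice one enumerates $\mathrm{Hom}(G(K),G)$ up to the action of $\mathrm{Aut}(G)$ (for example with the quotient routines of GAP applied to the knot-group presentation), discards the non-surjective homomorphisms, and runs the Wada/Fox-calculus computation above on the survivors. The bookkeeping can be trimmed using Theorem \ref{thm:main-1}(iv): once a knot is known to have small TAV order, every knot whose group surjects onto it inherits the same upper bound, and conversely tables of epimorphisms between the knot groups in Rolfsen's table often make the absence of relevant quotients transparent.

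The hard part will be the scale and exhaustiveness required in direction (vi): there are roughly $250$ prime knots with at most ten crossings and several dozen candidate TAV groups, and for the larger groups the regular representation has dimension up to $200$, so Wada's determinants are of large matrices over $\mathbb{Q}[t^{\pm1}]$. Keeping the computation feasible and trustworthy depends on the reduction to irreducible constituents, on enumerating epimorphisms modulo automorphisms to avoid redundancy, and on exact arithmetic so that a computed determinant is certified genuinely zero rather than merely numerically small. No individual step is deep, but the conclusion is only as sound as the completeness of the enumerations of TAV groups, of knots, and of epimorphisms.
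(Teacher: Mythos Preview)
Your proposal is correct and matches the paper's approach: the paper itself gives no self-contained argument but refers to \cite{MS22-1} for (i)--(ii), to \cite{IMS23-1} for (iii)--(iv), says (v) is done ``by the similar way with the aid of a computer,'' and for (vi) simply asserts that the exhaustive computer check over all remaining knots and all TAV groups of order $<201$ (those in Tables 1--3) shows the relevant twisted Alexander polynomials never vanish. Your description of the computational strategy (Wada's Fox-calculus determinant, reduction to irreducible constituents, enumerating epimorphisms modulo $\mathrm{Aut}(G)$) is in fact more detailed than anything the paper spells out.
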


Actually, we showed the statements (i), (ii) in \cite[Theorem 3.2]{MS22-1}, and (iii), (iv) in \cite[Theorem 1.3]{IMS23-1}. The new statement (v) can be shown by the similar way with the aid of a computer. As for (vi), we can check that for the other knots and for any TAV groups of order less than $201$ (see Tables 1 to 3) the twisted Alexander polynomials associated with the regular representations of these TAV groups do not vanish.  

The purpose of this paper is to explore several properties of TAV groups and the TAV order function. In particular, we discuss the following fundamental questions: 

\begin{itemize}
\item[(A)]
Find a non-fibered knot $K$ admitting a given TAV group $G$.

\item[(B)]
Let $\mathcal{V}(G)$ be the set of knots which admit $G$ as a TAV group. Then, for a given TAV group $G$, research basic properties of $\mathcal{V}(G)$. 

\item[(C)]
Determine the image of the TAV order function $\ord|_{\mathcal{N}}\colon\mathcal{N}\to \N$. 
\end{itemize}

By tracking the proof of Theorem \ref{thm:main-4}, we can theoretically find such a knot in (A), but it seems excessively impractical. As an accessible way to find a concrete non-fibered knot $K$ admitting a given TAV group $G$, we can adopt the \textit{satellite knot construction}. See Subsection \ref{subsec:3.0} for precise description of the knot. 

By using Theorem \ref{thm:main-1}(iii), it is easy to see that there are infinitely many \textit{composite} knots with the TAV orders appeared in Theorem \ref{thm:main-2}(i) to (v). However it is unclear that we can find infinitely many \textit{prime} knots with these TAV orders. 
As shown in \cite[Corollary 1.4]{IMS23-1}, there are infinitely many prime knots $K\in\mathcal{N}$ with $\ord(K)=24$. As for the question (B), we can extend the result as follows: 

\begin{theorem}\label{thm:main-5}
For any TAV group $G$, there are infinitely many hyperbolic knots that admit $G$ as a TAV group. Moreover, if $G$ is realized as a smallest TAV group of a knot, then infinitely many hyperbolic knots also admit $G$ as a smallest TAV group. 
\end{theorem}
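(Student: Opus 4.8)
The plan is to combine the satellite knot construction mentioned in Subsection \ref{subsec:3.0} with a hyperbolicity criterion for satellites, and then to upgrade the conclusion using the epimorphism-monotonicity of $\ord$ from Theorem \ref{thm:main-1}(iv). Fix a TAV group $G$. By Theorem \ref{thm:main-4} we have $w(G)=1$ and $G'$ is not a $p$-group, so $G$ is the normal closure of a single element $g$ and there is a knot group epimorphism realizing $G$; concretely, the satellite construction produces a companion knot $C$, a pattern $P$ (a knot in the solid torus determined by the chosen normal generator $g$ of $G$), and a satellite knot $K_0 = P(C)$ whose group $G(K_0)$ surjects onto $G$ in such a way that the associated twisted Alexander polynomial (for the regular representation) vanishes, i.e. $K_0 \in \mathcal{V}(G)$.

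First I would recall Thurston's theorem that a satellite knot $P(C)$ is hyperbolic provided the companion $C$ is a nontrivial knot, the pattern $P$ is geometrically essential in the solid torus (not contained in a ball and not a core), and the pattern is not itself a torus knot or satellite inside the solid torus — in practice one arranges $P$ to have hyperbolic exterior in the solid torus. The point is that the pattern $P$ is determined once and for all by $G$ (it only has to carry the required meridian/epimorphism data), so I can choose $P$ once to be hyperbolic-in-the-solid-torus, and then vary the companion $C$ over the infinitely many distinct nontrivial knots (e.g. the torus knots $T(2,2k+1)$). Taking $C = C_k$ running over such a family yields infinitely many satellite knots $K_k = P(C_k)$, each admitting $G$ as a TAV group. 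To see that infinitely many of the $K_k$ are genuinely distinct, I would note that the companion of a satellite knot is a knot invariant (the companion torus is essentially unique by the JSJ/torus decomposition), so distinct companions give distinct satellites; alternatively one can separate them by knot group or hyperbolic volume. Hence $\mathcal{V}(G)$ contains infinitely many hyperbolic knots.

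For the second assertion, suppose $G$ is realized as a \emph{smallest} TAV group of some knot $K_1$, i.e. $\ord(K_1) = |G|$. The idea is to build, for each companion $C_k$ in the family above, a satellite knot $K_k$ whose group surjects onto $G(K_1)$; then Theorem \ref{thm:main-1}(iv) forces $\ord(K_k) \le \ord(K_1) = |G|$, while $K_k$ still admits $G$ as a TAV group, so $|G| \le \ord(K_k)$ and equality holds — $G$ is a smallest TAV group of $K_k$. To obtain such an epimorphism, I would take the pattern $P$ to be (a copy of) $K_1$ itself placed in the solid torus in a standard way: the meridional winding can be arranged so that capping off the companion side induces a degree-one collapse $G(P(C_k)) \twoheadrightarrow G(K_1)$ (this is the standard fact that a satellite exterior maps onto the pattern exterior by a map that kills the companion, hence onto $G(K_1)$). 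As long as the companion is nontrivial and the pattern $K_1$ sits geometrically essentially and non-torus-like in the solid torus, Thurston's criterion again gives hyperbolicity of $K_k$, and distinct companions again give infinitely many distinct knots.

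The main obstacle I anticipate is the simultaneous bookkeeping at the pattern level: one needs a single pattern $P$ in the solid torus that (a) has hyperbolic complement in the solid torus so that all the resulting satellites are hyperbolic for every nontrivial companion, (b) carries the epimorphism onto $G$ with vanishing twisted Alexander polynomial, and — for the refined statement — (c) admits a degree-one collapse onto $G(K_1)$. Reconciling (a) with (b)/(c) is the delicate point: the winding number of $P$ in the solid torus must be chosen compatibly (nonzero, or handled via the meridian map) so that the companion factors do not obstruct the epimorphism, and one must verify that inserting a hyperbolic-in-solid-torus pattern does not destroy the vanishing. I expect this to be exactly the content of Subsection \ref{subsec:3.0}, so the proof here will consist of invoking that construction, checking Thurston's hyperbolicity hypotheses, and then applying Theorem \ref{thm:main-1}(iv) together with the companion-uniqueness argument to produce the infinite family.
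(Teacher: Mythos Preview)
Your proposal has two fatal gaps, one in each half of the argument.

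\textbf{Hyperbolicity.} A satellite knot is \emph{never} hyperbolic. By definition, a (proper) satellite has an essential torus in its exterior---the companion torus---while a hyperbolic knot complement is atoroidal. The trichotomy you are half-remembering is Thurston's statement that every knot in $S^3$ is exactly one of: a torus knot, a satellite knot, or a hyperbolic knot. So no choice of pattern or companion will make $P(C_k)$ hyperbolic; at best the pattern piece $V\setminus P$ is hyperbolic as a JSJ piece. Your infinite family therefore lies entirely outside the class of hyperbolic knots, and the first assertion is not proved.

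\textbf{The lower bound on $\ord(K_k)$.} In the second part you write that ``$K_k$ still admits $G$ as a TAV group, so $|G|\le\ord(K_k)$.'' This inequality is backwards: admitting $G$ as a TAV group gives only $\ord(K_k)\le|G|$, since $\ord$ is a minimum over TAV groups. The epimorphism $G(K_k)\twoheadrightarrow G(K_1)$ likewise gives only $\ord(K_k)\le\ord(K_1)=|G|$ via Theorem~\ref{thm:main-1}(iv). You have two upper bounds and no lower bound. The genuine difficulty is ruling out the possibility that $K_k$ acquires a \emph{new} epimorphism onto some smaller group $H$ with $|H|<|G|$ that did not come from $G(K_1)$, together with a vanishing twisted Alexander polynomial. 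Nothing in your outline prevents this.

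The paper's proof avoids both problems by taking a completely different route: it writes $K$ as the closure of a pseudo-Anosov $n$-braid $b$ and sets $K_k=\widehat{b^k}$. Hyperbolicity of $K_k$ for large $k$ then follows from Thurston's hyperbolic Dehn surgery theorem applied to the mapping torus of $b$. The quotient map $G(K_k)\to G(K)$ gives $G$ as a TAV group of $K_k$. For the lower bound, the key idea is that when $k$ is coprime to every integer up to $n\cdot\ord(K)^n$, the Hurwitz action of $b$ and $b^k$ on $H^n$ have the same fixed points for any $H$ with $|H|<\ord(K)$; hence every homomorphism $G(K_k)\to H$ factors through $G(K)$, and a product formula from \cite{HLN06-1} then shows the corresponding twisted polynomial is nonzero. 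This factoring step is precisely the missing lower-bound mechanism in your argument.
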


Here, a knot $K$ in $S^3$ is \textit{hyperbolic} if the complement $S^3\setminus K$ admits the complete hyperbolic metric of finite volume. We remark that a hyperbolic knot is prime. 

At this point, we do not know how the TAV order $\ord(K)$ behaves when we restrict ourselves to special classes of knots, e.g., knots with a given bridge number, genus, braid index, crossing number, and so on. As an example, we can show the following interesting property of $\ord(K)$ for $2$-bridge knots: 

\begin{theorem}\label{thm:main-7}
The symmetric group $S_4$ is not a TAV group of any $2$-bridge knot. In other words, $\mathcal{V}(S_4)\cap \{\text{$2$-bridge knots}\}=\emptyset$. 
\end{theorem}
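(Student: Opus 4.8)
The plan is to analyze which finite groups $G$ with $w(G)=1$ and $G'$ not a $p$-group can arise as a quotient of a $2$-bridge knot group, and to show $S_4$ is excluded. Recall that a $2$-bridge knot group has a presentation $\langle a,b \mid wa = bw\rangle$ with two meridional generators; consequently any quotient $G$ of $G(K)$ is generated by \emph{two} elements $\alpha,\beta$ that are conjugate in $G$ (the images of the meridians). Moreover, since the two meridians are conjugate, $G$ is the normal closure of a single element (either one of $\alpha,\beta$), which is consistent with $w(G)=1$. So the real constraint I would exploit is: $G$ is generated by two conjugate elements. For $G=S_4$, I would first ask which conjugacy class these two generators lie in. A single transposition generates a proper subgroup; a product of two disjoint transpositions has order $2$ and the subgroup generated by two such is contained in a Sylow $2$-subgroup or $A_4$; and a $4$-cycle together with a conjugate $4$-cycle — here I would check whether two $4$-cycles can generate $S_4$ at all, and if so, whether the resulting Wirtinger-type relation is compatible with the single defining relator of a $2$-bridge group. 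The cleanest route is to enumerate, up to automorphism of $S_4$, all pairs $(\alpha,\beta)$ of conjugate elements generating $S_4$, and for each such pair determine whether there is an epimorphism $G(K)\to S_4$ sending the two meridians to $\alpha,\beta$.

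The key technical step is the following: for a $2$-bridge knot $K=\mathfrak{b}(p,q)$, an epimorphism from $G(K)$ onto $G$ sending meridians to a conjugate pair $(\alpha,\beta)$ exists if and only if the pair $(\alpha,\beta)$ satisfies the relation coming from the continued-fraction word $w=w(p,q)$, i.e. $w\alpha = \beta w$ where $w$ is the specific alternating product of $\alpha$'s and $\beta$'s (or $\alpha^{\pm1},\beta^{\pm1}$) determined by $(p,q)$. I would package this as: representations of $2$-bridge knot groups into $G$ with meridians mapping to a fixed conjugacy class correspond to "trace-like" solutions, and for a \emph{finite} target the set of such solutions is governed by the Riley-type polynomial. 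For $S_4$, I would compute directly: put $\alpha,\beta$ into the subgroup they generate and check, for each admissible conjugate pair, that the only words $w$ with $w\alpha=\beta w$ force $w$ to lie in a proper subgroup or force $\alpha=\beta$ (hence a non-surjective or cyclic image). The upshot I expect is that in every case the relation $w\alpha = \beta w$ forces either $\langle\alpha,\beta\rangle\neq S_4$ or makes the knot fibered-type obstruction irrelevant — more precisely, that no $2$-bridge relator word can realize such a pair.

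An alternative, possibly slicker, approach uses the metabelian structure: $S_4' = A_4$, and $A_4' = V_4 \cong (\Z/2)^2$, so $S_4$ is solvable of derived length $3$. For a $2$-bridge knot, the metabelianization of $G(K)$ is $\Z\ltimes (\Z[t^{1/2}]/\Delta_K(t))$ with $\Delta_K$ the Alexander polynomial, and any solvable quotient factors through a "higher metabelian" quotient controlled by the Alexander module and its successive extensions. I would show that a surjection $G(K)\to S_4$ would induce a surjection onto $S_4/V_4 \cong S_3$, pulling back to a $3$-colouring / dihedral quotient, which for $2$-bridge knots means $3 \mid \Delta_K(-1) = \det(K)$; then the further lift to $S_4$ imposes that the $V_4$-extension be realized inside the next layer of the lower central/derived series of the $2$-bridge group, and I would argue this layer is always "too abelian" — the relevant module is cyclic over $\Z[\Z/3]$, forcing the putative $V_4$ on which $S_3$ acts to have a too-small, non-surjective image. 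The hard part will be making this last module-theoretic obstruction precise and airtight: one must rule out \emph{all} surjections, not merely the obvious ones, so I would need a clean statement that every quotient of a $2$-bridge knot group in which the commutator subgroup is finite is "essentially metabelian-by-small", and verify $S_4$ violates it. I expect this obstruction — controlling the second derived layer uniformly over all $2$-bridge knots — to be the main obstacle, and I would fall back on the explicit finite computation with the conjugate-generating-pair characterization if the general module argument proves unwieldy.
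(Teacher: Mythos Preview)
Your proposal has a fundamental gap: you are trying to prove that no $2$-bridge knot group surjects onto $S_4$, but this is simply false. The trefoil $3_1$ is a $2$-bridge knot with presentation $G(3_1)=\langle a,b\mid aba=bab\rangle$, and the assignment $a\mapsto(1234)$, $b\mapsto(1243)$ defines an epimorphism onto $S_4$: one checks directly that $(1234)(1243)(1234)=(34)=(1243)(1234)(1243)$, and $\langle(1234),(1243)\rangle=S_4$ since the subgroup contains a $4$-cycle and the $3$-cycle $(1234)(1243)^{-1}$. So neither your conjugate-pair enumeration nor your derived-series/module obstruction can succeed, because the conclusion you are aiming for is not true. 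The statement to be proved is \emph{not} that $S_4$ fails to be a quotient; it is that $S_4$ fails to be a \emph{TAV} group, i.e.\ that even when an epimorphism $f\colon G(K)\to S_4$ exists, the twisted Alexander polynomial $\Delta_K^{\rho\circ f}(t)$ associated with the regular representation is nonzero.

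The paper's proof takes exactly this route. It decomposes the regular representation of $S_4$ into irreducibles (two $1$-dimensional, one $2$-dimensional lifted from $S_3$, two $3$-dimensional) and shows nonvanishing for each factor. The $1$- and $2$-dimensional pieces are handled by earlier general results. For the $3$-dimensional standard representation $\tau$, the key idea is a mod~$2$ reduction: after a change of basis, $\tau$ reduces modulo $2$ to a block-triangular form whose $2\times 2$ block is (conjugate to) the $2$-dimensional representation $\tau'\colon S_4\to\mathrm{GL}(2,\Z/2\Z)$ factoring through $S_3$. One then computes the Fox derivative $\Psi\bigl((1-v)\tfrac{\partial w}{\partial v}-1\bigr)$ in $\mathrm{M}(2,\Z/2\Z)[t^{\pm1}]$ and observes that the nonzero $\Z/2\Z$-linear sums of elements of $\tau'(A_4)$ all lie in $\tau'(A_4)$ (hence are invertible), forcing the leading or trailing coefficient to have nonzero determinant. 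This yields $\Delta_K^{\tau\circ f}(t)\not\equiv 0\pmod 2$, hence $\neq 0$. The argument is uniform over all $2$-bridge relator words $w$ and never attempts to obstruct the existence of $f$.
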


In particular, since the symmetric group $S_4$ is the smallest TAV group, $\ord(K)>24$ holds for every $2$-bridge knot $K$ (see Corollary \ref{cor:S_4}). 
Moreover, $S_4$ can be a TAV group of a non-fibered knot with any bridge number greater than two, the TAV order $\ord(K)$ allows us to distinguish the class of $2$-bridge knots from the other classes of knots (see Remark \ref{rmk:S_4}). 

Next, let us see the question (C); i.e. the realization problem of the TAV order $\ord(K)$.  In principle, we can make a list of the TAV groups and its orders using Theorem \ref{thm:main-4}.  However, it is not so easy to clarify whether a given order is realized as the TAV order of a knot. Namely, the determination of the image of the TAV order function $\ord|_{\mathcal{N}}\colon \mathcal{N}\to \N$ is an important question in our research. 
As a partial answer to the question, we can show the following: 

\begin{theorem}\label{thm:main-6}
Let $G$ be a TAV group of order $pqr$, where $p,q,r$ are primes.
Then there exist infinitely many hyperbolic knots that admit $G$ as a smallest TAV group;
in particular, there exist infinitely many hyperbolic knots $K$ with $\ord(K) = pqr$.
\end{theorem}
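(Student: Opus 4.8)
The plan is to reduce the statement to the second assertion of Theorem~\ref{thm:main-5}: it suffices to produce one non-fibered knot $K_0$ admitting $G$ as a \emph{smallest} TAV group, that is, with $\ord(K_0)=|G|=pqr$. Indeed, Theorem~\ref{thm:main-5} then upgrades this automatically to infinitely many hyperbolic knots with $G$ as a smallest TAV group, and the clause $\ord(K)=pqr$ follows at once. Since any knot $K$ admitting $G$ as a TAV group already satisfies $\ord(K)\le|G|=pqr$, what must really be arranged is a knot $K_0$ admitting $G$ as a TAV group and admitting \emph{no} TAV group of order strictly less than $pqr$.

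The hypothesis $|G|=pqr$ enters through the following structural fact, which I would record first. Because $G$ is a TAV group we have $w(G)=1$ and $G'$ is not a $p$-group; and since a group of order $pqr$ is solvable, $G\neq G'$, so $G/G'$ is a nontrivial cyclic group of order dividing $pqr$ while $|G'|=|G|/|G/G'|$ is divisible by at least two distinct primes. Comparing orders forces $G/G'$ to have prime order and $|G'|$ to be a product of two distinct primes. As a consequence $G$ has \emph{no proper quotient which is a TAV group}: if $Q=G/N$ with $N\neq\{e\}$ were one, then $Q'=G'N/N$ would be a non-$p$-group whose order divides $|G'|$, hence $|Q'|=|G'|$ and $G'\cap N=\{e\}$; then $N$ embeds into the prime-order group $G/G'$, so $|Q|=|G|/|N|=|G'|=|Q'|$, making $Q$ a nontrivial perfect solvable group --- impossible. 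Hence it is enough to build $K_0$ admitting $G$ as a TAV group such that every finite quotient of $G(K_0)$ of order $<pqr$ is either abelian or factors through the canonical epimorphism $G(K_0)\twoheadrightarrow G$.

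To this end I would run the satellite construction of Subsection~\ref{subsec:3.0} for $G$ with the companion chosen to be a torus knot $T(a,b)$, where $a,b$ are distinct primes larger than $pqr$ (the pattern being the non-fibered one that encodes a weight-one normal generator of $G$, so that $K_0$ is non-fibered and the resulting epimorphism $G(K_0)\twoheadrightarrow G$ realizes $G$ as a TAV group; thus $\ord(K_0)\le pqr$). The role of the torus-knot companion is that every finite quotient of $G(T(a,b))=\langle x,y\mid x^a=y^b\rangle$ of order $<pqr$ is cyclic: if $\phi\colon G(T(a,b))\twoheadrightarrow H$ with $|H|<pqr$, then $z:=\phi(x)^a=\phi(y)^b$ is central in $H$, and since $a,b>|H|$ the maps $g\mapsto g^a$ and $g\mapsto g^b$ are bijections of $H$, so $\phi(x)$ and $\phi(y)$ are the unique roots of $z$ of the respective exponents and are therefore central, whence $H=\langle\phi(x),\phi(y)\rangle$ is abelian, hence cyclic. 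Writing $G(K_0)$ as the amalgam of the pattern-complement group and $G(T(a,b))$ along the peripheral torus, the image of the companion vertex group in any quotient of order $<pqr$ is accordingly cyclic. The main obstacle is the one remaining step: to verify that the pattern of Subsection~\ref{subsec:3.0} can be arranged so that its own contribution to any small non-abelian quotient of $G(K_0)$ is entirely governed by $G$, forcing such a quotient to factor through $G(K_0)\twoheadrightarrow G$. Granting this, the structural fact of the second paragraph gives $\ord(K_0)=pqr$, so $G$ is a smallest TAV group of $K_0$, and Theorem~\ref{thm:main-5} completes the proof.
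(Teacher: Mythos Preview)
Your proposal has a genuine gap, which you yourself flag (``The main obstacle is the one remaining step \dots\ Granting this \dots''). But the problem is more serious than just an unfinished verification: with your choices the construction does not get off the ground, and the missing step cannot be completed without essentially assuming the conclusion.

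First, taking $J=T(a,b)$ with primes $a,b>pqr$ defeats the mechanism of Subsection~\ref{subsec:3.0}. For a TAV group $G$ of order $pqr$ one has $G'\cong C_{qr}$ cyclic, so you are in case~(i) and Theorem~\ref{const-thm} requires $\Delta_J(e^{2\pi i/qr})=0$. The roots of $\Delta_{T(a,b)}$ are precisely the primitive $ab$-th roots of unity, and $ab>qr$, so this fails. Thus the satellite construction with your $J$ does \emph{not} by itself make $G$ a TAV group of $K_0$. The only way to rescue ``$G$ is a TAV group of $K_0$'' is to choose the pattern $K$ so that $G$ is already a TAV group of $K$ and then invoke Proposition~\ref{const-prop-1} (the factor $\Delta_K^{\rho\circ f_0}(t)=0$). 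But then any \emph{smaller} TAV group of $K$ lifts to $K_0$ through $\psi$ by the same proposition, so $\ord(K_0)\le\ord(K)$. Your argument would therefore need $\ord(K)=pqr$ to begin with, which is circular. This is exactly why your ``remaining step'' cannot be carried out for a generic pattern: the pattern's own small TAV groups are invisible to the companion and survive in $K_0$.

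The paper sidesteps both difficulties by making the opposite choices. It takes the pattern $K=T(p,qr)$ to be \emph{fibered}, so $\Delta_K^{\rho\circ g_0}(t)\neq 0$ for every $g_0$ and the pattern contributes no TAV groups at all; the vanishing is produced entirely by the companion $J=T(q,r)$, whose Alexander polynomial does vanish at $e^{2\pi i/qr}$. To rule out smaller TAV groups, the paper proves a purely group-theoretic lemma (Lemma~\ref{lem:qr-grp}) showing that any small homomorphic image of $G(T(q,r))$ is metabelian (indeed metacyclic of order $qr$ or cyclic), and then uses Proposition~\ref{const-prop-1} together with the fiberedness of $K$ and the specific root structure of $\Delta_{T(q,r)}$ to conclude. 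Your neat observation that small quotients of $G(T(a,b))$ are cyclic when $a,b$ are large primes is in the same spirit as this lemma, but it is placed on the wrong side of the satellite: control of the companion cannot compensate for an uncontrolled pattern.
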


We note that the dihedral group $D_{qr}$ of order $2qr$ is a typical example of Theorem \ref{thm:main-6}.  Using this, we can answer one of the problems proposed in \cite[Section 5]{IMS23-1} (see Corollary \ref{cor:D_15}). 

Regarding the question (C), 
a relationship between twisted Alexander polynomials for a group extension is 
useful for studying both TAV groups and TAV orders. Let us consider a central extension 
$1\rightarrow C_n \rightarrow G_{k,n} \overset{\pr}{\longrightarrow} G_{k,1} \rightarrow 1$ by a finite cyclic group $C_n$, 
where $G_{k,n}$ is defined as the pull-back of two epimorphisms $\pi\colon G_{k,1}\to C_k$; the abelianization homomorphism (hencce the weight of $G_{k,n}$ is one), and $\pi_k\colon C_{kn}\to C_k$; a natural projection, where $k$ and $n$ are positive integers (see Section \ref{sec:extension} for details). 
We denote the regular representation of the group $G_{k,n}$ by $\rho_n\colon G_{k,n} \to\mathrm{GL}(kn|H|,\C)$, where $H$ is the kernel of $\pi$. 
Let $K$ be a knot and assume that there exists an epimorphism $f_n\colon G(K) \to G_{k,n}$. 
Then we have the following formula for the twisted Alexander polynomial associated with $\rho_n\circ f_n$: 

\begin{theorem}\label{thm:main-9}
$\displaystyle{
\D_K^{\rho_n\circ f_n}(t)
=
\prod_{l=0}^{n-1}
\D_K^{\rho_1\circ \pr \circ f_n}\big(e^{\frac{2l\pi i}{kn}}t\big).}$
\end{theorem}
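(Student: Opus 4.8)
The plan is to exploit the block structure that the central extension $1\to C_n\to G_{k,n}\xrightarrow{\pr} G_{k,1}\to 1$ imposes on the regular representation $\rho_n$. Since $C_n$ is central in $G_{k,n}$, the group algebra $\C[G_{k,n}]$ decomposes according to the characters of $C_n$: for each $n$-th root of unity $\zeta = e^{2l\pi i/n}$ ($0\le l\le n-1$), one obtains a summand on which the central $C_n$ acts by the scalar $\zeta$. Concretely, the regular representation $\rho_n$ of $G_{k,n}$ is isomorphic to $\bigoplus_{l=0}^{n-1}\sigma_l$, where each $\sigma_l$ is the representation of $G_{k,n}$ induced from the character $\zeta_l\colon C_n\to\C^\times$ via the splitting at the level of sets; each $\sigma_l$ factors through a projective-looking twist of the regular representation $\rho_1$ of $G_{k,1}=G_{k,n}/C_n$. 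I would first make this decomposition explicit and identify each $\sigma_l$ with $\rho_1\circ\pr$ twisted by the one-dimensional character obtained by composing $\pr$-compatible data with $\zeta_l$.

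Next I would feed this into the multiplicativity of the twisted Alexander polynomial under direct sums of representations: for a knot group $G(K)$, if $\rho = \bigoplus_l \sigma_l$ then $\D_K^{(\rho)\circ f}(t) = \prod_l \D_K^{\sigma_l\circ f}(t)$, since Wada's invariant (a quotient of determinants of Fox Jacobians) is multiplicative on block-diagonal representations. Thus
\[
\D_K^{\rho_n\circ f_n}(t) \;=\; \prod_{l=0}^{n-1}\D_K^{\sigma_l\circ f_n}(t).
\]
The remaining point is to recognize each factor $\D_K^{\sigma_l\circ f_n}(t)$ as a specialization of $\D_K^{\rho_1\circ\pr\circ f_n}(t)$. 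Here the key observation is that twisting a representation by a one-dimensional character $\chi$ of $G(K)$ that factors through the abelianization $G(K)^{\mathrm{ab}}\cong\Z$ — sending the meridian to $e^{2l\pi i /(kn)}$ — has the effect of substituting $e^{2l\pi i/(kn)}t$ for the variable $t$ in the twisted Alexander polynomial. This is because the variable $t$ and the abelian character enter the Fox-calculus matrices in the same multiplicative way; one checks it on the level of the twisted chain complex, where tensoring by $\chi$ rescales the deck-transformation action by the character value. Combining, $\D_K^{\sigma_l\circ f_n}(t) = \D_K^{\rho_1\circ\pr\circ f_n}(e^{2l\pi i/(kn)}t)$, and the product formula follows.

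The main obstacle I anticipate is bookkeeping the interaction between the variable $t$ (which records the $\Z$-cover of the knot exterior, i.e., the abelianization of $G(K)$) and the internal cyclic quotient $C_{kn}\to C_k$ built into the definition of $G_{k,n}$: one must verify that the character of $C_n$ indexing the $l$-th block, pulled back to $G(K)$ through $f_n$ and through the maps $\pi$, $\pi_k$ defining the pull-back, is exactly the abelian character $\mu\mapsto e^{2l\pi i/(kn)}$ on the meridian, and in particular that the denominator $kn$ (not $n$) is correct. This requires carefully unwinding the pull-back square defining $G_{k,n}$ and the compatibility of $f_n$ with the abelianization of $G(K)$; getting the root of unity $e^{2l\pi i/(kn)}$ rather than $e^{2l\pi i/n}$ right is the whole content of the statement and is where the definition in Section \ref{sec:extension} must be used. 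Once the normalization of the character is pinned down, the rest is the two standard facts above (multiplicativity under $\oplus$, and the variable-shift under an abelian twist), applied to a knot group so that Wada's invariant is genuinely a polynomial up to units and the identity is unambiguous.
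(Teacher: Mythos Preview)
Your proposal is correct and follows essentially the same route as the paper. The paper makes the decomposition explicit by defining one-dimensional characters $\tau_l=\eta_l\circ\pr_2$ of $G_{k,n}$ (where $\pr_2\colon G_{k,n}\to C_{kn}$ is the abelianization and $\eta_l(x)=e^{2l\pi i/(kn)}$), sets $\tau_{n,l}=\tau_l\otimes\tilde\rho_1$, and verifies $\rho_n\cong\bigoplus_{l=0}^{n-1}\tau_{n,l}$ by a direct character computation; composing with $f_n$ and using multiplicativity together with the abelian-twist variable shift then gives the formula. By routing the characters through the abelianization $C_{kn}$ of $G_{k,n}$ rather than through the central $C_n$, the paper builds the denominator $kn$ into the definition of $\tau_l$ from the outset, which is exactly the bookkeeping obstacle you anticipated; your isotypic decomposition under the central $C_n$ produces the same summands, and once you identify $\sigma_l$ with $\tau_l\otimes\tilde\rho_1$ the two arguments coincide.
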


Using the above formula, we can obtain some important information on the TAV order function $\ord|_{\mathcal{N}}\colon \mathcal{N}\to \N$ (see Example \ref{ex:dic33}). That is, there exists a positive integer which is the order of a TAV group, but never realize the TAV order of any non-fibered knot. In other words, the set of the orders of the TAV groups differs from $\mathrm{Im}\,\ord|_{\mathcal{N}}$, which gives a first step to settle the question (C). 

This paper is organized as follows. 
In Section \ref{sec:2}, 
we quickly recall the definition and one of vanishing criteria of twisted Alexander polynomials, and provide several examples of both TAV groups and non-TAV groups. 
Moreover, we explain a basic property of TAV groups under taking their quotient groups.  
In Section \ref{sec:4}, we construct a knot admitting a given TAV group, and prove Theorems \ref{thm:main-5} and \ref{thm:main-7}. 
In Section \ref{section:pqr}, we give a proof of Theorem \ref{thm:main-6} and discuss the uniqueness of the smallest TAV group for a given non-fibered knot. 
In Section \ref{sec:extension}, we provide a formula for the twisted Alexander polynomial for a central extension of a finite group (not necessarily TAV group), and show the existence of TAV groups whose orders cannot realize TAV orders of any knots. In Section \ref{sec:5}, we discuss some questions on TAV groups and their numerical invariants, and examine a relationship between the TAV order and the invariant coming from the circle valued Morse function (the Morse-Novikov number). Finally, in the appendix of this paper, we provide a list of the TAV groups of order less than $201$. 

\section{Preliminaries}\label{sec:2}

\subsection{Twisted Alexander polynomials}\label{subsec:2.1}

Let $X$ be a connected finite CW complex, 
$\phi\in H^1(X;\Z)=\mathrm{Hom}(\pi_1(X),\Z)$, 
and $\rho\colon\pi_1(X)\to \mathrm{GL}(n, R)$ a homomorphism to a general linear group over a Noetherian unique factorization domain $R$. 
Define a right $\Z[\pi_1(X)]$-module structure on 
$R^n\otimes_\Z\Z[t^{\pm1}]=R[t^{\pm1}]^n$ as follows: 
$$
(v\otimes p)\cdot g=(v\cdot \rho(g))\otimes(p\cdot t^{\phi(g)}),
$$
where 
$g\in \pi_1(X)$ and $v\otimes p\in R^n\otimes_\Z\Z[t^{\pm1}]$. 
Here, we view $R^n$ as row vectors. 
Taking tensor product, we obtain a homomorphism 
$\rho\otimes\phi\colon\pi_1(X)\to \mathrm{GL}(n,R[t^{\pm1}])$. 

We denote by $\tilde{X}$ the universal covering of $X$, 
and use the homomorphism 
$\rho\otimes\phi$ to regard 
$R[t^{\pm1}]^n$ as a right $\Z[\pi_1(X)]$-module. 
The chain complex $C_*(\tilde{X})$ is a left 
$\Z[\pi_1(X)]$-module via deck transformations. 
We can therefore consider the tensor products
$$
C_*(X;R[t^{\pm1}]^n)
:=R[t^{\pm1}]^n\otimes_{\Z[\pi_1(X)]}C_*(\tilde{X}),
$$ 
which form a chain complex of $R[t^{\pm1}]$-modules. 
We then consider the $R[t^{\pm1}]$-modules 
$H_*(X;R[t^{\pm1}]^n)
:=H_*(C_*(X;R[t^{\pm1}]^n))$. 

Since $X$ is compact and $R[t^{\pm1}]$ is Noetherian, 
these modules are finitely presented over $R[t^{\pm1}]$. 
We then define the \textit{twisted Alexander polynomial} of $(X,\phi,\rho)$ 
to be the order of $H_1(X;R[t^{\pm1}]^n)$ 
as a left $R[t^{\pm1}]$-module. 
We will denote it as $\D_{X,\phi}^\rho(t)\in R[t^{\pm1}]$, 
and note that $\D_{X,\phi}^\rho(t)$ is well defined up to multiplication by a unit in $R[t^{\pm1}]$. See \cite{FV11-1} for other basic properties of twisted Alexander polynomials. 

For a homomorphism $f \colon \pi_1(X) \to G$ to a finite group $G$, we get the representation
$$
\pi_1(X)\overset{f}{\longrightarrow} G
\overset{\rho}{\longrightarrow} \mathrm{Aut}_\Z(\Z[G]),
$$ 
where the second map is given by the right multiplication. 
We can also identify $\mathrm{Aut}_\Z(\Z[G])$ with 
$\mathrm{GL}(|G|,\Z)$, and 
obtain the corresponding twisted Alexander polynomial $\Delta_{X, \phi}^{\rho \circ f}(t)$. Then, the vanishing of $\Delta_{X, \phi}^{\rho \circ f}(t)$ is characterized as follows:

\begin{theorem}[{\cite[Theorem~4.2]{IMS23-1}}]\label{lifting-thm}
The twisted Alexander polynomial $\Delta^{\rho \circ f}_{X, \phi}(t)$ is zero if and only if there exists a nontrivial lift $\tilde{f} \colon \pi_1(X) \to \mathbb{Z}[G \times \mathbb{Z}] \rtimes (G \times \mathbb{Z})$, where $G \times \mathbb{Z}$ acts on $\mathbb{Z}[G \times \mathbb{Z}]$ by the left multiplication, of the homomorphism $f \times \phi \colon \pi_1(X) \to G \times \mathbb{Z}$, i.e., a group homomorphism $\tilde{f}$ such that $p_{G \times \mathbb{Z}} \circ \tilde{f} = f \times \phi$ and ${\rm Im}\; \tilde{f} \cap (\mathbb{Z}[G \times \mathbb{Z}] \times \{(e, 0)\}) \neq \{(0; e, 0)\}$, where $p_{G \times \mathbb{Z}} \colon \mathbb{Z}[G \times \mathbb{Z}] \rtimes (G \times \mathbb{Z}) \to G \times \mathbb{Z}$ is the projection.
\end{theorem}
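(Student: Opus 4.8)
The plan is to reduce the vanishing of $\D^{\rho\circ f}_{X,\phi}(t)$ to a statement about ranks, and then to reinterpret that rank statement cohomologically so that the required lift appears as a $1$-cocycle. Write $A=\Z[G\times\Z]$; this is both the coefficient module defining $\D$ (acted on by $\pi_1(X)$ on the right through $f\times\phi$) and, after the antipode $g\mapsto g^{-1}$, $t\mapsto t^{-1}$, the left $\pi_1(X)$-module into whose affine group the lift $\tilde f$ maps. Put $Q=\mathrm{Im}(f\times\phi)\le G\times\Z$ and $N=\ker(f\times\phi)$. Note that $A$ is free of rank $|G|$ over $\Z[t^{\pm1}]$, so the twisted chain complex $C_*(X;A)=A\otimes_{\Z[\pi_1(X)]}C_*(\tilde{X})$ is a finite complex of free $\Z[t^{\pm1}]$-modules whose $H_1$ has order $\D^{\rho\circ f}_{X,\phi}(t)$.

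\emph{Step 1 (vanishing equals positive rank).} Since $\Z[t^{\pm1}]$ is a Noetherian UFD, the order of a finitely generated module is zero exactly when the module is not torsion; hence $\D=0$ iff $H_1(X;A)$ has positive $\Z[t^{\pm1}]$-rank. For a finite complex of free modules over a domain, homology and cohomology have equal generic ranks (tensor with $\Q(t)$ and dualize), and the dual complex computes $H^1(\pi_1(X);A^\vee)$ for the contragredient $A^\vee$; via the antipode $A^\vee\cong A$ as $\pi_1(X)$-modules. Thus $\D=0$ iff $H^1(\pi_1(X);A)$ has positive $\Z[t^{\pm1}]$-rank.

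\emph{Step 2 (lifts as cocycles, and matching the conditions).} Writing $\tilde f(\gamma)=(c(\gamma);(f\times\phi)(\gamma))$ and reading the crossed-homomorphism identity off the semidirect-product law, a lift $\tilde f$ with $p_{G\times\Z}\circ\tilde f=f\times\phi$ is exactly a $1$-cocycle $c\in Z^1(\pi_1(X);A)$; moreover $c|_N$ depends only on $[c]$, so $c\mapsto c|_N$ is the restriction map $\mathrm{res}\colon H^1(\pi_1(X);A)\to\mathrm{Hom}(N,A)^Q$ of the inflation--restriction sequence, and the lift is nontrivial precisely when $\mathrm{res}([c])=c|_N\neq0$. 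It remains to prove $\mathrm{res}\neq0$ iff $H^1(\pi_1(X);A)$ has positive rank. The kernel of $\mathrm{res}$ is $\mathrm{inf}\,H^1(Q;A)$; since both projections of $f\times\phi$ are onto, $Q$ is virtually $\Z$ (two-ended) and $A|_Q\cong\Z[Q]^{\oplus r}$ with $r=[G\times\Z:Q]<\infty$, so $H^1(Q;A)\cong H^1(Q;\Z[Q])^{\oplus r}$ is finitely generated over $\Z$, hence $\Z[t^{\pm1}]$-torsion; therefore positive rank forces $\mathrm{res}\neq0$. Conversely, if $\psi=c|_N\neq0$ then, because $A$ is $\Z[t^{\pm1}]$-torsion-free and the central $t$ acts freely, the classes $\{t^k[c]\}_{k\in\Z}$ restrict to the $\Z$-independent family $\{t^k\psi\}$, so $[c]$ spans a free rank-one submodule and $H^1$ has positive rank. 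Combining Steps 1--2 gives the theorem.

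The main obstacle I expect is the last matching in Step 2: certifying that the purely combinatorial nontriviality of the lift (a cocycle not vanishing on $\ker(f\times\phi)$) is equivalent to the rank condition that forces $\D=0$. This requires the two-ended structure of $Q$ to show the inflation part is $\Z[t^{\pm1}]$-torsion, together with the torsion-freeness of $A$ over $\Z[t^{\pm1}]$ to promote a single nonzero restriction to positive rank. A secondary point of care, which I regard as bookkeeping rather than a genuine difficulty, is tracking the left-versus-right conventions and the contragredient $A^\vee$ in Step 1, all of which are reconciled by the group-ring antipode.
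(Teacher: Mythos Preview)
This theorem is quoted from the authors' earlier paper \cite{IMS23-1} and is not proved in the present paper, so there is no proof here to compare against. That said, your argument stands on its own and is essentially correct.

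A few remarks on the details you flagged. In Step~1 the passage from $H_1(X;A)$ to $H^1(\pi_1(X);A)$ is valid: over the fraction field $\F=\Q(t)$ one has $\dim_\F H_1(X;A\otimes\F)=\dim_\F H^1(X;(A\otimes\F)^*)$ by tensor--hom adjunction applied to the cellular chain complex, and the contragredient $(A\otimes\F)^*$ is isomorphic to $A\otimes\F$ with the left-multiplication $\pi_1(X)$-action (the explicit intertwiner is $e_g^*\mapsto e_{g^{-1}}$). Since the twisted cochain complex is a finite complex of free $\Z[t^{\pm1}]$-modules, tensoring with $\F$ commutes with cohomology, and $H^1(X;-)\cong H^1(\pi_1(X);-)$ in degree~$1$ for any connected CW complex; so the rank transfer goes through exactly as you say.

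In Step~2 you assume both $f$ and $\phi$ are surjective, which the theorem as stated does not. This is harmless: if $\phi\ne 0$ then $Q=\mathrm{Im}(f\times\phi)$ has finite index in $G\times\Z$ regardless of whether $f$ is onto, so your computation of $H^1(Q;A)$ applies; and if $\phi=0$ then $Q$ is finite and $H^1(Q;A)$ is annihilated by $|Q|$, hence is still $\Z[t^{\pm1}]$-torsion. The remainder of Step~2 --- the inflation--restriction sequence to isolate $c|_N$, and the torsion-freeness of $A$ over $\Z[t^{\pm1}]$ to promote a single nonzero restriction to a free rank-one submodule --- is correct as written.
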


In this paper, we consider a knot $K$ in the $3$-sphere $S^3$ 
and let $E_K=S^3\setminus \nu(K)$, where $\nu(K)$ denotes an open tubular neighborhood of $K$. 
We denote $\pi_1(E_K)$ by $G(K)$, 
and call it the \textit{knot group} of $K$. 
If $f\colon\G\to G$ is an epimorphism to a finite group $G$, 
we obtain the twisted Alexander polynomial 
$\D_{E_K,\phi}^{\rho\circ f}(t)$. 
For the \textit{abelianization} homomorphism 
$\phi \colon G(K)\to H_1(E_K;\Z)\cong\Z$, 
we drop $\phi$ from the notation and use $\D_K^{\rho\circ f}(t)$ for simplicity. 

\begin{remark}\label{rmk:torsion}
It is known that 
$\D_{K}^{\rho\circ f}(t)\not=0$ if and only if 
$H_1(E_K;\Q[G][t^{\pm1}])=H_1(E_K;\Q[t^{\pm1}]^{|G|})$ is $\Q[t^{\pm1}]$-torsion, 
namely, 
$\mathrm{rank}_\Z\,H_1(E_K;\Z[t^{\pm1}]^{|G|})$ is finite 
(see \cite[Remark~4.5]{Turaev01-1}). 
\end{remark}

\subsection{Twisted Alexander vanishing groups}\label{subsec:2.2}

For reader's convenience, we exhibit here some examples of both TAV groups and non-TAV groups. To this end, let us recall the definition of a TAV group of a knot from the introduction. We call a finite group $G$ a \textit{twisted Alexander vanishing (TAV) group of a knot $K$} in $S^3$ if there exists an epimorphism $f$ of the knot group $G(K)$ onto $G$ such that the twisted Alexander polynomial associated with  the composition of $f$ and the regular representation of $G$ is zero. By abuse of terminology, we also simply call a finite group $G$ a TAV group if $G$ is a TAV group of some knot $K$. 

In view of Theorem \ref{thm:main-4}, for a finite group $G$ such that $w(G)=1$ and the commutator subgroup $G'$ is not a $p$-group, there exist a non-fibered knot $K$ and an epimorphism $f \colon G(K) \to G$ such that the corresponding twisted Alexander polynomial $\D_{K}^{\rho\circ f}(t)$ is zero. 

First of all, a finite \textit{abelian group} $G$ of weight one is not a TAV group, because it
is cyclic, i.e. $G=C_n$. In this case, for an epimorphism $f\colon G(K)\to G$, we have $\D_K^{\rho\circ f}(t)=\prod_{j=1}^{n}\D_K(\alpha^jt)$, where $\alpha\in \C$ is a primitive $n$-th root of unity and $\D_K(t)$ is the Alexander polynomial of $K$ (see \cite[Example 2.6]{IMS23-1}). 

Next, as mentioned in the introduction, every \textit{$p$-group} is not a TAV group. Since a finite $p$-group $G$ is nilpotent, $G$ is cyclic when $w(G)=1$. Hence, $G$ is not a TAV group. 

On the other hand, the following three kinds of groups are TAV groups: 

\begin{itemize}
\item
The \textit{alternating group} $A_n~(n\geq5)$. 

\item
The \textit{symmetric group} $S_n=A_n\rtimes C_2~(n\geq4)$.

\item
The \textit{dihedral group} $D_n=C_n\rtimes C_2$ of order $2n$, where $n$ is odd and not a power of an odd prime number. 
\end{itemize}

For $n\geq2$, the \textit{dicyclic group} $\mathrm{Dic}_n$ of order $4n$ is defined by the presentation: 
$$
\mathrm{Dic}_n=
\la
x,y\,|\,x^{2n},y^2x^{-n},yxy^{-1}x
\ra,
$$
which can be considered as an extension of $C_2$ by $C_{2n}$. Namely, that group fits the following short exact sequence: 
$$
1\to C_{2n}\to \mathrm{Dic}_n\to C_2\to1.
$$
Moreover, $\mathrm{Dic}_n$ is often called the \textit{binary dihedral group}, because $\mathrm{Dic}_n/\la y^2\ra$ is isomorphic to $D_n$. If $n$ is even, $\mathrm{Dic}_n$ is not weight one, and hence not a TAV group. Then, we can show the following: 

\begin{itemize}
\item
The \textit{dicyclic group} $\mathrm{Dic}_n$ of order $4n$, where 
$n$ is odd and not a power of an odd prime number, is a TAV group. 
\end{itemize}

The following characterization of the weight of a finite or soluble group is known in group theory (see \cite[Corollaries 5.17, 5.18]{Chiodo14-1}). 

\begin{proposition}\label{pro:weight}
\begin{itemize}
\item[(i)]
Let $n>1$ be an integer and let $G$ be a finite or soluble group. Then $w(G)=n$ if and only if $w(G/G') = n$, and $w(G) \leq 1$ if and only if 
$w (G/G') \leq 1$. 
\item[(ii)]
Nontrivial finite (or soluble) perfect groups have weight one. 
\end{itemize}
\end{proposition}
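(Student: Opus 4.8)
The plan is to prove Proposition~\ref{pro:weight}, which is cited from \cite[Corollaries~5.17, 5.18]{Chiodo14-1}, by reducing everything to the abelianization and then handling the perfect case separately. For part (i), the key observation is that the normal closure operation is compatible with the quotient by $G'$: if $G$ is the normal closure of $g_1,\dots,g_n$, then $G/G'$ is generated (as an abelian group, so normal closure equals ordinary generation) by the images $\bar g_1,\dots,\bar g_n$, which gives $w(G/G')\le w(G)$ for free and with no hypothesis on $G$. The content is the reverse inequality, $w(G)\le w(G/G')$, and this is exactly where finiteness or solubility is needed. So the first step I would carry out is to set up this easy direction and isolate the hard direction as the thing to prove.

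For the hard direction I would argue as follows. Suppose $\bar g_1,\dots,\bar g_m$ generate $G/G'$ with $m=w(G/G')$; lift them to $g_1,\dots,g_m\in G$ and let $N$ be their normal closure in $G$. Then $NG'=G$, so $G/N$ is a quotient of $G/G'$ and in particular $G/N$ is abelian; but $G/N$ is also a quotient of $G$, hence finite (resp. soluble), and since $N\supseteq[G,G]$ forces $(G/N)'=1$... wait, that only re-proves abelianness. The real point is that $G=NG'$ implies $G=N$ by a nilpotency/solubility-type argument: for soluble $G$ one shows $N G^{(k)}=G$ for all $k$ in the derived series by induction (the base case is $NG'=G$, and $NG^{(k)}=G$ gives $G^{(k)}=G^{(k)}\cap NG' $, then one pushes down using $[G,G^{(k)}]$...), ultimately concluding $N=G$ because the derived series terminates at $1$. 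Equivalently, and more cleanly, one invokes the standard fact that for a soluble (or finite) group a subgroup $N$ with $NG'=G$ must equal $G$, which itself follows from the observation that $N$ cannot be proper: a proper subgroup of a finite soluble group is contained in a maximal subgroup $M$, every maximal subgroup of a finite soluble group is normal with prime index, hence $G'\le M$, contradicting $NG'=G$. For infinite soluble groups one runs the derived-series induction directly. Once $N=G$ we get $w(G)\le m=w(G/G')$, completing part~(i) in the case $n>1$; the statement $w(G)\le 1\iff w(G/G')\le 1$ follows by the same reasoning together with the trivial cases.

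For part (ii), if $G$ is a nontrivial perfect group then $G'=G$, so $G/G'$ is trivial and $w(G/G')=0$. I cannot directly apply part (i) since that was stated for $n>1$ and requires finiteness or solubility, so instead I would argue directly: pick any nontrivial $g\in G$ and let $N$ be its normal closure; then $G/N$ is a quotient of the perfect group $G$, hence perfect, but... this does not immediately give $N=G$. The correct route is: $G$ perfect and finitely generated (the ambient setting here, since knot groups are finitely generated) means $G$ is the normal closure of finitely many elements; to get down to one element I would use the standard fact that a finitely generated perfect group is the normal closure of a single element — proved by induction on the number of normal generators, using that if $G$ is the normal closure of $\{a,b,\dots\}$ then, since $G=[G,G]$, one can write $a$ as a product of commutators and absorb it into a single modified generator. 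This is the classical argument (essentially the one behind the Wiegold-type results), and I would reproduce its inductive step rather than the full computation.

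The main obstacle is the reverse inequality in part~(i): showing that lifting a minimal generating set of $G/G'$ to $G$ and taking the normal closure recovers all of $G$. This is false for general infinite groups (free groups show $w$ can drop drastically under abelianization only in the easy direction, and there are non-soluble examples where it genuinely fails), so the proof must use solubility or finiteness in an essential way, and the cleanest formulation — that $NG'=G$ forces $N=G$ — needs either the maximal-subgroup structure theory of finite soluble groups or a transfinite induction down the derived series in the infinite soluble case. Assembling the perfect case into a single normal generator is a second, milder obstacle, handled by the classical commutator-absorption induction.
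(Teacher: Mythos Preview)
The paper does not give its own proof of this proposition; it is quoted verbatim from \cite[Corollaries~5.17, 5.18]{Chiodo14-1}. So there is nothing to compare against, and your proposal has to be judged on its own terms. On those terms there are genuine gaps.

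For the soluble case of part~(i), your derived-series idea is correct and can be made to work, but the ``cleaner'' argument you actually write out is wrong: it is \emph{not} true that every maximal subgroup of a finite soluble group is normal of prime index (in $S_3$ the Sylow $2$-subgroups are maximal but not normal). The correct and short argument is simply that $NG'=G$ makes $G/N$ perfect, and a soluble perfect group is trivial, so $N=G$.

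More seriously, you treat the \emph{finite} case as if it followed from the same implication ``$N\lhd G$, $NG'=G \Rightarrow N=G$''. That implication is false for finite groups: take $G=A_5$ and $N=1$. So the whole finite (non-soluble) case of part~(i) is unaccounted for. The actual argument in Chiodo requires separate input---essentially the fact that a nontrivial finite perfect group has weight one (your part~(ii)), fed back into an induction---and you never supply this step.

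For part~(ii) itself, you correctly observe that the naive attempt (``pick any $g\neq e$ and take its normal closure'') fails, but then you only gesture at a ``commutator-absorption induction'' of Wiegold type without carrying it out. As written this is not a proof; it is a pointer to a proof that you would still need to reconstruct. Since part~(ii) is also the missing ingredient for the finite case of part~(i), both halves of the proposition remain open in your proposal.
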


Using Proposition \ref{pro:weight}, we easily see that  
\begin{itemize}
\item
Every finite \textit{perfect} group is a TAV group. In particular, every finite \textit{simple} group is a TAV group as well.
\end{itemize} 

There are three nontrivial perfect groups of order less than $201$, which are $A_5, \mathrm{SL}_2(\mathbb{F}_5)$, and $\mathrm{GL}_3(\mathbb{F}_2)$ (see \cite{GN} and Tables 1 and 2 in the appendix). 

\subsection{Quotient groups}\label{subsec:2.3}

Let $G$ be a finite group, $H\triangleleft G$ a normal subgroup, and $\pr\colon G\to G/H$ the projection. We denote the regular representation of $G$ by $\rho_G$. Then, as for the vanishing of the twisted Alexander polynomial associated with the quotient group, we have the following. 

\begin{proposition}\label{pro:quotient}
If there are two epimorphisms $\overline{f}\colon G(K)\to G/H$ suct that $\D_K^{\rho_{G/H}\circ \overline{f}}(t)=0$ and $f\colon G(K)\to G$ such that $\pr\circ f=\overline{f}$, then $\D_K^{\rho_G\circ f}(t)=0$. 
\end{proposition}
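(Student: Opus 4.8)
The plan is to pass to rational coefficients via Remark~\ref{rmk:torsion} and then to realize $\Q[G/H][t^{\pm1}]$ as a direct summand of $\Q[G][t^{\pm1}]$ inside the category of modules over $\Q[t^{\pm1}][\G]$ carrying the twisted module structure, so that the first twisted homology of $E_K$ splits as a direct sum and non-torsionness is inherited. First I would recall from Remark~\ref{rmk:torsion} that, writing $\phi$ for the abelianization, $\D_K^{\rho_G\circ f}(t)=0$ is equivalent to $H_1(E_K;\Q[G][t^{\pm1}])$ not being $\Q[t^{\pm1}]$-torsion, and similarly for $G/H$; hence it suffices to show that non-torsionness of $H_1(E_K;\Q[G/H][t^{\pm1}])$ forces non-torsionness of $H_1(E_K;\Q[G][t^{\pm1}])$. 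The $\Q$-linear extension of $\pr\colon\Q[G]\to\Q[G/H]$ to a map $\Q[G][t^{\pm1}]\to\Q[G/H][t^{\pm1}]$ is a surjective homomorphism of right $\Q[t^{\pm1}][\G]$-modules for the two twisted structures (where $g\in\G$ acts by right multiplication by $f(g)$, resp. by $\overline{f}(g)=\pr(f(g))$, together with multiplication by $t^{\phi(g)}$), because $\pr$ is a ring homomorphism and the $t$-twist is identical on both sides; its kernel is $\Q[G]_0[t^{\pm1}]$, where $\Q[G]_0:=\ker(\Q[G]\to\Q[G/H])$, the span of all $g-gh$ with $g\in G$, $h\in H$.

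The second step is to split this short exact sequence. One checks that $\Q[G]_0$ is a \emph{right} $\Q[G]$-submodule of $\Q[G]$: here the normality of $H$ in $G$ is precisely what is needed, since $(g-gh)g'=gg'-ghg'$ and $g'^{-1}hg'\in H$. As $\Q[G]$ is semisimple by Maschke's theorem, there is a right $\Q[G]$-submodule $C\subset\Q[G]$ with $\Q[G]=\Q[G]_0\oplus C$ and $C\cong\Q[G]/\Q[G]_0\cong\Q[G/H]$, the isomorphism being compatible with the $\G$-action induced through $f$. Tensoring this decomposition over $\Q$ with $\Q[t^{\pm1}]$ and twisting by $t^{\phi}$—which acts diagonally and hence respects the splitting—gives
$$
\Q[G][t^{\pm1}]\;\cong\;\Q[G/H][t^{\pm1}]\oplus\Q[G]_0[t^{\pm1}]
$$
as right $\Q[t^{\pm1}][\G]$-modules, equivalently as the twisted $\Z[\G]$-modules used to define the chain complexes.

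Finally, since the tensor product $-\otimes_{\Z[\G]}C_*(\widetilde{E_K})$ commutes with direct sums, the splitting passes to chain complexes, so $C_*(E_K;\Q[G][t^{\pm1}])\cong C_*(E_K;\Q[G/H][t^{\pm1}])\oplus C_*(E_K;\Q[G]_0[t^{\pm1}])$ as complexes of $\Q[t^{\pm1}]$-modules; taking $H_1$ exhibits $H_1(E_K;\Q[G/H][t^{\pm1}])$ as a direct summand of $H_1(E_K;\Q[G][t^{\pm1}])$. A finitely generated module over the PID $\Q[t^{\pm1}]$ with a non-torsion direct summand is itself non-torsion, so $\D_K^{\rho_{G/H}\circ\overline f}(t)=0$ implies $\D_K^{\rho_G\circ f}(t)=0$ by Remark~\ref{rmk:torsion}. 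The only genuinely delicate point is the module bookkeeping in the second step—one must be sure the complement $C$ can be chosen $\Q[G]$-stably (which uses both Maschke's theorem and the fact that $\Q[G]_0$ is a submodule, i.e. that $H\triangleleft G$) and that the $t$-twist does not interfere—after which the argument is routine. One could alternatively try to run this through the lifting criterion of Theorem~\ref{lifting-thm}, but producing a nontrivial lift to $\Z[G\times\Z]\rtimes(G\times\Z)$ out of one to $\Z[(G/H)\times\Z]\rtimes((G/H)\times\Z)$ is less transparent than the homological splitting above, so I would prefer the approach just outlined.
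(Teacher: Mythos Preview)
Your proof is correct. The key idea—that by Maschke's theorem the right $\Q[G]$-module $\Q[G]$ splits off a copy of $\Q[G/H]$ (with the $\G$-action induced via $f$ matching that via $\overline f=\pr\circ f$), so that after tensoring with $\Q[t^{\pm1}]$ and passing to twisted homology one obtains $H_1(E_K;\Q[G/H][t^{\pm1}])$ as a $\Q[t^{\pm1}]$-summand of $H_1(E_K;\Q[G][t^{\pm1}])$—is exactly the content behind the paper's Lemma~\ref{lem:quotient}, which records the same splitting as a divisibility $\D_K^{\tilde\rho_{G/H}\circ f}(t)\mid\D_K^{\rho_G\circ f}(t)$ and from which Proposition~\ref{pro:quotient} follows at once. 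The paper omits the proof of the lemma; the intended argument decomposes the regular representation of $G$ into irreducibles and observes that every irreducible of $G/H$, pulled back via $\pr$, occurs with the correct multiplicity. Your route via Remark~\ref{rmk:torsion} and an explicit Maschke complement bypasses the irreducible decomposition and lands directly at the homological statement, which is arguably cleaner; the paper's formulation, on the other hand, yields the slightly stronger divisibility conclusion rather than just the implication $0\Rightarrow 0$. Either way the substance is the same semisimplicity argument.
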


We then see that for any non-fibered knot $K$ there are infinitely many TAV groups of $K$, because of Theorem \ref{thm:FV} and Proposition \ref{pro:quotient}. In fact, we only have to take the direct product of a given TAV group  and a finite cyclic group so that the weight of the direct product is one. The above proposition follows from the following lemma. We omit its proof. 

\begin{lemma}\label{lem:quotient}
Let $\rho_G\colon G\to \mathrm{GL}(|G|,\C)$ and $\rho_{G/H}\colon G/H\to \mathrm{GL}(|G/H|,\C)$ be the regular representations respectively. If there is an epimorphism $f\colon G(K)\to G$, then $\D_K^{\tilde{\rho}_{G/H}\circ f}(t)$ divides $\D_K^{\rho_G\circ f}(t)$, where $\tilde{\rho}_{G/H}=\rho_{G/H}\circ\pr$.
\end{lemma}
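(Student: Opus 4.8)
The plan is to exhibit the twisted chain complex computing $\D_K^{\tilde\rho_{G/H}\circ f}(t)$ as a direct summand (or a quotient compatible with the ordering) of the one computing $\D_K^{\rho_G\circ f}(t)$, so that the corresponding first homology modules are related by a surjection, and then invoke the standard fact that the order of a quotient module divides the order of the ambient module over the PID $\C[t^{\pm1}]$. Concretely, the regular representation $\rho_G$ is induced from the trivial representation of the trivial subgroup, and the regular representation $\rho_{G/H}$ is induced from the trivial representation of $H/H$ in $G/H$; combining these with $\pr\colon G\to G/H$, one has a $G$-equivariant surjection of coefficient modules $\Z[G]\twoheadrightarrow\Z[G/H]$ given by summing over $H$-cosets, equivariant for the $t$-action since $\phi$ factors appropriately. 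This induces a surjection of chain complexes $C_*(E_K;\C[t^{\pm1}]^{|G|})\twoheadrightarrow C_*(E_K;\C[t^{\pm1}]^{|G/H|})$, hence a surjection on $H_1$.

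First I would set up notation for the twisted chain complexes exactly as in Subsection \ref{subsec:2.1}, working over $R=\C$, and observe that the coefficient module for $\rho_G\circ f$ is $\C[G][t^{\pm1}]$ with $G(K)$ acting through $f$ (right multiplication on $\C[G]$) and through $\phi$ (multiplication by $t^{\phi(\cdot)}$), and similarly for $\tilde\rho_{G/H}\circ f = (\rho_{G/H}\circ\pr)\circ f$ with coefficient module $\C[G/H][t^{\pm1}]$. Second, I would check that the $\C$-linear map $\pi\colon\C[G]\to\C[G/H]$, $g\mapsto gH$, is a homomorphism of right $\C[G(K)]$-modules after tensoring with $\C[t^{\pm1}]$: right multiplication by $f(\gamma)$ commutes with passing to $H$-cosets because $\pr$ is a homomorphism, and the $t$-twist depends only on $\phi(\gamma)$ on both sides, so it is unchanged. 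Third, tensoring $\pi\otimes\mathrm{id}$ with $C_*(\tilde E_K)$ over $\Z[G(K)]$ yields a surjective chain map, hence a surjection $H_1(E_K;\C[G][t^{\pm1}])\twoheadrightarrow H_1(E_K;\C[G/H][t^{\pm1}])$ of $\C[t^{\pm1}]$-modules. Fourth, since $\C[t^{\pm1}]$ is a PID, if $M\twoheadrightarrow N$ then $\mathrm{ord}(N)\mid\mathrm{ord}(M)$ (the order ideal of $N$ contains that of $M$); applying this gives $\D_K^{\tilde\rho_{G/H}\circ f}(t)\mid\D_K^{\rho_G\circ f}(t)$ up to a unit, as claimed.

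The main obstacle I anticipate is purely bookkeeping: making sure that the reduction map $\C[G]\to\C[G/H]$ is $G(K)$-equivariant for the correct (right) module structure twisted by $\phi$, and that the induced map on homology is genuinely surjective rather than merely defined. One subtlety is that, while $\pi$ is surjective on coefficients, surjectivity of the induced map on $H_1$ requires that the chain-level surjection remains a surjection after tensoring over $\Z[G(K)]$ and passing to homology — this is automatic because $\otimes$ is right exact and $C_*(\tilde E_K)$ is free over $\Z[G(K)]$, so the chain map is degreewise split surjective, hence stays surjective on homology. An alternative, perhaps cleaner, route is to note that $\C[G]\cong\bigoplus_{i}(V_i)^{\oplus\dim V_i}$ decomposes into isotypic pieces and that $\rho_{G/H}\circ\pr$ consists precisely of those irreducibles of $G$ that are trivial on $H$; then $\D_K^{\rho_G\circ f}(t)$ factors as a product over all irreducibles of $G$ (with multiplicity) while $\D_K^{\tilde\rho_{G/H}\circ f}(t)$ is the subproduct over those factoring through $G/H$, whence divisibility is immediate — though this requires the multiplicativity of twisted Alexander polynomials under direct sums of representations, which should be recorded as a preliminary if this route is taken.
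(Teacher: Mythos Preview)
Your alternative route at the end --- decompose $\C[G]$ into irreducibles, observe that those factoring through $G/H$ assemble exactly to $\tilde{\rho}_{G/H}$ as a direct summand of $\rho_G$, and invoke multiplicativity of twisted Alexander polynomials under direct sums --- is precisely the argument the paper intends (a draft proof along these lines appears after the bibliography, though the main text simply says the proof is omitted). That route is correct and yields the slightly stronger product formula
\[
\D_K^{\rho_G\circ f}(t)=\D_K^{\tilde{\rho}_{G/H}\circ f}(t)\cdot\prod_{j}\bigl(\D_K^{\rho_j'\circ f}(t)\bigr)^{\dim W_j},
\]
where the $W_j$ are the irreducible $G$-summands not factoring through $G/H$.

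Your primary route has the right shape but one step is misjustified. The inference ``degreewise split surjective, hence stays surjective on homology'' is not valid in general: a splitting that is merely $\C[t^{\pm1}]$-linear in each degree need not commute with the differentials, so the connecting map $H_1\to H_0$ in the long exact sequence can be nonzero. What actually saves the argument here is that the coefficient surjection $\C[G]\to\C[G/H]$ already splits \emph{as a map of right $\C[G]$-modules} --- either by Maschke's theorem, or explicitly via $gH\mapsto\frac{1}{|H|}\sum_{h\in H}gh$, which is right $G$-equivariant because $H\triangleleft G$. That equivariant section, tensored with $\mathrm{id}_{C_*(\tilde{E}_K)}$ over $\Z[G(K)]$, is a genuine chain map, so the short exact sequence of complexes is split and the induced map on $H_1$ is (split) surjective. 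With this correction your first argument is complete and gives divisibility without naming the irreducible pieces; the paper's approach names them and produces the product formula above, which is marginally more informative but relies on the extra input of multiplicativity under direct sums.
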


The next example is a consequene of Proposition \ref{pro:quotient} (see Table 1). 

\begin{example}
Let $G$ be one of three finite groups $\mathrm{CSU}_2(\mathbb{F}_3)$, $\mathrm{GL}_2(\mathbb{F}_3)$, $A_4\rtimes C_4$ of order $48$, where $\mathbb{F}_3$ denotes a finite field of characteristic $3$. We omit the precise definitions of these groups (see \cite{GN}). Since they admit the symmetric group $S_4$ as a qutient group, and non-fibered knots $K=9_{35},9_{46}$ have $S_4$ as the smallest TAV group (see Theorem \ref{thm:main-2}(i) and Table 1), we can conclude that these three groups are TAV groups of $K$. We can also find this kind of examples in Tables 1 to 3. 
\end{example}

\section{Some properties of $\ord(K)$}\label{sec:4}

In this section, we discuss some properties of the TAV order $\ord(K)$. Firstly, we explain how to construct knots admitting a given TAV group. 
Secondly, we show the existence of infinitely many hyperbolic non-fibered knots for a given TAV group. Thirdly, we exhibit a notable property of $\ord(K)$ for $2$-bridge knots. 

\subsection{Knots admitting a given TAV group}\label{subsec:3.0}
Let $G$ be a finite group with $w(G) = 1$ and assume that $G'$ is not a $p$-group. By Theorem \ref{thm:main-4} (\cite[Theorem~1.5]{IMS23-1}), $G$ is a TAV group of some knot. We can theoretically find such a knot by tracking the proof given in \cite{IMS23-1}, but it seems an impractically hard work and the resultant knot would be too complicated to examine. In this subsection, we give a more practical recipe to construct a knot admitting a given TAV group.

In order to explain the procedure, we recall the satellite construction of knots following \cite{CS16-1}. Let $K$ be a knot and $\alpha$ an unknotted loop disjoint from $K$. For another knot $J$, we glue the exteriors $E_\alpha$ of $\alpha$ and $E_J$ of $J$ by an orientation-reversing homeomorphism between the boundary tori so that a meridian and a preferred longitude of $\alpha$ are respectively identified with a preferred longitude and a meridian of $J$. Since the obtained $3$-manifold is homeomorphic to $S^3$, we can regard $K \subset E_\alpha$ as a new knot in $S^3$, which we denote by $K(\alpha, J)$. See Figure \ref{fig:satellite} in Section \ref{section:pqr} for an example, where $K= T(2, 15)$ and $J = T(3,5)$. 

Let $i \colon E_{K \cup \alpha} \to E_K$ and $j \colon E_{K \cup \alpha} \to E_{K(\alpha, J)}$ be the inclusion maps, which induce group homomorphisms $i_* \colon G(K \cup \alpha) \to G(K)$ and $j_* \colon G(K \cup \alpha) \to G(K(\alpha, J))$, where $E_{K\cup\alpha}=S^3\setminus\nu(K\cup\alpha)$ and $G(K\cup\alpha)=\pi_1(E_{K\cup\alpha})$. By \cite[Lemma 4.1]{CS16-1}, there exists a group homomorphism $\psi: G(K(\alpha, J)) \to G(K)$ such that $i_* = \psi \circ j_*$: $\psi$ is constructed by gluing the inclusion homomorphism $i_* \colon G(K \cup \alpha) \to G(K)$ and $\alpha_* \circ \phi_J \colon G(J) \to G(K)$, where $\phi_J \colon G(J) \to \mathbb{Z}$ is the abelianization map and $\alpha_* \colon \mathbb{Z} \to G(K)$ is the group homomorphism that sends $1 \in \mathbb{Z}$ to the element represented by the loop $\alpha$.

Let us see how to construct a knot admitting a given TAV group. Let $G$ be any TAV group. Since $G'$ has non-prime-power order, by \cite[Lemma~1]{gru}, $G'$ contains at least one of (i) a cyclic group of non-prime-power order or (ii) a non-abelian group of weight one; 
take such a subgroup and denote it by $H$.

Since $w(G) = 1$, there exists a knot $K$ admitting an epimorphism $f_0\colon G(K) \to G$ onto $G$. Since $H$ is also of weight one, 
we can take a loop $\alpha \subset E_K$ that bounds a disc in $S^3$ such that ${\rm lk}(K, \alpha) = 0$ and $f_0(\alpha)$ is contained in $H$ and normally generates $H$ (in $H$). In the case (i), we take a knot $J$ with $\Delta_J(e^{2 \pi i/ |H|}) = 0$ and define $f = f_0 \circ \psi \colon G(K(\alpha, J)) \to G$. In the case (ii), we take a knot $J$ that admits an epimorphism $f_J \colon G(J) \to H$ such that $f_J(m) = f_0(\alpha)$ and $f_J(\ell) = e$, where $(m, \ell)$ is a meridian-longitude pair of $J$, and then obtain a homomorphism $f \colon G(K(\alpha, J)) \to G$ by gluing $f_0$ and $f_J$.

\begin{theorem}\label{const-thm}
We have $\Delta_{K(\alpha, J)}^{\rho \circ f}(t) = 0$. In particular, $G$ is a TAV group of the knot $K(\alpha, J)$.
\end{theorem}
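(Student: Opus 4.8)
The plan is to verify the vanishing criterion of Theorem~\ref{lifting-thm} for $\Delta_{K(\alpha,J)}^{\rho\circ f}(t)$ by exhibiting a nontrivial lift of $f\times\phi\colon G(K(\alpha,J))\to G\times\Z$ to $\Z[G\times\Z]\rtimes(G\times\Z)$. The key structural input is the decomposition supplied by \cite[Lemma~4.1]{CS16-1}: $G(K(\alpha,J))$ is the amalgam of $G(K\cup\alpha)$ and $G(J)$ over the meridian--longitude data, and $\psi\colon G(K(\alpha,J))\to G(K)$ glues $i_*$ with $\alpha_*\circ\phi_J$. Correspondingly, the abelianization $\phi$ of $G(K(\alpha,J))$ restricts to $\phi_K\circ i_*$ on the $E_K$-side and, because ${\rm lk}(K,\alpha)=0$ forces $\alpha$ to be nullhomologous in $E_K$, restricts to $0$ on the $G(J)$-factor. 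Thus on the $J$-side the map $f\times\phi$ factors through $H\times\{0\}\subset G\times\Z$, while on the $E_K\cup\alpha$-side it is $(f_0\circ i_*)\times(\phi_K\circ i_*)$. First I would set up these two pieces separately and then amalgamate the lifts.

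\emph{Case (i): $H$ cyclic of non-prime-power order.} Here I would use the classical computation recalled in Subsection~\ref{subsec:2.2}: for an epimorphism onto $C_n$ (here $n=|H|$), the twisted Alexander polynomial of $J$ is $\prod_{j=1}^n\Delta_J(\alpha^j t)$, which vanishes precisely because $\Delta_J(e^{2\pi i/|H|})=0$; equivalently, by Theorem~\ref{lifting-thm} there is a nontrivial lift of $f_J\times\phi_J$ on the $J$-side landing in $\Z[C_n\times\Z]\rtimes(C_n\times\Z)$, and since $\phi$ kills the $J$-factor the relevant lift actually lands in $\Z[C_n]\rtimes C_n\subset\Z[G\times\Z]\rtimes(G\times\Z)$ (with trivial $\Z$-coordinate). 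On the $E_K$-side I would use the \emph{trivial} lift of $f_0\times\phi$ (the one with $0$ in the $\Z[G\times\Z]$-coordinate). The point is that these two lifts must be glued into a single homomorphism on the amalgam, which requires them to agree on the images of the meridian $m$ and longitude $\ell$ of $J$; by construction $j_*$ sends $m$ to $\alpha$ and $\ell$ to a longitude of $\alpha$, and the $\Z[G\times\Z]$-component of the $J$-side lift on $m,\ell$ is controlled by the cyclic computation. The technical care is to arrange the $J$-side lift so that its coordinate on $\alpha$ (the image of $m$) matches the trivial value coming from the $E_K$-side; since $f_0(\alpha)\in H$ and $H$ is abelian, one has freedom to translate the lift by an element of $\Z[C_n]$, and I expect the standard normalization of the cyclic lift already gives the needed compatibility, with the nontriviality surviving because the loop $\alpha$ is nullhomologous so the $\Z$-coordinate never interferes.

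\emph{Case (ii): $H$ non-abelian of weight one.} Here $\Delta_H^{\rho_H}(t)$ need not be the relevant object; instead I would invoke Theorem~\ref{thm:main-4}/Theorem~\ref{const-thm}'s hypothesis in the form: since $J$ was chosen with an epimorphism $f_J\colon G(J)\to H$ sending $m\mapsto f_0(\alpha)$, $\ell\mapsto e$, and $\phi$ vanishes on the $J$-factor, the twisted polynomial on the $J$-side is $\Delta_J^{\rho_H\circ f_J}$ evaluated with trivial $t$-action, i.e. an \emph{untwisted-in-$t$} invariant of $J$; I would arrange $J$ so that this has a nontrivial lift — concretely, take $J$ to be a knot whose group surjects onto $H$ in a way already known (from the weight-one condition and the realization theorem of \cite{GA75-1},\cite{Johnson80-1}) and with a suitable cabling, so that Theorem~\ref{lifting-thm} applies to the $J$-piece. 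Then glue exactly as in Case~(i): the trivial lift on the $E_K\cup\alpha$ side, the nontrivial lift on the $J$-side, checking agreement on $m,\ell$.

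The final step in both cases is to confirm that the amalgamated lift $\tilde f$ is (a) well-defined as a homomorphism on $G(K(\alpha,J))$ — this is automatic from the van Kampen presentation once the two pieces agree on the peripheral subgroup of $J$ — (b) a genuine lift of $f\times\phi$, i.e. $p_{G\times\Z}\circ\tilde f=f\times\phi$, which holds because each piece is, and (c) nontrivial in the sense of Theorem~\ref{lifting-thm}, i.e. ${\rm Im}\,\tilde f\cap(\Z[G\times\Z]\times\{(e,0)\})\neq\{(0;e,0)\}$; nontriviality is inherited from the $J$-side lift since the witnessing element there already lies in $\Z[G\times\Z]\times\{(e,0)\}$ (recall $\phi$ and the $H$-direction contributions vanish on the relevant loop). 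By Theorem~\ref{lifting-thm} this yields $\Delta_{K(\alpha,J)}^{\rho\circ f}(t)=0$, and since $f$ is onto $G$ (as $f_0$ is onto $G$ and $\psi$ is onto, or in Case~(ii) because $f_0$ and $f_J$ together hit all of $G$), $G$ is a TAV group of $K(\alpha,J)$. I expect the main obstacle to be the gluing compatibility on the meridian--longitude pair of $J$: one must check that the chosen nontrivial lift on the $J$-side can be normalized (using the abelian slack in Case~(i), or an explicit choice of $f_J$ and of the cable in Case~(ii)) so that its $\Z[G\times\Z]$-coordinate on the image of $m$ equals the value forced by the trivial lift on the $E_K$ side, while its $\Z$-coordinate is $0$; everything else is bookkeeping with the van Kampen decomposition and Theorem~\ref{lifting-thm}.
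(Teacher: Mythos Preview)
Your overall framework — the van Kampen decomposition of $G(K(\alpha,J))$ and the plan to glue a lift on the $J$-side with the trivial lift on the $E_{K\cup\alpha}$-side — matches what the paper does for case~(ii) (Proposition~\ref{const-prop-2}). But there are genuine gaps in both cases.

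\textbf{Case (i).} Your key step is wrong: the polynomial $\prod_{j=1}^{n}\Delta_J(\alpha^j t)$ is \emph{not} the zero element of $\mathbb{C}[t^{\pm1}]$; the hypothesis $\Delta_J(e^{2\pi i/|H|})=0$ only gives it a root at $t=1$. Hence Theorem~\ref{lifting-thm} does \emph{not} provide a nontrivial lift of $f_J\times\phi_J$, and there is nothing to ``project'' to trivial $\mathbb{Z}$-coordinate. The paper handles case~(i) by an entirely different route (Proposition~\ref{const-prop-1}): a Mayer--Vietoris argument on the $(G\times\mathbb{Z})$-cover of $E_{K(\alpha,J)}$ shows that $H_1$ contains, modulo a finite-rank piece, infinitely many copies of $H_1$ of the $d$-fold \emph{branched} cyclic cover of $J$ (where $d=|H|$), and this has positive rank precisely when $\Delta_J(e^{2\pi i/d})=0$. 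The condition on $\Delta_J$ enters through a branched-cover Betti number, not through any twisted polynomial of $J$ vanishing identically.

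\textbf{Case (ii).} Here the paper does use the lifting criterion, gluing the explicit lift $\tilde f_J(u)=\bigl(((f(u),0)-(e,0))\xi;\,f(u),0\bigr)$ on $G(J)$, for a specific $\xi\in\mathbb{Z}[G\times\mathbb{Z}]$ built from $f(m)$ and $f(\ell)$, to the trivial lift on $G(K\cup\alpha)$. Two points where your sketch falls short. First, $J$ is already fixed by the hypotheses of the theorem, so you may not ``arrange $J$ with a suitable cabling''. Second, and more seriously, the nontriviality witness cannot come purely from the $J$-side: the paper's $\tilde f_J$ satisfies $\tilde f_J(u)=(0;e,0)$ whenever $f(u)=e$, so no element of $G(J)$ alone works. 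The paper instead takes $u_1\in G(J)$ with $f(u_1)\notin\langle f(m),f(\ell)\rangle$ (this is exactly where non-abelianness of $f_J(G(J))$ is used) together with a meridian $u_2\in G(K\cup\alpha)$ of $K$, so that $\phi(u_2)=1$, and checks that $\tilde f\bigl((u_1u_2)^{d_3d_4}u_2^{-d_3d_4}\bigr)$ has nonzero $\mathbb{Z}[G\times\mathbb{Z}]$-coordinate; the shift in the $\mathbb{Z}$-direction contributed by $u_2$ is essential for the sum not to collapse.
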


In the two propositions below, we examine when $\Delta_{K(\alpha, J)}^{\rho \circ f}(t) = 0$ and then Theorem \ref{const-thm} follows from them immediately: The case (i) follows from Proposition \ref{const-prop-1} and the case (ii) from Proposition \ref{const-prop-2}.

\begin{remark}
By \cite[Lemma~1]{gru}, we can take $H$ as follows: (i) a cyclic group of order $pq$ with distinct primes $p, q$ or (ii) a non-abelian metabelian group $C_p^n \rtimes C_q$ of weight one, 
where $p, q$ are distinct primes and $n$ is a positive integer. If we choose $H$ in these ways, we can take the torus knot $T(p,q)$ as $J$.
\end{remark}

Let $G$ be a finite group and $f \colon G(K(\alpha, J)) \to G$ a homomorphism. Since $E_{K(\alpha, J)} = E_{K \cup \alpha} \cup E_J$, we have the restriction homomorphism $f_J\colon G(J) \to G$. The homomorphism $f$ factors through $\psi$, i.e., there exists a homomorphism $f_0 \colon G(K) \to G$ such that $f = f_0 \circ \psi$, if and only if the image of $f_J$ is cyclic. 

\begin{proposition}\label{const-prop-1}
Suppose that the linking number ${\rm lk}(K, \alpha)$ equals zero. We assume that $f$ factors through $\psi$, and let $d$ be the order of the cyclic group $f_J(G(J))$. Then, $\Delta^{\rho \circ f}_{K(\alpha, J)}(t) = 0$ if and only if $\Delta^{\rho \circ f_0}_K (t) = 0$ or $\Delta_J(e^{2 \pi i /d} ) = 0$, where $\Delta_J (t)$ is the Alexander polynomial of $J$.
\end{proposition}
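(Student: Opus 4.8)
\textbf{Proof proposal for Proposition \ref{const-prop-1}.}

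The plan is to compute the twisted Alexander polynomial of $K(\alpha,J)$ by decomposing the exterior as $E_{K(\alpha,J)} = E_{K\cup\alpha}\cup_T E_J$, where $T$ is the torus along which the gluing is performed, and applying a Mayer--Vietoris argument to the twisted chain complexes with coefficients in $\C[G][t^{\pm1}]$. First I would record the hypotheses on the gluing: since $\mathrm{lk}(K,\alpha)=0$, the loop $\alpha$ lies in the kernel of the abelianization $\phi\colon G(K)\to\Z$, so the meridian of $J$ (which is identified with the longitude of $\alpha$) maps to $0$ under the composite with $\phi$; meanwhile the longitude of $J$ (identified with the meridian of $\alpha$) is null-homologous in $E_J$, so the $t$-variable on $E_J$ is trivial. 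Consequently the restriction of $\rho\circ f$ to $G(J)$ factors through the cyclic image $f_J(G(J))=C_d$, and its contribution is governed by the \emph{untwisted} theory for $C_d$: by the cyclic-group formula quoted in Subsection \ref{subsec:2.2}, the relevant order is (up to units) $\prod_{j=0}^{d-1}\Delta_J(e^{2\pi i j/d}\,t)$, but because the $t$-variable is trivial on $E_J$ this specializes to a product of the numbers $\Delta_J(e^{2\pi i j/d})$, one factor of which is $\Delta_J(e^{2\pi i/d})$ (the $j=0$ factor being $\Delta_J(1)=\pm1$).

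Next I would set up the Mayer--Vietoris sequence for $H_*(E_{K(\alpha,J)};\C[G][t^{\pm1}])$ coming from $E_{K\cup\alpha}$, $E_J$, and $T$. The torus $T$ has $H_1(T;\C[G][t^{\pm1}])$ controlled by the rank-one contributions of its two core curves; since one of them ($\ell_J$) carries trivial $t$ and lies in the kernel of $f$ restricted appropriately, while the other ($m_J$) carries trivial $t$ as well (as $\mathrm{lk}(K,\alpha)=0$ forces the relevant power of $t$ to be $0$), the homology of $T$ over $\C[G][t^{\pm1}]$ is a free $\C[G]$-module and in particular $t$-torsion of a controlled kind. Using Remark \ref{rmk:torsion}, $\Delta^{\rho\circ f}_{K(\alpha,J)}(t)\neq 0$ is equivalent to $H_1(E_{K(\alpha,J)};\C[G][t^{\pm1}])$ being $\C[t^{\pm1}]$-torsion, and the Mayer--Vietoris sequence shows this holds iff both $H_1(E_{K\cup\alpha};\C[G][t^{\pm1}])$ and $H_1(E_J;\C[G][t^{\pm1}])$ are $\C[t^{\pm1}]$-torsion (the gluing term $H_*(T;-)$ being harmless). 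The first of these is equivalent to $\Delta^{\rho\circ f_0}_K(t)\neq 0$: here I would use the hypothesis that $f=f_0\circ\psi$ together with the fact that $E_{K\cup\alpha}$ is homotopy equivalent to $E_K$ in the sense relevant to the twisted invariant — more precisely, drilling the null-homologous unknot $\alpha$ adds a free summand but does not affect the $\C[t^{\pm1}]$-torsion-ness of $H_1$, since $\alpha$ is trivial in homology and $f(\alpha)\in C_d$ contributes only the untwisted $C_d$-part whose effect is the same product of $\Delta_J$-values. The second is equivalent, by the cyclic-group computation above, to $\Delta_J(e^{2\pi i/d})\neq 0$.

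Assembling: $\Delta^{\rho\circ f}_{K(\alpha,J)}(t)$ vanishes iff one of the two pieces fails to be $\C[t^{\pm1}]$-torsion, i.e. iff $\Delta^{\rho\circ f_0}_K(t)=0$ or $\Delta_J(e^{2\pi i/d})=0$. To make the ``only if'' and ``if'' both clean I would in fact aim for a multiplicative formula of the shape
\[
\Delta^{\rho\circ f}_{K(\alpha,J)}(t)\;\doteq\;\Delta^{\rho\circ f_0}_K(t)\cdot\prod_{j=1}^{d-1}\Delta_J\big(e^{2\pi i j/d}\big)
\]
up to a unit in $\C[t^{\pm1}]$ and up to the Wada-type correction terms from $H_0$, from which the claimed equivalence is immediate (a product over $\C$ of actual complex numbers and a polynomial in $t$ is zero iff one factor is). The main obstacle I expect is bookkeeping the degree-zero and Wada-denominator corrections in the Mayer--Vietoris argument — twisted Alexander polynomials are only defined up to units and the naive long exact sequence relates \emph{orders} of homology modules only after one controls $H_0$ and $H_2$ of all three pieces — so the delicate step is checking that the torus-gluing contribution $H_*(T;\C[G][t^{\pm1}])$ exactly cancels against the relevant $H_0$ terms rather than introducing a spurious factor that could itself vanish; this is where the hypotheses $\mathrm{lk}(K,\alpha)=0$ and ``$f$ factors through $\psi$'' are used essentially, since they force the curves on $T$ to carry trivial $t$-weight and cyclic $f$-image, keeping that contribution a unit.
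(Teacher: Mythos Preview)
Your overall strategy --- Mayer--Vietoris on the splitting $E_{K(\alpha,J)}=E_{K\cup\alpha}\cup_T E_J$ --- is exactly the one the paper uses (phrased there at the level of the covering space $\tilde{E}$ associated with $f\times\phi$). But the step where you write ``the Mayer--Vietoris sequence shows this holds iff both $H_1(E_{K\cup\alpha};\C[G][t^{\pm1}])$ and $H_1(E_J;\C[G][t^{\pm1}])$ are $\C[t^{\pm1}]$-torsion'' is a genuine gap, not just bookkeeping, and in fact the assertion is false on \emph{both} pieces.

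On the $E_J$ side: since $\mathrm{lk}(K,\alpha)=0$ the $t$-action is trivial on $E_J$, so $H_1(E_J;\C[G][t^{\pm1}])\cong H_1(E_J;\C[G])\otimes_\C\C[t^{\pm1}]$, and this is $\C[t^{\pm1}]$-torsion only if $H_1(E_J;\C[G])=0$. But the restriction of the regular representation to $G(J)$ has cyclic image $C_d$, so the relevant cover of $E_J$ is a disjoint union of copies of the $d$-fold \emph{unbranched} cyclic cover of $E_J$, which always has $b_1\ge 1$ (the lift $m^d$ of the meridian survives). Hence $H_1(E_J;\C[G][t^{\pm1}])$ is \emph{never} torsion, regardless of $\Delta_J(e^{2\pi i/d})$. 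On the $E_{K\cup\alpha}$ side: drilling $\alpha$ adds a boundary torus, and since the meridian of $\alpha$ is null-homotopic in $E_K$ it lies in $\ker(f_0\times\phi)$, so its lifts produce an infinite-rank free $\C[t^{\pm1}]$-summand in $H_1(E_{K\cup\alpha};\C[G][t^{\pm1}])$ that is absent from $H_1(E_K;\C[G][t^{\pm1}])$. So neither piece is torsion on its own; the content of the proof is precisely that these two non-torsion contributions cancel against $H_1(T)$.

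What the paper does is make this cancellation explicit. After checking that the connecting map $\partial$ vanishes (so $H_1(\tilde E)=\mathrm{Coker}\,\iota$), it splits $H_1(T)$ with twisted coefficients as $M\oplus L$, generated respectively by lifts of $m^d$ and of $\ell$ (the preferred longitude of $J$, which is the meridian of $\alpha$). Then $\iota_2|_L=0$ while $\iota_2|_M$ is injective; dually, quotienting $H_1(\tilde E_{K\cup\alpha})$ by $\iota_1(L)$ is exactly filling the $\alpha$-drilling back in and returns $H_1(\tilde E_K)$, while quotienting $H_1(\tilde E_J)$ by $\iota_2(M)$ caps the meridians and yields infinitely many copies of $H_1(B_J^d)$, the $d$-fold \emph{branched} cover. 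This produces a filtration of $\mathrm{Coker}\,\iota$ with graded pieces $H_1(\tilde E_K)$ and $\bigoplus H_1(B_J^d)$, from which the stated equivalence follows. Your sketch stops short of this $M\oplus L$ analysis, and the heuristic that the torus only affects ``$H_0$ and Wada-denominator corrections'' is incorrect: the torus contribution lives in $H_1$ and is what converts the unbranched cover of $E_J$ into the branched one and simultaneously undoes the drilling of $\alpha$.
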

\begin{proof}
By \cite[Theorem 3.1]{KSW05-1}, there exists a polynomial $h(t) \in \mathbb{Z}[t^{\pm 1}]$ such that $\Delta^{\rho \circ f}_{K(\alpha, J)}(t) = \Delta^{\rho \circ f_0}_K (t) \cdot h(t)$; since $\Delta^{\rho \circ f_0}_K (t) = 0$ implies that $\Delta^{\rho \circ f}_{K(\alpha, J)}(t) = 0$, we assume $\Delta^{\rho \circ f_0}_K (t) \neq 0$ and show that $\Delta^{\rho \circ f}_{K(\alpha, J)}(t) = 0$ if and only if $\Delta_J(e^{2 \pi i /d} ) = 0$.

Let $\tilde{E}$ be the covering space of $E_{K(\alpha, J)}$ corresponding to $f \times \phi \colon G(K(\alpha, J)) \to G \times \mathbb{Z}$, i.e., let $\tilde{E}_{K(\alpha, J)}$ be the universal covering space of $E_{K(\alpha,J)}$ 
and define
$$\tilde{E} = (G \times \mathbb{Z}) \times_{f \times \phi} \tilde{E}_{K(\alpha, J)},$$
where $G$ and $\mathbb{Z}$ are equipped with the discrete topologies; remark that $\tilde{E}$ is not necessarily connected. It is sufficient to show that $H_1(\tilde{E})$ has infinite rank over $\mathbb{Z}$ (see Remark \ref{rmk:torsion}). Let $\pi_{K(\alpha, J)} \colon \tilde{E} \to E_{K(\alpha, J)}$ denote the covering map, and set
$$\tilde{E}_{K(\alpha, J)} = \pi_{K(\alpha, J)}^{-1}(E_{K \cup \alpha}), \qquad \tilde{E}_J = \pi_{K(\alpha, J)}^{-1}(E_J),$$
recalling that $E_{K(\alpha, J)} = E_{K \cup \alpha} \cup E_J$ as above.

Since $\tilde{E} = \tilde{E}_{K\cup \alpha} \cup \tilde{E}_J$, we have the Mayer-Vietoris exact sequence
$$\xymatrix@R=0cm{\cdots \ar[r] & H_1(\tilde{E}_{K \cup \alpha} \cap \tilde{E}_J) \ar[r]^-{\iota = (\iota_1, \iota_2)} & H_1(\tilde{E}_{K \cup \alpha}) \oplus H_1(\tilde{E}_J) \ar[r] & H_1(\tilde{E}) & \\
\ar[r]^-\partial & H_0(\tilde{E}_{K \cup \alpha} \cap \tilde{E}_J) \ar[r] & H_0(\tilde{E}_{K \cup \alpha}) \oplus H_0(\tilde{E}_J) \ar[r] & H_0(\tilde{E}) \ar[r] & 0.}$$
Because ${\rm lk}(K, \alpha) = 0$ and $f_J(G(J))$ is cyclic of order $d$, each connected component of $\tilde{E}_J$ is the $d$-fold cyclic covering space of $E_J$; in particular, the boundary is also connected and hence the inclusion homomorphism of $\tilde{E}_{K \cup \alpha} \cap \tilde{E}_J \subset \tilde{E}_J$ at dimension $0$ is injective. This implies that the connecting homomorphism $\partial\colon H_1(\tilde{E}) \to H_0(\tilde{E}_{K \cup \alpha} \cap \tilde{E}_J)$ equals zero. Thus, $H_1(\tilde{E})$ is isomorphic to the cokernel of the inclusion homomorphism $\iota \colon H_1(\tilde{E}_{K \cup \alpha} \cap \tilde{E}_J) \to H_1(\tilde{E}_{K \cup \alpha}) \oplus H_1(\tilde{E}_J)$.

Let $m \subset \partial E_J$ be a meridional loop and $\ell \subset \partial E_J$ a longitudinal loop with ${\rm lk}(J, \ell) = 0$. Let $M, L \subset H_1(\tilde{E}_{K\cup\alpha} \cap \tilde{E}_J)$ be the subspaces generated by the homology classes represented by the lifts of $m^d,\ell$, respectively. Since $\tilde{E}_{K\cup\alpha} \cap \tilde{E}_J = \partial E_J$, $H_1(\tilde{E}_{K\cup\alpha} \cap \tilde{E}_J) = M \oplus L$. Because each connected component of $\tilde{E}_J$ is the $d$-fold cyclic covering space of $E_J$, $\iota_2|_L \colon L \to H_1(\tilde{E}_J)$ is zero and $\iota_2|_M \colon M \to H_1(\tilde{E}_J)$ is injective. Hence ${\rm Coker} \; \iota$ contains a subspace isomorphic to $H_1(\tilde{E}_{K \cup \alpha})/\iota_1(L) \cong H_1(\tilde{E}_K)$, where $\tilde{E}_K$ is the covering space of $E_K$ corresponding to $f_0 \times \phi\colon G(K) \to G \times \mathbb{Z}$, and the quotient of ${\rm Coker}\;\iota$ over the subspace is isomorphic to the homology group $H_1(\widehat{E}_J)$ of $\widehat{E}_J$, the $3$-manifold obtained by attaching solid tori to the boundary components of $\tilde{E}_J$ so that the lifts of $m^d$ bound discs in $\widehat{E}_J$.

As we assume $\Delta^{\rho \circ f_0}_K (t) \neq 0$, the subspace isomorphic to $H_1(\tilde{E}_K)$ is of finite rank over $\mathbb{Z}$. Furthermore, because each connected component of $\widehat{E}_J$ is homeomorphic to the $d$-fold branched cyclic cover $B_J^d$ of $E_J$, $H_1(\widehat{E}_J)$ is isomorphic to the direct sum of infinitely many copies of $H_1(B_J^d)$. Therefore we have $\Delta^{\rho \circ f}_{K(\alpha, J)}(t) = 0$, i.e., ${\rm rank}_{\mathbb{Z}} \, H_1(\tilde{E}) = \infty$, if and only if $H_1(B_J^d)$ has positive rank, which is equivalent to the condition $\Delta_J(e^{2 \pi i/d}) = 0.$
\end{proof}

\begin{proposition}\label{const-prop-2}
Suppose that the linking number ${\rm lk}(K, \alpha)$ equals zero. If $f \colon G(K(\alpha, J)) \to G$ does not factor through $\psi$, $\Delta^{\rho \circ f}_{K(\alpha, J)}(t) = 0$.
\end{proposition}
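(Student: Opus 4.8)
The plan is to follow the covering-space strategy of the proof of Proposition \ref{const-prop-1}, but to read off the vanishing from the combinatorics of how the covering splits rather than from a branched-cover homology group. As there, it suffices (by Remark \ref{rmk:torsion}) to show that $H_1(\widetilde{E};\Z)$ has infinite rank over $\Z$, where $\widetilde{E}$ is the covering space of $E_{K(\alpha,J)}$ corresponding to $f\times\phi\colon G(K(\alpha,J))\to G\times\Z$, and then to exploit the splitting $\widetilde{E}=\widetilde{E}_{K\cup\alpha}\cup\widetilde{E}_J$ into the preimages of $E_{K\cup\alpha}$ and $E_J$, with $\widetilde{E}_{K\cup\alpha}\cap\widetilde{E}_J$ the preimage of the torus $\partial E_J$.

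First I would record two preliminary facts. (1) Since ${\rm lk}(K,\alpha)=0$, the meridian $m$ of $J$, which is glued to a preferred longitude of $\alpha$, is null-homologous in $E_{K(\alpha,J)}$; as $G(J)$ is normally generated by $m$, it follows that $\phi|_{G(J)}=0$. Hence each connected component of $\widetilde{E}_J$ is a \emph{compact} $3$-manifold, namely a copy of the covering $E_J^Q$ of $E_J$ with deck group $Q:=f_J(G(J))$, and $\widetilde{E}_J$ has countably infinitely many components (one for each element of $(G/Q)\times\Z$). (2) Because $f$ does not factor through $\psi$, the group $Q$ is non-cyclic, and I claim that then $A:=f_J(\pi_1(\partial E_J))=\langle f_J(m),f_J(\ell)\rangle$ is a \emph{proper} subgroup of $Q$, where $\ell$ is a preferred longitude of $J$. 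Indeed, $f_J(m)$ and $f_J(\ell)$ commute, since $m$ and $\ell$ come from $\pi_1$ of the boundary torus, so $A$ is abelian; and $f_J(\ell)\in[Q,Q]$ because $\ell$ is null-homologous in $E_J$; hence $A=Q$ would force $Q$ abelian, then $f_J(\ell)=e$, then $Q=\langle f_J(m)\rangle$ cyclic -- a contradiction. Therefore $[Q:A]\ge 2$, so each component of $\widetilde{E}_J$ has at least two boundary tori, each glued along a component of $\widetilde{E}_{K\cup\alpha}$.

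Next comes the graph-of-spaces count. Since $\partial E_J$ separates $E_{K(\alpha,J)}$ into $E_{K\cup\alpha}$ and $E_J$, the space $\widetilde{E}$ is the total space of a graph of path-connected spaces over a bipartite graph $\Gamma$ whose vertices are the components of $\widetilde{E}_{K\cup\alpha}$ and of $\widetilde{E}_J$, and whose edges are the components of $\widetilde{E}_{K\cup\alpha}\cap\widetilde{E}_J$; by (2) every $\widetilde{E}_J$-vertex has degree $[Q:A]\ge 2$. Because $\phi$ restricted to $G(K\cup\alpha)$ is onto $\Z$ (a meridian of $K$ lies in $G(K\cup\alpha)$), the space $\widetilde{E}_{K\cup\alpha}$ has only finitely many components, so $\Gamma$ has finitely many $\widetilde{E}_{K\cup\alpha}$-vertices but, by (1), infinitely many $\widetilde{E}_J$-vertices. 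Choosing a connected component $\Gamma^{0}$ of $\Gamma$ containing infinitely many $\widetilde{E}_J$-vertices (possible since $\Gamma$ has finitely many components), an Euler-characteristic count -- using that each edge of $\Gamma^{0}$ has exactly one endpoint among the $\widetilde{E}_J$-vertices, each of degree $\ge 2$ -- gives $b_1(\Gamma^{0})=\infty$. Since $\pi_1$ of the total space of a graph of connected spaces surjects onto $\pi_1$ of the underlying graph, $H_1$ of the component of $\widetilde{E}$ lying over $\Gamma^{0}$ surjects onto $H_1(\Gamma^{0};\Z)$, a free abelian group of countably infinite rank. Hence $\mathrm{rank}_{\Z}H_1(\widetilde{E})=\infty$, and by Remark \ref{rmk:torsion} this yields $\Delta^{\rho\circ f}_{K(\alpha,J)}(t)=0$.

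I expect the crux to be fact (2), that non-cyclicity of $Q$ forces $[Q:A]\ge 2$: this is exactly what prevents the $\widetilde{E}_J$-vertices from being leaves of $\Gamma$, and it must genuinely use the hypothesis, since in the cyclic case (Proposition \ref{const-prop-1}) the polynomial need not vanish. A secondary point to handle carefully is that the hypothesis ${\rm lk}(K,\alpha)=0$ is what makes the $\widetilde{E}_J$-pieces compact, i.e.\ gives each of them only finitely many (here $[Q:A]$) boundary tori, and that the edge spaces of the graph of spaces are connected tori, which is what makes the surjection onto $\pi_1(\Gamma^{0})$ available.
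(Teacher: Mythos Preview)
Your argument is correct and constitutes a genuinely different proof from the one in the paper. The paper does not use the covering-space approach here; instead it invokes the lifting criterion of Theorem~\ref{lifting-thm} and writes down an explicit nontrivial lift $\tilde f\colon G(K(\alpha,J))\to\Z[G\times\Z]\rtimes(G\times\Z)$, defined piecewise on $G(K\cup\alpha)$ and on $G(J)$ (via a suitable averaging element $\xi\in\Z[G\times\Z]$), and then verified to be nontrivial using precisely your fact~(2), phrased there as ``$f(E_J)$ is not generated by $f(m),f(\ell)$''. Your route instead reads the vanishing off geometrically: fact~(2) says each $\widetilde E_J$-piece has $[Q:A]\ge 2$ boundary tori, so the bipartite graph underlying the decomposition $\widetilde E=\widetilde E_{K\cup\alpha}\cup\widetilde E_J$ has infinitely many degree-$\ge 2$ vertices on one side against finitely many on the other, forcing $b_1(\Gamma^0)=\infty$ and hence $\mathrm{rank}_\Z H_1(\widetilde E)=\infty$. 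Your proof has the virtue of being self-contained (no appeal to the earlier paper's lifting theorem) and of continuing the template of Proposition~\ref{const-prop-1}; the paper's proof is shorter and produces an explicit algebraic witness. One small point worth tightening: the Euler-characteristic sentence for the infinite graph $\Gamma^0$ deserves a line of justification---for instance, for each of the infinitely many $\widetilde E_J$-vertices $v$ choose two incident edges to obtain a path between two of the finitely many $\widetilde E_{K\cup\alpha}$-vertices; these paths pass through pairwise distinct $v$'s and hence give linearly independent cycles in $H_1(\Gamma^0)$.
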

\begin{proof}
By the Seifert-van Kampen theorem, the knot group $G(K(\alpha, J))$ is the amalgamated free product $G(K \cup \alpha) *_{\pi_1(\partial E_J)} G(J)$. Let $m, \ell \in G(J)$ denote the meridian and the preferred longitude of $J$, and let $d_1, d_2$ be the orders of $f(m)$ and $f(\ell)$, respectively. We set
$$\xi = \left(\sum_{j=0}^{d_1-1}(f(m)^j,0)\right) \left(\sum_{j=0}^{d_2-1} (f(\ell)^j,0)\right) \in \mathbb{Z}[G \times \mathbb{Z}],$$
and define homomorphisms $\tilde{f}_{K \cup \alpha} \colon G(K \cup \alpha) \to \mathbb{Z}[G \times \mathbb{Z}] \rtimes (G \times \mathbb{Z})$ and $\tilde{f}_J \colon G(J) \to \mathbb{Z}[G \times \mathbb{Z}] \rtimes (G \times \mathbb{Z})$ by
\begin{align*}
\tilde{f}_{K \cup \alpha}(u) &= (0; f(u), \phi(u)) & (u \in G(K \cup \alpha)),\\
\tilde{f}_J(u) &= \bigl(((f(u),0) - (e,0))\xi; f(u), 0\bigr) & (u \in G(J)).
\end{align*}
Because $m$ commutes with $\ell$, $((f(u),0) - (e,0))\xi = 0$ for $u \in \pi_1(\partial E_J)$. Therefore we obtain a homomorphism $\tilde{f} \colon G(K(\alpha, J)) \to \mathbb{Z}[G \times \mathbb{Z}] \rtimes (G \times \mathbb{Z})$ by gluing $\tilde{f}_{K \cup \alpha}$ and $\tilde{f}_J$.

By the definitions of $\tilde{f}_{K \cup \alpha}$ and $\tilde{f}_J$, $\tilde{f}$ is a lift of $f \times \phi$ (remark that $\phi$ is zero on $G(J)$ since ${\rm lk}(K, \alpha) = 0$). Furthermore we should remark that the image $f(E_J) \subset G$ is not generated by the two elements $f(m), f(\ell)$; if generated, $f(E_J)$ is an abelian group of weight one and hence cyclic, which contradicts the assumption that $f$ does not factor through $\psi$. Therefore we can take $u_1 \in G(J)$ such that $f(u_1) \not\in \langle f(m), f(\ell) \rangle$. Let $u_2 \in G(K \cup \alpha)$ be a meridional loop of $K$ and $d_3, d_4$ the orders of $f(u_1 u_2), f(u_2) \in G$. Then, we have
\begin{eqnarray*}
\tilde{f}((u_1u_2)^{d_3 d_4} u_2^{-d_3 d_4}) &=& \bigl(((f(u_1),0) - (e, 0))\xi; f(u_1 u_2), 1\bigr)^{d_3 d_4} (0; e, -d_3 d_4)\\
&=& \left( \sum_{j=0}^{d_3 d_4 -1} (f(u_1u_2)^j, j) ((f(u_1),0) - (e, 0))\xi; e, 0 \right).
\end{eqnarray*} 
Here, we note that the product in $\mathbb{Z}[G \times \mathbb{Z}] \rtimes (G \times \mathbb{Z})$ is given by $(\lambda;x,a)(\lambda';x',a')=(\lambda+(x,a)\lambda';xx',a+a')$, where $\lambda,\lambda'\in \Z[G\times \Z], x,x'\in G$, and $a,a'\in\Z$. 
Since $f(u_1) \not\in \langle f(m), f(\ell) \rangle$, $\sum_{j=0}^{d_3 d_4 -1} (f(u_1u_2)^j, j) ((f(u_1),0) - (e, 0))\xi \neq 0$. Therefore $\tilde{f}$ is a nontrivial lift and we have $\Delta^{\rho \circ f}_{K(\alpha, J)}(t) = 0$ by Theorem \ref{lifting-thm}.
\end{proof}

\begin{remark}
If ${\rm lk}(K, \alpha) \neq 0$, then $\Delta_{K(\alpha, J)}^{\rho \circ f}(t) = 0$ if and only if $\Delta_{K\cup \alpha}^{\rho \circ f}(t) = 0$ or $\Delta_J^{\rho \circ f}(t) = 0$. 
In particular, if $f$ maps a preferred longitude of $J$ to the identity, e.g., if $f$ factors through $\psi$, then $f$ induces a homomorphism $f_0 \colon G(K) \to G$ and the condition $\Delta_{K\cup \alpha}^{\rho \circ f}(t) = 0$ is equivalent to $\Delta_K^{\rho \circ f_0}(t) = 0$. A proof of these facts is similar to that of Proposition \ref{const-prop-1}; a different point is that $H_1(\tilde{E}_{K \cup \alpha} \cap \tilde{E}_J)$ is of finite rank if ${\rm lk}(K, \alpha) \neq 0$.
\end{remark}

\subsection{Existence of hyperbolic knots for a given TAV group}\label{sec:4.1}

In the previous subsection, we constructed a knot admitting a given TAV group. 
The purpose of this subsection is to prove the following theorem: 

\begin{theorem}[Theorem \ref{thm:main-5}]\label{thm:hyperbolic}
For any TAV group $G$, there are infinitely many hyperbolic knots that admit $G$ as a TAV group. Moreover, if $G$ is realized as a smallest TAV group of a knot, then infinitely many hyperbolic knots also admit $G$ as a smallest TAV group. 
\end{theorem}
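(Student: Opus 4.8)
The plan is to upgrade the construction of Subsection~\ref{subsec:3.0} so that the ambient knot $K(\alpha,J)$ is hyperbolic, while not disturbing the epimorphism $f\colon G(K(\alpha,J))\to G$ that makes $\Delta^{\rho\circ f}_{K(\alpha,J)}(t)=0$. The point is that in the satellite construction the relevant homomorphism $f$ depends only on the pair $(K\cup\alpha, J)$ through the inclusion $E_{K\cup\alpha}\hookrightarrow E_{K(\alpha,J)}$ and the restriction $f_J\colon G(J)\to G$; it does not change if we modify $K$ far away from $\alpha$, or if we replace $J$ by another knot with the same relevant image data. So there are two independent ``knobs'' to turn to produce infinitely many examples.

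First I would fix $G$ (and, in the ``smallest TAV group'' case, fix a knot $K_0$ with $\ord(K_0)=|G|$ and an epimorphism $f_0\colon G(K_0)\to G$) and run the recipe of Subsection~\ref{subsec:3.0} once to obtain $K,\alpha,J$ and $f$ with $\Delta^{\rho\circ f}_{K(\alpha,J)}(t)=0$. Then I would arrange hyperbolicity. In case (i), $J$ ranges over knots with $\Delta_J(e^{2\pi i/d})=0$; there are infinitely many \emph{hyperbolic} such $J$ (for instance one can take suitable twist knots, or connected-sum-free hyperbolic knots obtained by $1/n$ surgery descriptions, whose Alexander polynomial one controls by a standard Seifert-matrix computation), and since $\mathrm{lk}(K,\alpha)=0$ the complement $E_{K(\alpha,J)}$ is a union of $E_{K\cup\alpha}$ and $E_J$ glued along a torus, so if both pieces are hyperbolic and the gluing is not along parallel cusps, Thurston's hyperbolic Dehn surgery / gluing theory gives that $K(\alpha,J)$ is hyperbolic provided $\alpha$ is chosen so that $E_{K\cup\alpha}$ is hyperbolic and the curve identifications are ``generic''. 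In case (ii) one instead varies $J$ through infinitely many hyperbolic knots admitting an epimorphism $f_J\colon G(J)\to H$ with $f_J(m)=f_0(\alpha)$, $f_J(\ell)=e$: such $J$ exist in abundance because $w(H)=1$ and one can realize $H$ as a quotient of a knot group by infinitely many distinct hyperbolic knots (e.g.\ via Hopf-plumbing-free Seifert surface modifications, or by appealing to the fact that every finitely generated weight-one group is a quotient of infinitely many hyperbolic knot groups). In either case, Propositions~\ref{const-prop-1} and~\ref{const-prop-2} guarantee $\Delta^{\rho\circ f}_{K(\alpha,J)}(t)=0$, so $G$ is a TAV group of each such $K(\alpha,J)$, and distinctness of infinitely many of them follows because, e.g., the Alexander polynomials or hyperbolic volumes differ.

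For the ``smallest'' refinement I would additionally need $\ord(K(\alpha,J))=|G|$, i.e.\ that no TAV group of strictly smaller order divides through $G(K(\alpha,J))$. Here I use Theorem~\ref{thm:main-1}(iv) together with the homomorphism $\psi\colon G(K(\alpha,J))\to G(K_0)$ produced by the satellite construction (taking the base knot to be $K_0$ itself): since $\psi$ is an epimorphism, $\ord(K(\alpha,J))\le\ord(K_0)=|G|$; and since $f$ realizes $G$ as a TAV group we get $\ord(K(\alpha,J))\le|G|$ in any case, while the reverse inequality $\ord(K(\alpha,J))\ge|G|$ must be argued by showing that any hypothetical smaller TAV group $G''$ of $K(\alpha,J)$ would pull back to a TAV group of $K_0$ of order $|G''|<|G|$, contradicting minimality of $G$ for $K_0$. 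That pull-back step is the delicate point: an epimorphism $G(K(\alpha,J))\to G''$ need not factor through $\psi$, so one must analyze, via the amalgamated-product decomposition $G(K\cup\alpha)*_{\pi_1(\partial E_J)}G(J)$ exactly as in the proof of Proposition~\ref{const-prop-2}, the possible images of $G(J)$ and show that either the image is cyclic (hence $f''$ factors through $\psi$ and we win) or the resulting twisted polynomial computation forces $|G''|\ge|G|$.

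The main obstacle I anticipate is precisely this last control of \emph{all} epimorphisms to \emph{all} small groups, i.e.\ ensuring that the satellite operation does not accidentally create a new, smaller TAV quotient; handling this cleanly will likely require choosing $K_0,\alpha,J$ with some care (for instance so that $G(J)$ has no unexpected small quotients, e.g.\ taking $J$ a torus knot $T(p,q)$ as in the Remark, whose quotients are well understood) and then a case analysis on the amalgamated product mirroring Propositions~\ref{const-prop-1}--\ref{const-prop-2}. The hyperbolicity itself is comparatively routine given Thurston's gluing theorem, once $E_{K\cup\alpha}$ is arranged to be hyperbolic, which one can do by a standard argument (the Whitehead-link-type choice of $\alpha$, or an appeal to the fact that a generic unknotted $\alpha$ with $\mathrm{lk}(K,\alpha)=0$ yields a hyperbolic link complement).
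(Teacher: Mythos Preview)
Your plan has a fatal geometric error: the satellite knots $K(\alpha,J)$ are \emph{never} hyperbolic when $J$ is nontrivial. The gluing torus $\partial E_J=\partial\nu(\alpha)$ is incompressible from the $E_J$ side because $J$ is knotted, and incompressible from the $E_{K\cup\alpha}$ side because $\alpha$ is homotopically nontrivial in $E_K$ (it must be, since $f_0(\alpha)$ normally generates the nontrivial group $H$). Hence this torus is essential in $E_{K(\alpha,J)}$, and a knot complement containing an essential torus is by definition not hyperbolic. Thurston's gluing theorem says exactly the opposite of what you claim: gluing two hyperbolic cusped pieces along a torus produces a manifold whose JSJ decomposition has at least two pieces. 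So varying $J$ over hyperbolic knots, or making $E_{K\cup\alpha}$ hyperbolic, gains you nothing.

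The paper's proof takes a completely different route, avoiding satellites altogether. One starts from any knot $K$ with $G$ as a TAV group, presents $K$ as the closure of a pseudo-Anosov $n$-braid $b$, and sets $K_k=\widehat{b^k}$ for suitable $k$. There is a quotient map $p_k\colon G(K_k)\to G(K)$, and results of Hillman--Livingston--Naik show that $\Delta_K^{\rho\circ f}$ divides $\Delta_{K_k}^{\rho\circ(f\circ p_k)}$, so $G$ is a TAV group of each $K_k$. Hyperbolicity comes from Thurston's hyperbolic Dehn surgery theorem applied to the mapping torus of $b$ (which is hyperbolic since $b$ is pseudo-Anosov): $S^3\setminus K_k$ is obtained as a cyclic branched cover of a filling, hence is hyperbolic for large $k$. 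For the ``smallest'' refinement, the key device---which your proposal lacks any analogue of---is to choose $k$ coprime to every integer up to $n\cdot\ord(K)^n$; then the Hurwitz action of $B_n$ on $H^n$ forces every homomorphism $G(K_k)\to H$ with $|H|<\ord(K)$ to factor through $p_k$, and a product formula shows the corresponding twisted polynomial is nonzero. Your attempt to pull back a hypothetical small TAV group along $\psi$ runs into exactly the obstruction you flag and then do not resolve: non-factoring epimorphisms exist, and nothing in the satellite decomposition controls them uniformly for \emph{all} small $G''$.
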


\begin{proof}
Let $K$ be a knot admitting $G$ as a TAV group and take an $n$-braid $b$ that presents $K$. Here, we may assume that $b$ is a pseudo-Anosov element in the braid group $B_n$. For example, if $n$ is prime, this assumption is satisfied; the primeness implies the irreducibility, and $b$ is not periodic since the closure $K$ is not a torus link. For a positive integer $k$ coprime to $n$, let $K_k$ be the knot obtained by taking the closure of the braid $b^k$. We first show that $G$ is a TAV group of $K_k$, and under the assumption $\ord(K) = |G|$ we prove that $\ord(K_k) = \ord(K)$ holds if $k$ is coprime to the natural numbers less than or equal to $n\cdot\ord(K)^n$. Finally, we see that $K_k$ is hyperbolic if $k$ is sufficiently large.

Let $p_k \colon G(K_k) \to G(K)$ denote the quotient map. For a surjective homomorphism $f \colon G(K) \to G$ with $\Delta_K^{\rho \circ f}(t) = 0$, $\Delta_{K_k}^{\rho \circ (f \circ p_k)}(t) = 0$ since $\Delta_K^{\rho \circ f}(t)$ is a factor of $\Delta_{K_k}^{\rho \circ (f \circ p_k)}(t)$ by \cite[Theorem~1]{HLN06-1}; $G$ is a TAV group of $K_k$.

Let us consider the case $\ord(K) = |G|$. As shown above, we have $\ord(K_k)\leq\ord(K)$ for any $k$.
Let us assume that $k$ is coprime to the natural numbers less than or equal to $n\cdot\ord(K)^n$ and show the other inequality.

We first claim that any homomorphism from $G(K_k)$ to a group $H$ with $|H| < \ord(K)$ factors through the quotient map $p_k \colon G(K_k) \to G(K)$. To see this, we consider the Hurwitz action of $B_n$ on $H^n$. We should recall that there is a one-to-one correspondence between the fixed point set of $b \;\text{(resp. $b^k$)} \in B_n$ and the set of the group homomorphisms from $G(K)$ (resp. $G(K_k)$) to $H$. By the assumption, $k$ is coprime to $|H^n|!$ and hence is coprime to the order of $b$. Therefore the fixed point set of $b$ is equal to that of $b^k$; in other words, any homomorphism from $G(K_k)$ to $H$ factors through $G(K)$.

Thus, any homomorphism $f_k: G(K_k) \to H$ is the composite of the quotient map $p_k$ and a homomorphism $f: G(K) \to H$ if $|H| < \ord(K)$. By \cite[Theorem~3]{HLN06-1}, there exists a polynomial $F(t, s) \in \mathbb{C}[t^{\pm 1}, s^{\pm 1}]$ such that
$$\Delta_{K_k}^{\rho \circ f_k}(t) = \Delta_K^{\rho \circ f}(t) \prod_{j = 1}^{k-1} F(t, \zeta_k^j),$$
where $\zeta_k \in \mathbb{C}$ is a primitive $k$-th root of unity. As in the proof of \cite[Lemma~19]{HLN06-1}, $\Delta_K^{\rho \circ f}(t) \neq 0$ implies that $F(t, s) \neq 0$. Moreover, the width of $F(t, s)$ with respect to $s$ is less than or equal to $n|H|$ since $F(t, s)$ is the two-variable twisted Alexander polynomial $\Delta_{K \cup \alpha}^{\rho \circ f}(t, s)$ in the sense of \cite{Wada94-1}, where $\alpha$ is a trivial knot component around $b$. Since $k$ is coprime to any natural number less than or equal to $n|H|$, we have $F(t, \zeta_k^j) \neq 0$ for $j = 1, \dots, k-1$ and hence $\Delta_{K_k}^{\rho \circ f_k}(t) \neq 0$, which concludes that $H$ is not a TAV group of $K_k$ if $|H| < \ord(K)$, i.e., $\ord(K_k) \geq \ord(K)$.

Let us show that the knot $K_k$ is hyperbolic for sufficiently large $k$ using Thurston's hyperbolic Dehn surgery theorem. We regard $b$ as a self-homeomorphism of the $(n+1)$-punctured sphere, and construct a mapping torus $M$ with monodromy $b$. Then, $M$ becomes a hyperbolic $3$-manifold, 
because $b$ is a pseudo-Anosov element in $B_n$. If we take a trivial loop $\alpha$ around $b$, then $K\cup\alpha$ becomes a hyperbolic link. Hence, for a sufficiently large $k\in\N$, the hyperbolic Dehn surgery theorem implies that $M_{(\infty,k/0)}$ admits the hyperbolic structure. Here, $(\infty,k/0)$ denotes the attaching slopes for $K$ and $\alpha$ respectively. For example, $M_{(\infty,1/0)}$ denotes $S^3\setminus K$. 

When $k>1$ the loop $\alpha$ becomes a cone singularity, but if we take the $k$-fold cyclic branched covering along $\alpha$, we can obtain the usual hyperbolic structure.  Then, the resulting manifold coincides with $S^3\setminus K_k$, and hence, $K_k$ is a hyperbolic knot as desired. 
\end{proof}

Using Theorems \ref{thm:main-2} and \ref{thm:hyperbolic}, we have the following corollary (see \cite[Corollary 1.4]{IMS23-1} for the case $\ord(K)=24$). 

\begin{corollary}\label{cor:hyperbolic}
There are infinitely many hyperbolic knots $K\in\mathcal{N}$ with $\ord(K)=24,60,96,120$, or $168$.
\end{corollary}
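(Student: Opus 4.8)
The plan is to deduce this immediately from Theorem \ref{thm:main-2} and the second half of Theorem \ref{thm:hyperbolic}. First I would fix one of the five target values $m\in\{24,60,96,120,168\}$. By Theorem \ref{thm:main-2}, there is a prime knot $K$ (for instance $9_{35}$ for $m=24$, $10_{67}$ for $m=60$, $10_{166}$ for $m=96$, $8_{15}$ for $m=120$, $7_4$ for $m=168$) with $\ord(K)=m$. Since $\ord(K)$ is finite, $K$ is non-fibered, and by definition of $\ord(K)$ there is a TAV group $G$ of $K$ with $|G|=m$ that is a smallest TAV group of $K$; in particular $G$ is realized as a smallest TAV group of a knot.

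Next I would invoke the second assertion of Theorem \ref{thm:hyperbolic} (Theorem \ref{thm:main-5}): for this $G$, which is realized as a smallest TAV group of a knot, there are infinitely many hyperbolic knots that admit $G$ as a smallest TAV group. For each such hyperbolic knot $K'$ we then have $\ord(K')=|G|=m$, and since $\ord(K')<\infty$ the knot $K'$ lies in $\mathcal{N}$. Running this over the five values of $m$ gives the claim. The case $m=24$ recovers \cite[Corollary 1.4]{IMS23-1}.

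There is essentially no obstacle here beyond making sure the hypotheses of Theorem \ref{thm:hyperbolic} are met: the only thing to verify is that each target order is actually attained as $\ord(K)$ for some knot, which is exactly the content of Theorem \ref{thm:main-2}(i)--(v), and that "smallest TAV group'' in the sense used in Theorem \ref{thm:hyperbolic} matches the group whose order equals $\ord(K)$, which is immediate from the definition of $\ord$. Thus the corollary is a formal consequence, and I would present it in a few lines.
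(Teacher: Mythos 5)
Your proposal is correct and is essentially the paper's own argument: the corollary is stated as an immediate consequence of Theorem \ref{thm:main-2} (which supplies knots realizing each of the five orders) combined with the second assertion of Theorem \ref{thm:hyperbolic}. The bookkeeping you note — that a smallest TAV group of $K$ has order $\ord(K)$ and that finiteness of $\ord$ places the resulting hyperbolic knots in $\mathcal{N}$ — is exactly what is needed and is immediate from the definitions.
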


Now let us consider the subset $\mathcal{V}(G)\subset\mathcal{K}$ for a given TAV group $G$: 
$$
\mathcal{V}(G):=
\{K\in\mathcal{K}\,|\,\text{$K$ admits $G$ as a TAV group}\}.
$$
By Theorem \ref{thm:hyperbolic}, we see that there exist infinitely many hyperbolic knots in $\mathcal{V}(G)$. Since the cardinality of $\mathcal{V}(G)$ is infinity for any TAV group $G$, it is of particular interest to determine which knots are contained in the set $\mathcal{V}(G)$, and to characterize 
$\mathcal{V}(G)$ for each TAV group $G$. To this end, we introduce a new concept associated with TAV groups. 

Let $c(K)$ be the crossing number of $K$, 
namely, the smallest number of crossings of any diagram of the knot $K$. 
We define the \textit{crossing number of a TAV group} $G$ to be the minimal crossing number of knots $K$ contained in $\mathcal{V}(G)$; 
$$
c(G):=\min_{K\in\mathcal{V}(G)}\{c(K)\}.
$$
We easily see that $c(G)\geq 5$ holds for any TAV group $G$, because the knots with four or fewer crossings are fibered.  

By computer-aided calculation, we can determine $c(G)$ for some TAV groups $G$ of order less than $201$ (see Tables 1 to 3). In the next subsection, we discuss another invariant of a TAV group $G$ called the bridge number of $G$. 

\subsection{TAV orders of 2-bridge knots}\label{sec:4.3}

The \textit{bridge number} $b(K)$ is an invariant of a knot $K$ defined as the minimal number of bridges required in all the possible bridge representations of a knot.

Let $K=S(b,a)$ be a $2$-bridge knot given by the Schubert normal form, where $a$ and $b$ are coprime integers $(|a|<b)$, and $b>0$ is odd (see \cite{BZH14-1}). It is known that $S(b,a)$ and $S(b',a')$ are equivalent if and ony if $b'=b$ and $a'\equiv a^{\pm1}$ (mod $b$), and that $S(b,-a)$ gives the mirror image of $S(b,a)$. 

The knot group of $K=S(b,a)$ has a presentation $G(K)=\langle u,v\, |\, w u =v w\rangle$ where 
$w=u^{\epsilon_1}v^{\epsilon_2}\cdots u^{\epsilon_{b-2}}v^{\epsilon_{b-1}}$ and 
$\epsilon_j=(-1)^{\left[\frac{a}{b}j\right]}$. 
Here 
we write $[r]$ for the greatest integer less than or equal to $r\in\Bbb{R}$. 
It is easily checked that 
$\epsilon_j=\epsilon_{b-j}$ holds. 
The above presentation is not unique for a $2$-bridge knot $K$ itself, 
but the existence of at least one such presentation follows from 
Wirtinger's algorithm applied to the Schubert normal form of $S(b,a)$. 
The generators $u$ and $v$ come from the two overpasses and present the 
meridian of $S(b,a)$ up to conjugation. 

In this subsection, we show the following theorem. 

\begin{theorem}[Theorem \ref{thm:main-7}]\label{thm:S_4}
The symmetric group $S_4$ is not a TAV group of any $2$-bridge knot. 
In other words, $\mathcal{V}(S_4)\cap \{\text{$2$-bridge knots}\}=\emptyset$. 
\end{theorem}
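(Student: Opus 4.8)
The plan is to rule out the existence of an epimorphism $f\colon G(K)\to S_4$ with vanishing twisted Alexander polynomial by analyzing what such an $f$ can look like on the two-generator presentation $G(K)=\langle u,v\mid wu=vw\rangle$ of a $2$-bridge knot $K=S(b,a)$. The key structural fact is that $u$ and $v$ are both meridians, hence conjugate in $G(K)$, so $f(u)$ and $f(v)$ are conjugate elements of $S_4$ that together generate $S_4$ (since $f$ is onto). The conjugacy classes of $S_4$ are represented by $e$, a transposition, a $3$-cycle, a double transposition, and a $4$-cycle; of these only transpositions and $4$-cycles can generate $S_4$ together with a conjugate, so $f(u),f(v)$ are both transpositions or both $4$-cycles. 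I would first dispose of the $4$-cycle case: two $4$-cycles generate a subgroup of order a power of $2$ contained in a Sylow $2$-subgroup (dihedral of order $8$) only when they fail to generate $S_4$ — one checks directly that any two $4$-cycles lie in a common Sylow $2$-subgroup, so they cannot generate $S_4$. Hence $f(u)$ and $f(v)$ must both be transpositions.

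Next I would use the vanishing criterion. By Theorem~\ref{lifting-thm} (or equivalently Remark~\ref{rmk:torsion}), $\Delta_K^{\rho\circ f}(t)=0$ is equivalent to the existence of a nontrivial lift $\tilde f\colon G(K)\to \mathbb{Z}[S_4\times\mathbb{Z}]\rtimes(S_4\times\mathbb{Z})$ of $f\times\phi$, or to $H_1(E_K;\mathbb{Q}[S_4][t^{\pm1}])$ not being $\mathbb{Q}[t^{\pm1}]$-torsion. The decomposition of the regular representation of $S_4$ into irreducibles, combined with Fox calculus on the one-relator presentation $\langle u,v\mid wu v^{-1}w^{-1}\rangle$, shows that $\Delta_K^{\rho\circ f}(t)$ is (up to units) the product of the twisted Alexander polynomials for the irreducible summands; each such factor is the order of a cokernel of a single $1\times1$ Fox-derivative matrix over $R[t^{\pm1}]$, so vanishing of the whole product forces vanishing of one irreducible factor. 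Since $f(u)=f(v)$ up to conjugacy and both are transpositions, I would pin down (again a finite check, using that all transpositions are conjugate and that $S_4$ has a unique conjugacy class of such pairs up to the automorphisms of $S_4$ induced by conjugation) that the pair $(f(u),f(v))$ is determined up to simultaneous conjugation by a single parameter, essentially whether $f(u)=f(v)$ or $f(u)\neq f(v)$; but $f(u)=f(v)$ would make the image abelian, contradicting surjectivity, so $f(u)\neq f(v)$ and the pair is conjugate to, say, $((12),(13))$ or $((12),(34))$ — and the latter generates only the Klein four group, so in fact $(f(u),f(v))\sim((12),(13))$, which generates the $S_3$ on $\{1,2,3\}$, \emph{not} all of $S_4$. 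This is the crux: surjectivity onto $S_4$ is impossible, because two conjugate transpositions always generate a subgroup isomorphic to $S_3$ (when they do not commute) or to $C_2\times C_2$ (when they do), never $S_4$.

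Therefore no epimorphism $G(K)\to S_4$ exists at all for a $2$-bridge knot $K$, which makes the vanishing question vacuous and proves $\mathcal{V}(S_4)\cap\{\text{$2$-bridge knots}\}=\emptyset$. The main obstacle — and the part deserving care — is the group-theoretic step: confirming that $f(u),f(v)$ must lie in a single conjugacy class (the meridian argument), enumerating which conjugacy classes admit a pair of conjugates generating $S_4$, and checking that the transposition and $4$-cycle classes both fail. The $4$-cycle case needs the observation that any two $4$-cycles of $S_4$ lie in a common Sylow $2$-subgroup; the transposition case needs that two transpositions generate either $S_3$ or $C_2\times C_2$. Both are elementary finite computations but must be stated cleanly, since the whole theorem reduces to them. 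I would present this as a short lemma (\emph{no two conjugate elements of $S_4$ generate $S_4$}), prove it by the case analysis above, and then conclude immediately, noting for contrast (as the paper already flags in Remark~\ref{rmk:S_4}) that for bridge number $\geq 3$ one has more generators and the obstruction disappears.
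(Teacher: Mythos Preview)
Your argument contains a fatal group-theoretic error. The claim that ``any two $4$-cycles of $S_4$ lie in a common Sylow $2$-subgroup'' is false: $S_4$ has three Sylow $2$-subgroups, each dihedral of order $8$, and each contains exactly two $4$-cycles (a $4$-cycle and its inverse). Hence two $4$-cycles that are not mutually inverse lie in \emph{different} Sylow $2$-subgroups. Concretely, $(1234)$ and $(1243)$ generate $S_4$: one computes $(1234)(1243)^{-1}=(143)$, and a subgroup containing a $3$-cycle and the odd permutation $(1234)$ has order divisible by $12$ but is not contained in $A_4$, hence equals $S_4$.

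Consequently your conclusion that no epimorphism $G(K)\to S_4$ exists for a $2$-bridge knot is wrong. For instance, the trefoil $3_1=S(3,1)$ has $w=uv$ and relation $uvu=vuv$; with $u\mapsto(1234)$ and $v\mapsto(1243)$ one checks directly that $(1234)(1243)(1234)=(34)=(1243)(1234)(1243)$, so $G(3_1)$ surjects onto $S_4$. Your analysis of the identity, $3$-cycle, double-transposition, and transposition classes is correct, and in fact the \emph{only} way two conjugate elements can generate $S_4$ is via $4$-cycles --- this is precisely the case the paper must treat, not one that can be discarded.

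The paper's proof proceeds from exactly this point: assuming an epimorphism $f$ with $f(u)=(1234)$, $f(v)=(1243)$, it handles the $1$- and $2$-dimensional irreducible constituents of the regular representation easily, then reduces the $3$-dimensional standard representation modulo $2$ and shows via Fox calculus that the highest (or lowest) nonzero coefficient of $\Psi\big((1-v)\frac{\partial w}{\partial v}-1\big)$ is forced to lie in $\tau'(A_4)$ or a coset thereof, hence is invertible in $\mathrm{M}(2,\mathbb{Z}/2\mathbb{Z})$. So nonvanishing is established by a genuine computation with the representation, not by ruling out the homomorphism. Your shortcut cannot be repaired along the lines you suggest.
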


As shown in Table 1, the TAV group of order $24$ is only the symmetric group $S_4$. Hence, Theorems \ref{thm:main-1}(i) and \ref{thm:S_4} imply the following corollary. 

\begin{corollary}\label{cor:S_4}
For any $2$-bridge knot $K$, $\ord(K)>24$ holds. 
\end{corollary}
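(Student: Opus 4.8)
The plan is to use Theorem \ref{lifting-thm} to convert the vanishing of $\D_K^{\rho\circ f}(t)$ for a putative epimorphism $f\colon G(K)\to S_4$ into the existence of a nontrivial lift $\tilde f\colon G(K)\to \Z[S_4\times\Z]\rtimes(S_4\times\Z)$ of $f\times\phi$, and then derive a contradiction from the very special two-generator presentation $G(K)=\langle u,v\mid wu=vw\rangle$ of a $2$-bridge knot. Since $u,v$ are both meridians, $\phi(u)=\phi(v)=1$, and since $f$ is onto $S_4$, the pair $(x,y)=(f(u),f(v))$ is a pair of conjugate elements (they are conjugate via $w$) that generates $S_4$. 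First I would enumerate, up to the $\mathrm{Aut}(S_4)$-action, the possible such pairs: because $u$ and $v$ are conjugate meridians, $x$ and $y$ lie in a common conjugacy class, and a generating pair of conjugate elements must consist of two transpositions or two $4$-cycles (a pair from the class of double transpositions generates only $V_4$, and $3$-cycles generate only $A_4$). So the analysis splits into these two cases.

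Next, writing a lift as $\tilde f(u)=(\mu;x,1)$ and $\tilde f(v)=(\nu;y,1)$ with $\mu,\nu\in\Z[S_4\times\Z]$, I would translate the single relator $wu=vw$ into an equation in $\Z[S_4\times\Z]$. Setting $\tilde f(w)=(\omega;w_0,\ell)$ where $w_0=f(w)$ and $\ell=\phi(w)$, the relator becomes a linear equation of the form $(\text{something})\cdot\mu + (\text{coefficient involving }\omega) = (\text{something})\cdot\nu + \dots$, i.e. a first-order "Fox-derivative" type condition. Because $G(K)$ has deficiency one with a single relator $r=wuw^{-1}v^{-1}$, the lift $\tilde f$ exists iff this one linear equation over $\Z[S_4\times\Z]$ is solvable, and it is \emph{nontrivial} iff one can arrange $\mathrm{Im}\,\tilde f$ to contain a nonzero element of $\Z[S_4\times\Z]\times\{(e,0)\}$; concretely, one shows that the space of lifts is an affine space over the $\Z[S_4\times\Z]$-module of solutions to the homogeneous equation (the "$t$-twisted" first homology), and nontriviality of some lift is equivalent to that solution module having positive rank over $\Z$ — which is exactly the statement $\D_K^{\rho\circ f}(t)=0$, so this is consistent with Remark \ref{rmk:torsion}. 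The real content is then a rank computation for a single matrix over $\Q[S_4][t^{\pm1}]$, namely the Fox derivative $\partial r/\partial u$ (equivalently $\partial r/\partial v$, since the sum of Fox derivatives times $(t^{\phi}-1)$ vanishes), pushed through $\rho\circ f$.

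So the crux, which I expect to be the main obstacle, is to show that for \emph{every} admissible generating pair $(x,y)$ in $S_4$ — the transposition case and the $4$-cycle case, with the finitely many choices of $w_0=f(w)$ that are compatible with $wu=vw$ (which forces $w_0 x w_0^{-1}=y$) — the relevant twisted homology $H_1(E_K;\Q[S_4][t^{\pm1}])$ is $\Q[t^{\pm1}]$-torsion, i.e. $\D_K^{\rho\circ f}(t)\neq 0$. I would approach this by exploiting the symmetry $\epsilon_j=\epsilon_{b-j}$ of the relator word together with the decomposition of the regular representation of $S_4$ into irreducibles: the twisted Alexander polynomial factors (up to units) as a product over the irreducible representations $\sigma$ of $S_4$ of $\D_K^{\sigma\circ f}(t)$ to the power $\dim\sigma$, and one must check each factor is nonzero. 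For the trivial and sign representations these are evaluations of the ordinary Alexander polynomial $\D_K(t)$ and $\D_K(-t)$ (via the index-two subgroup $A_4$), which are nonzero because $\D_K(t)$ has nonzero constant term for a $2$-bridge knot. For the $2$- and $3$-dimensional irreducibles the key point is a deficiency/Euler-characteristic argument: $E_K$ has the homotopy type of a $2$-complex with one $0$-cell, two $1$-cells, one $2$-cell, so $H_1(E_K;\Q[S_4]^{}[t^{\pm1}])$ being non-torsion would force a nonzero element in $H_0$ as well by the $\Z[G(K)]$-chain complex being $0\to \Q[S_4][t^{\pm1}]\to\Q[S_4][t^{\pm1}]^2\to\Q[S_4][t^{\pm1}]\to 0$, and one shows the boundary maps have the right rank by a direct inspection of the $|S_4|\times|S_4|$ (block) matrix $\rho(f(w))$-conjugation structure — ultimately reducing to the statement that the mapping class element acting as $x\mapsto y$ has no invariant vector in the relevant representation. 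I expect the transposition case to be the delicate one, since there $f$ can fail to be surjective in subtle sub-cases and one must also rule out $\mathrm{Im}\,f=S_4$ arising with $w_0$ central-like behavior; handling the $4$-cycle case should be easier because $\langle x\rangle$ already has index $6$ and the conjugating relation $w_0xw_0^{-1}=y$ pins down $w_0$ up to a small coset.
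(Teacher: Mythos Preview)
Your setup is sound --- decompose the regular representation of $S_4$ into irreducibles and handle each factor --- and the $1$-dimensional pieces are indeed harmless. The real gap is in your treatment of the $2$- and $3$-dimensional irreducibles: the ``deficiency/Euler-characteristic'' argument you sketch is circular. From the one-relator $2$-complex you get $0\to V[t^{\pm1}]\xrightarrow{\partial_2}V[t^{\pm1}]^2\xrightarrow{\partial_1}V[t^{\pm1}]\to 0$, and yes, $H_0$ is $\Q[t^{\pm1}]$-torsion since $\rho(u)t-1$ is invertible over $\Q(t)$. But the Euler-characteristic relation only gives $\mathrm{rank}_{\Q(t)}H_1=\mathrm{rank}_{\Q(t)}H_2$, and $H_2=\ker\partial_2$ is torsion precisely when $\det\Psi(\partial r/\partial v)\neq 0$ --- which \emph{is} the nonvanishing you are trying to prove. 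Your ``no invariant vector'' criterion controls $H_0$, not $H_2$. Put differently, if this argument worked it would apply to every $2$-bridge knot and every finite-image representation, contradicting the fact that non-fibered $2$-bridge knots such as $7_4$ or $8_3$ \emph{do} admit TAV groups by Theorem~\ref{thm:FV}. (A side error: two conjugate transpositions in $S_4$ generate at most an $S_3$, so the transposition case never arises; only $4$-cycles occur as $(f(u),f(v))$.)

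The paper's argument for the $3$-dimensional standard representation $\tau$ is where the content lies, and it is specific to $S_4$. One reduces $\tau$ modulo~$2$; after a change of basis the reduction is block upper-triangular with a trivial $1\times1$ block and a $2\times2$ block $\tau'\colon S_4\to\mathrm{GL}(2,\Z/2\Z)$ conjugate to the mod~$2$ reduction of the $2$-dimensional irreducible lifted from $S_3$. The key fact is that $\tau'(A_4)\cup\{0\}\subset \mathrm{M}(2,\Z/2\Z)$ is closed under addition (it is a copy of $\F_4$), so every nonzero $\Z/2\Z$-linear combination of elements of $\tau'(A_4)$ lies again in $\tau'(A_4)\subset\mathrm{GL}(2,\Z/2\Z)$. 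Hence the extremal nonzero $t$-coefficient of $\Psi\bigl((1-v)\tfrac{\partial w}{\partial v}-1\bigr)$, being such a sum (in even degree) or a coset translate of one (in odd degree), is invertible, forcing the determinant to be nonzero over $\Z/2\Z[t^{\pm1}]$. The other $3$-dimensional irreducible is the tensor with the sign character and follows the same way. This mod~$2$ trick is the missing idea in your proposal.
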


\begin{remark}\label{rmk:S_4}
We can show that for any integer $n\geq3$, there are infinitely many non-fibered 
$n$-bridge knots $K$ with $\ord(K)=24$. In fact, we can make such knots by connected sum. 
\end{remark}

We define the \textit{bridge number of a TAV group} $G$ to be the minimal bridge number of knots $K$ contained in $\mathcal{V}(G)$; $b(G):=\min_{K\in\mathcal{V}(G)}\{b(K)\}$. Since every nontrivial knot has bridge number at least two, $b(G)\geq 2$ holds for any TAV group $G$. As an immediate corollary of Theorem \ref{thm:S_4} and Remark \ref{rmk:S_4}, we have 

\begin{corollary}
$b(S_4)=3$.
\end{corollary}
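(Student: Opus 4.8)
The plan is to combine the lower bound coming from Theorem~\ref{thm:S_4} with an explicit upper bound coming from Remark~\ref{rmk:S_4}. First, I would observe that $S_4$ is a TAV group (it is $A_4 \rtimes C_2$ with $n = 4 \geq 4$, as listed among the examples in Subsection~\ref{subsec:2.2}), so $\mathcal{V}(S_4) \neq \emptyset$ and $b(S_4)$ is well defined. By Theorem~\ref{thm:S_4}, no $2$-bridge knot lies in $\mathcal{V}(S_4)$, so every $K \in \mathcal{V}(S_4)$ has $b(K) \geq 3$; taking the minimum gives $b(S_4) \geq 3$. For the reverse inequality, I would invoke Remark~\ref{rmk:S_4}: there exist (infinitely many) non-fibered $3$-bridge knots $K$ with $\ord(K) = 24$. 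For such a $K$, the smallest TAV group has order $24$, and since by Table~1 the only TAV group of order $24$ is $S_4$, it follows that $S_4$ is a (smallest) TAV group of $K$, i.e.\ $K \in \mathcal{V}(S_4)$. Hence $b(S_4) \leq b(K) = 3$. Combining the two inequalities yields $b(S_4) = 3$.

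The only point that requires a word of care is the passage from $\ord(K) = 24$ to ``$S_4 \in \mathcal{V}(S_4)$ for this $K$'': this is exactly the statement that the minimal TAV group of $K$ has order $24$, and that a TAV group of $K$ realizing this minimal order must be $S_4$ because $S_4$ is the unique group of order $24$ satisfying the criterion of Theorem~\ref{thm:main-4} (weight one and non-$p$-group commutator subgroup), as recorded in Table~1. So the knots produced by the connected-sum construction of Remark~\ref{rmk:S_4} genuinely witness $K \in \mathcal{V}(S_4)$ with $b(K) = 3$.

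The main (and essentially only) obstacle is that this corollary rests entirely on the two non-trivial inputs already established, namely Theorem~\ref{thm:S_4} (the $2$-bridge obstruction) and the existence claim in Remark~\ref{rmk:S_4} (construction of $3$-bridge knots with TAV order $24$). Granting those, the proof is a two-line squeeze, and I would present it exactly as such: lower bound from Theorem~\ref{thm:S_4}, upper bound from Remark~\ref{rmk:S_4}, and conclude $b(S_4) = 3$.
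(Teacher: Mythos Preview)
Your argument is correct and is exactly the two-line squeeze the paper has in mind: the lower bound from Theorem~\ref{thm:S_4} (together with the fact that only the unknot has bridge number one, and it is fibered) and the upper bound from Remark~\ref{rmk:S_4} via the uniqueness of $S_4$ as the TAV group of order $24$. One trivial slip: in your second paragraph you wrote ``$S_4 \in \mathcal{V}(S_4)$'' where you meant ``$K \in \mathcal{V}(S_4)$''.
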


\begin{proof}[Proof of Theorem \ref{thm:S_4}]
Let $K$ be a $2$-bridge knot. We fix a presentation 
$G(K)=\la u,v\,|\,wu=vw\ra$, where $w$ is a word in $u$ and $v$. 
The group $S_4$ has 
\begin{itemize}
\item[(1)] two $1$-dimensional irreducible representations, 
\item[(2)] one $2$-dimensional irreducible representation which is lifted from $S_3$, and 
\item[(3)] two $3$-dimensional irreducible representations. 
\end{itemize}
Non-vanishing of the twisted Alexander polynomial in the cases (1) and (2) is easy consequence of \cite[Example 2.6]{IMS23-1} and Theorem 1.3(i), so that we consider the $3$-dimensional irreducible representations.  

Let $\tau\colon S_4\to \mathrm{GL}(3,\Z)$ be the standard representation. We suppose that there is an epimorphism $f\colon G(K)\to S_4$. Then we may assume that 
$$
\tau\circ f(u)
=\tau(1234)=
\begin{pmatrix}
-1&1&0\\
-1&0&1\\
-1&0&0
\end{pmatrix}~\text{and}~
\tau\circ f(v)
=\tau(1243)=
\begin{pmatrix}
-1&1&0\\
-1&1&-1\\
0&1&-1
\end{pmatrix}.
$$
Putting 
$A=\tau\circ f(u)$ and $B=\tau\circ f(v)$, we obtain 
$$
P^{-1}AP
=
\begin{pmatrix}
1&1&0\\
0&0&1\\
0&1&0
\end{pmatrix},~
P^{-1}BP
=
\begin{pmatrix}
1&1&0\\
0&1&1\\
0&0&1
\end{pmatrix}
(\mathrm{mod}~ 2)~\text{for}~
P=
\begin{pmatrix}
1&0&0\\
0&1&0\\
1&0&1
\end{pmatrix}.
$$
If we define $\tau'\colon S_4\to \mathrm{GL}(2,\Z/2\Z)$ by 
$(1234)\mapsto {\tiny \begin{pmatrix}0&1\\1&0\end{pmatrix}}$, and 
$(1243)\mapsto {\tiny \begin{pmatrix}1&1\\0&1\end{pmatrix}}$, 
then $\tau'$ is conjugate to the mod $2$ reduction of the $2$-dimensional irreducible representation of $S_4$ which is lifted from $S_3$. 
Hence, the corresponding twisted Alexander polynomial is nonzero, because of the following argument. 

By the Fox derivative, $\Delta^{\tau' \circ f}_K(t) \equiv 0$ (mod $2$) is equivalent to the vanishing of
$$\det\Psi\left(\frac{\partial}{\partial v}(wuw^{-1}v^{-1})\right) = \det\Psi\left((1-v)\frac{\partial w}{\partial v} - 1\right).$$
Here, 
$\Psi$ is the homomorphism defined by 
$\Psi=(\tau'\circ f)\otimes\phi\colon \Z[F]\to \Z[S_4\times \Z]\to\mathrm{M}(2,\Z/2\Z)\otimes\Z[t^{\pm1}]$, 
where $F=\la u,v\ra$ is the free group of rank $2$ generated by $u,v$. 

If $\Psi\big(\frac{\partial w}{\partial v}\big) = 0$, the determinant above is equal to $1$ and hence $\Delta^{\tau' \circ f}_K(t) \not\equiv 0$ (mod $2$). If $\Psi\big(\frac{\partial w}{\partial v}\big) \neq 0$, $\Psi\big((1-v)\frac{\partial w}{\partial v}\big)$ has positive width with respect to $t$ and at least one of the highest and the lowest degrees is not zero; take a nonzero end degree $d$ and let $\xi \in {\rm M}(2,\mathbb{Z}/2\mathbb{Z})$ be the coefficient of $t^d$. Since
$$\tau'(A_4) = \left\{\begin{pmatrix} 1 & 0 \\ 0 & 1 \end{pmatrix}, \begin{pmatrix} 1 & 1 \\ 1 & 0 \end{pmatrix}, \begin{pmatrix} 0 & 1 \\ 1 & 1 \end{pmatrix} \right\}$$
for the alternating group $A_4$, any nonzero linear-sum of elements of $\tau'(A_4)$ is also contained in $\tau'(A_4)$. Therefore $\xi \in \tau'(A_4)$ if $d$ is even and $\xi \in {\tiny \begin{pmatrix} 0 & 1 \\ 1 & 0 \end{pmatrix}} \tau'(A_4)$ otherwise. In either case, $\xi$ has nonzero determinant and hence $\det\Psi\big((1-v)\frac{\partial w}{\partial v} - 1\big) \neq 0$.

Since the other $3$-dimensional representation is the tensor product of  alternating and standard representations, the corresponding twisted Alexander polynomial is also nonzero. This completes the proof of Theorem~\ref{thm:S_4}. 
\end{proof}

To conclude this section, we pose the following question:

\begin{question}
What is the minimum of $\ord(K)$ for the $2$-bridge knots $K$?
\end{question}

\section{TAV groups of order $pqr$}\label{section:pqr}
In this section, we consider the TAV groups whose orders are the products of at most three primes and especially give a concrete example of a knot admitting a TAV group $G$ of order $pqr$, where $p, q, r$ are distinct primes.

Let us construct some TAV groups. Let $p, q, r$ be distinct primes with $p < q < r$. We assume $q \equiv r \equiv 1 \;(\text{mod $p$})$ and take integers $a$ and $b$ that define elements of order $p$ in the multiplicative groups $(\mathbb{F}_q)^\times$ and $(\mathbb{F}_r)^\times$, respectively: We assume $a^p \equiv 1 \;(\text{mod $q$})$, $a \not\equiv 1 \;(\text{mod $q$})$, $b^p \equiv 1 \;(\text{mod $r$})$, and $b \not\equiv 1 \;(\text{mod $r$})$. The $a$-th and the $b$-th powers define automorphisms on the cyclic groups $C_q$ and $C_r$, and let $G(pqr;a,b)$ be the semidirect product $(C_q \times C_r) \rtimes C_p$ with respect to these automorphisms, i.e.,
$$G(pqr;a,b) = \langle x, y, z \mid x^q, y^r, z^p, xyx^{-1}y^{-1}, zxz^{-1}x^{-a}, zyz^{-1}y^{-b} \rangle.$$
We can easily check that $G(pqr;a, b)$ is a TAV group, because its commutator subgroup is $C_q\times C_r\cong C_{qr}$.

\begin{proposition}
Let $n$ be the product of at most three primes.
\begin{itemize}
\item[(i)] If $n$ has at most two prime divisors, there does not exist a TAV group of order $n$.
\item[(ii)] Let $n$ be the product $pqr$ of distinct primes $p, q, r$ with $p < q < r$. There exists a TAV group of order $n$ if and only if $q \equiv r \equiv 1 \;(\text{mod $p$})$. In this case, there exist exactly $p-1$ TAV groups
$$G(pqr; a, b^i) \qquad (i = 1, \dots, p-1)$$
of order $n$, where $a$ and $b$ are integers that define elements of order $p$ in the multiplicative groups $(\mathbb{F}_q)^\times$ and $(\mathbb{F}_r)^\times$, respectively.
\end{itemize}
\end{proposition}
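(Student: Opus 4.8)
The plan is to combine Theorem~\ref{thm:main-4} (the TAV characterization) with the classical Hölder--Burnside classification of groups of order $pqr$. Recall that by Theorem~\ref{thm:main-4}, a finite group $G$ is a TAV group precisely when $w(G)=1$ and $G'$ is not a $p$-group. For part~(i), if $n$ is $1$, a prime $p$, a prime power $p^2$, $p^3$, or a product $pq$ of two distinct primes, then every group $G$ of order $n$ has $G'$ equal to either the trivial group or a $p$-group: in the prime-power cases $G$ itself is a $p$-group; in the $pq$ case (say $p<q$) the Sylow $q$-subgroup is normal and $G/C_q$ is cyclic of order $p$, so $G'\subseteq C_q$ is a $q$-group. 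Hence no group of order $n$ can be a TAV group, and (i) follows. Note this also disposes of any ``$pqr$'' that is secretly $p^2 q$ or $p^3$, so in (ii) we may genuinely assume $p,q,r$ distinct.

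For part~(ii), first I would run through the structure theory of a group $G$ of order $pqr$ with $p<q<r$. By counting Sylow subgroups one shows the Sylow $r$-subgroup $C_r$ is always normal; moreover $C_q\cdot C_r=C_{qr}$ is normal (of index $p$), so $G=(C_q\times C_r)\rtimes C_p$ for some action. The commutator subgroup $G'$ is then contained in $C_{qr}=C_q\times C_r$, and $G$ is a TAV group iff $G'$ is neither trivial nor a $q$-group nor an $r$-group, i.e.\ iff $G'$ surjects onto both $C_q$ and $C_r$; equivalently, iff $C_p$ acts nontrivially on $C_q$ \emph{and} nontrivially on $C_r$. A nontrivial action of $C_p$ on $C_q$ exists iff $p\mid q-1$, and similarly for $C_r$; this gives the stated necessary and sufficient condition $q\equiv r\equiv 1\pmod p$. (One should also observe that $C_q$ could in principle admit an action of $C_r$ when $r\mid q-1$, but since $q<r$ this never happens, so the semidirect-product description above is exhaustive.)

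It remains to enumerate, up to isomorphism, the TAV groups of order $pqr$ under the hypothesis $q\equiv r\equiv 1\pmod p$. A homomorphism $C_p\to \mathrm{Aut}(C_q\times C_r)=\mathrm{Aut}(C_q)\times\mathrm{Aut}(C_r)=(\mathbb{F}_q)^\times\times(\mathbb{F}_r)^\times$ is determined by a pair $(\bar a,\bar b)$ of elements of order dividing $p$, and we need both coordinates to have order exactly $p$. Fixing generators, $\bar a$ ranges over the $p-1$ nontrivial $p$-th roots of unity in $(\mathbb{F}_q)^\times$ and $\bar b$ over those in $(\mathbb{F}_r)^\times$, giving $(p-1)^2$ actions $a\mapsto a$, $b\mapsto b^i$ after normalizing. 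The standard argument—either pre-composing with an automorphism of the acting $C_p$ (which cyclically permutes the possible $\bar a$'s and simultaneously the $\bar b$'s), or post-composing with an automorphism of $C_q$—shows that $G(pqr;a,b^i)\cong G(pqr;a^j,b^{ij})$ for all $j\in(\mathbb{Z}/p)^\times$, so we may always normalize the first coordinate to $a$; and then $G(pqr;a,b^i)\cong G(pqr;a,b^{i'})$ iff $i\equiv i'$, because the isomorphism type records the ``ratio'' of the two actions. This leaves exactly $p-1$ pairwise non-isomorphic groups $G(pqr;a,b^i)$, $i=1,\dots,p-1$, each a TAV group by the computation $G'=C_q\times C_r\cong C_{qr}$ already noted in the text.

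The main obstacle is the isomorphism bookkeeping in the last step: showing that the $(p-1)^2$ semidirect products collapse to exactly $p-1$ classes, and in particular that no two of the normalized representatives $G(pqr;a,b^i)$ coincide. The cleanest route is to note that two such semidirect products are isomorphic iff the corresponding subgroups of $\mathrm{Aut}(C_q\times C_r)$ are conjugate \emph{after} allowing an automorphism of the normal subgroup $C_{qr}$ and of the quotient $C_p$; since $\mathrm{Aut}(C_{qr})$ acts on the two factors independently and $\mathrm{Aut}(C_p)$ acts diagonally on the exponents, the invariant that survives is precisely the class of $i$ modulo $(\mathbb{Z}/p)^\times$-scaling applied simultaneously to numerator and denominator—which is just $i$ itself once the first coordinate is pinned to $a$. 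I would present this via an explicit isomorphism on generators rather than invoking general theory, and verify non-isomorphism by comparing the induced $C_p$-action on $C_r$ relative to a fixed action on $C_q$ (an isomorphism must carry the normal $C_q$ to the normal $C_q$ and the normal $C_r$ to the normal $C_r$, since these are the unique subgroups of their orders).
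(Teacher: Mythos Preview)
Your argument has two genuine gaps.

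\textbf{Part (i): the case $n=p^2q$ is missing.} A product of at most three primes with at most two prime divisors can have the shape $p^2q$ with $p\ne q$, and you never treat it: your list is $1,\,p,\,p^2,\,p^3,\,pq$, and the closing sentence ``this also disposes of any `$pqr$' that is secretly $p^2q$'' is simply not justified by what precedes it. The paper handles $p^2q$ by observing (via Sylow) that such a group has a normal Sylow $p$- or $q$-subgroup $N$; since $G/N$ then has order $q^2$ or $p$ it is abelian, so $G'\subseteq N$ has prime-power order. You need an argument of this kind.

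\textbf{Part (ii): the index-$p$ subgroup need not be abelian.} From ``$C_r\trianglelefteq G$ and $C_qC_r\trianglelefteq G$'' you jump to $G=(C_q\times C_r)\rtimes C_p$, but the order-$qr$ normal subgroup $N$ can be the nonabelian group $C_r\rtimes C_q$ whenever $q\mid r-1$ (for instance $C_7\rtimes C_6$ with $|G|=42$ has $N\cong C_7\rtimes C_3$). Your parenthetical checks the wrong direction: you rule out $C_r$ acting on $C_q$ via $r\nmid q-1$, but the issue is $C_q$ acting on $C_r$, which is \emph{not} excluded by $q<r$. The final classification survives because when $N$ is nonabelian every automorphism of $N$ acts trivially on $N/N'\cong C_q$ (a short computation with the defining relation), forcing $G'=C_r$ and hence $G$ not TAV; but you have not argued this. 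The paper sidesteps the whole issue by invoking that groups of square-free order are metacyclic, which immediately gives $G'\cong C_{q'r'}$ cyclic and pins down $p'=p$ from the nontriviality of the two actions.
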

\begin{proof}
(i) Let $G$ be a group of order $n$. If $n$ is a power of a prime $p$, then the commutator subgroup $G'$ of $G$ is also a $p$-group. If $n$ is the product $pq$ of two distinct primes $p$ and $q$, then $G$ is soluble, which means that the order of $G'$ is $1, p$, or $q$. Thus, $G$ is not a TAV group in these cases.

If $n = pq^2$ for distinct primes $p$ and $q$, it is known (e.g., \cite[Theorem 1.31]{Isaacs08-1}) that a group $G$ of order $n$ has a normal Sylow $p$-subgroup or a normal Sylow $q$-subgroup. Let $N$ be a normal Sylow subgroup. Since $|G / N|$ equals $q^2$ or $p$, $G/N$ is abelian and hence $G' \subset N$. Thus, $G'$ has prime-power order and $G$ is not a TAV group.

\noindent (ii) If $q \equiv r \equiv 1 \;(\text{mod $p$})$, there exists a TAV group $G(pqr;a,b)$ of order $pqr$ as seen above. To see the converse, recall that any group with square-free order is metacyclic (e.g., see \cite[Theorem 5.16]{Isaacs08-1}). Therefore, if $G$ is a TAV group of order $pqr$, $G/G' \cong C_{p'}$ and $G' \cong C_{q' r'} \cong C_{q'} \times C_{r'}$, where $\{ p, q, r \} = \{p', q', r' \}$. Since the actions of $C_{p'}$ on $C_{q'}$ and $C_{r'}$ are nontrivial, 
we have $q' \equiv r' \equiv 1 \;(\text{mod $p'$})$; in particular, $p' = p$ and this concludes the converse. Furthermore, this proof also shows that $G \cong G(pqr; a, b)$ for some $a$ and $b$.

Let us assume $q \equiv r \equiv 1 \;(\text{mod $p$})$. For $i = 1, \dots, p-1$ and any integers $a$ and $b$ having order $p$ in $(\mathbb{F}_q)^\times$ and $(\mathbb{F}_r)^\times$, we can define an isomorphism from $G(pqr; a^i, b^i)$ to $G(pqr; a, b)$ by
$$x \mapsto x, \quad y \mapsto y, \quad z \mapsto z^i.$$
Therefore any TAV group of order $pqr$ is isomorphic to one of
$$G(pqr; a, b), G(pqr; a, b^2), \dots, G(pqr; a, b^{p-1})$$
for fixed $a, b$.

Finally, we show that the $p - 1$ TAV groups $G(pqr; a, b^i)$ are not isomorphic. Let $\varphi\colon G(pqr; a, b^i) \to G(pqr; a, b^j)$ be an isomorphism. For any $\alpha \in (\mathbb{F}_q)^\times$ and $\beta \in (\mathbb{F}_r)^\times$,
$$x \mapsto x^\alpha, \quad y \mapsto y^\beta, \quad z \mapsto z$$
define an automorphism on $G(pqr; a, b^j)$. Therefore, by composing such an automorphism with $\varphi$, we may assume $\varphi (x) = x, \varphi(y) = y.$ Taking a conjugation by some element of $(G(pqr; a, b^j))'$, we can further assume that $\varphi(z) = z^k$ for some $k$. Since $\varphi(z) \varphi(x) \varphi(z)^{-1} \varphi(x)^{-a} = x^{a^k - a}$ and $\varphi(z) \varphi(y) \varphi(z)^{-1} \varphi(y)^{-b^i} = y^{b^{jk} - b^i}$, we have $k \equiv 1 \; (\text{mod $p$})$ and $jk \equiv i \;(\text{mod $p$})$, which concludes that $i = j$.
\end{proof}

\begin{theorem}[Theorem \ref{thm:main-6}]\label{thm:pqr}
Let $G$ be a TAV group of order $pqr$, where $p,q,r$ are primes.
Then there exist infinitely many hyperbolic knots that admit $G$ as a smallest TAV group;
in particular, there exist infinitely many hyperbolic knots $K$ with $\ord(K) = pqr$.
\end{theorem}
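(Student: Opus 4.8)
The plan is to combine the satellite construction from Subsection \ref{subsec:3.0} with the hyperbolization argument of Theorem \ref{thm:hyperbolic}. By the preceding Proposition, a TAV group $G$ of order $pqr$ must be isomorphic to some $G(pqr;a,b) = (C_q\times C_r)\rtimes C_p$ with commutator subgroup $C_{qr}$, so $w(G)=1$. Start with any knot $K_0$ admitting an epimorphism $f_0\colon G(K_0)\to G$. Following Subsection \ref{subsec:3.0}, the subgroup $H$ supplied by \cite[Lemma~1]{gru} can be taken to be the cyclic group $G' \cong C_{qr}$ of non-prime-power order (case (i)); choose a loop $\alpha\subset E_{K_0}$ bounding a disc with ${\rm lk}(K_0,\alpha)=0$ such that $f_0(\alpha)$ normally generates $H$, and take $J=T(q,r)$, the $(q,r)$-torus knot, which satisfies $\Delta_{T(q,r)}(e^{2\pi i/qr})=0$. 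By Theorem \ref{const-thm} (via Proposition \ref{const-prop-1}), the satellite knot $K := K_0(\alpha,J)$ has $\Delta_{K}^{\rho\circ f}(t)=0$ where $f = f_0\circ\psi$, so $G$ is a TAV group of $K$.

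Next I would argue that $G$ is in fact a \emph{smallest} TAV group of $K$, i.e. $\ord(K)=pqr$. By Theorem \ref{thm:main-1}(i) we already know $\ord(K)\ge 24$, but more to the point: any TAV group has order at least... here I would instead appeal directly to the structure of groups of order $<pqr$. The key observation is that $f\colon G(K)\to G$ restricted to $G(J)=G(T(q,r))$ has cyclic image (this is how $f$ factors through $\psi$), and any epimorphism $G(K)\to H'$ onto a smaller group either still factors through $\psi$ — in which case it comes from $G(K_0)$ and we can choose $K_0$ to have no small TAV quotients — or its restriction to $G(J)$ is non-cyclic. In the latter case, using Proposition \ref{const-prop-2}'s converse direction one shows the relevant twisted Alexander polynomial does not vanish unless $H'$ is already a TAV group of $K_0$; iterating, the minimal TAV group order is controlled by $K_0$ together with the orders appearing from $T(q,r)$, which are multiples of $pqr$. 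So I would need to choose $K_0$ carefully — e.g. take $K_0$ to be a hyperbolic knot with $\ord(K_0)=pqr$ realized by this same $G$, or more efficiently argue directly that since every TAV group $H'$ satisfies $|H'|\geq pqr$ whenever $|H'|$ is a product of at most three primes (by the Proposition) and $G(K)$ has very restricted finite quotients by the Hurwitz-action bookkeeping, no group of order $<pqr$ is a TAV group of $K$.

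Finally, to upgrade to infinitely many \emph{hyperbolic} knots with $\ord=pqr$, I would run the braid-power construction from the proof of Theorem \ref{thm:hyperbolic} verbatim: present $K$ (or a suitable knot realizing $G$ as smallest TAV group) as the closure of a pseudo-Anosov $n$-braid $b$, set $K_k = \widehat{b^k}$ for $k$ coprime to all naturals $\le n\cdot(pqr)^n$, use \cite[Theorems 1 and 3]{HLN06-1} to see $G$ remains a TAV group of $K_k$, use the Hurwitz-action argument to see every homomorphism from $G(K_k)$ to a group of order $<pqr$ factors through $p_k\colon G(K_k)\to G(K)$, and conclude $\ord(K_k)=pqr$. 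Thurston's hyperbolic Dehn surgery theorem applied to the mapping torus of $b$ with the trivial axis loop $\alpha$, followed by the $k$-fold cyclic branched cover along $\alpha$, makes $K_k$ hyperbolic for large $k$, giving infinitely many such knots. The main obstacle I anticipate is the middle step: pinning down that the satellite knot $K$ (or its braid-power descendants) has \emph{no} TAV group smaller than $G$, which requires combining the satellite-specific non-vanishing criteria (Propositions \ref{const-prop-1}, \ref{const-prop-2}) with the classification of small-order groups, and making sure the finitely many "dangerous" smaller orders — those dividing or near $pqr$ — are all excluded; this is where I would expect to spend most of the effort, possibly by choosing the seed knot $K_0$ to already have $\ord(K_0)=pqr$ via a direct computation or via Theorem \ref{thm:main-1}(iv).
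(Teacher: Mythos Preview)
Your overall architecture---build a satellite with companion $J=T(q,r)$ so that $G$ becomes a TAV group, then feed the result into Theorem~\ref{thm:hyperbolic} to get infinitely many hyperbolic examples---matches the paper. The gap is exactly where you flag it: the minimality step.

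Two concrete problems. First, you start with an \emph{arbitrary} knot $K_0$ admitting a surjection onto $G$. But then Proposition~\ref{const-prop-1} says that for any homomorphism $g$ factoring through $\psi$, $\Delta_{K}^{\rho\circ g}(t)=0$ iff $\Delta_{K_0}^{\rho\circ g_0}(t)=0$ or $\Delta_J(e^{2\pi i/d})=0$. So every TAV group of $K_0$ is inherited by $K$, and you have no control over $\ord(K_0)$; your suggested fix ``choose $K_0$ with $\ord(K_0)=pqr$'' is circular. The paper's move is to take $K_0=T(p,qr)$, a \emph{fibered} torus knot: then $\Delta_{K_0}^{\rho\circ g_0}(t)\neq 0$ for every $g_0$, and the factoring case reduces to checking whether $\Delta_{T(q,r)}(e^{2\pi i/d})=0$, which fails unless $qr\mid |H|$.

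Second, your handling of the non-factoring case is backwards. Proposition~\ref{const-prop-2} says that if $g$ does \emph{not} factor through $\psi$ then the twisted Alexander polynomial \emph{does} vanish---so this case is dangerous, not safe, and there is no ``converse direction'' to invoke. The paper instead proves a purely group-theoretic lemma (Lemma~\ref{lem:qr-grp}): any group generated by an element of order $q$ and one of order $r$ with $|H|\le q^2r/2$ is metacyclic of order $qr$. Applied to the image of $G(T(q,r))$ in any $H$ with $|H|<pqr$, this forces that image to be metabelian, and a short case analysis (splitting on whether $qr\mid |H|$, using also that $G(T(p,qr))$ has generators of orders $p$ and $qr$) shows either $H$ is cyclic or $g$ factors through $\psi$ after all. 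That lemma is the missing ingredient you would need to close the argument.
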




\begin{figure}[t]
\centering
\includegraphics[width=8cm]{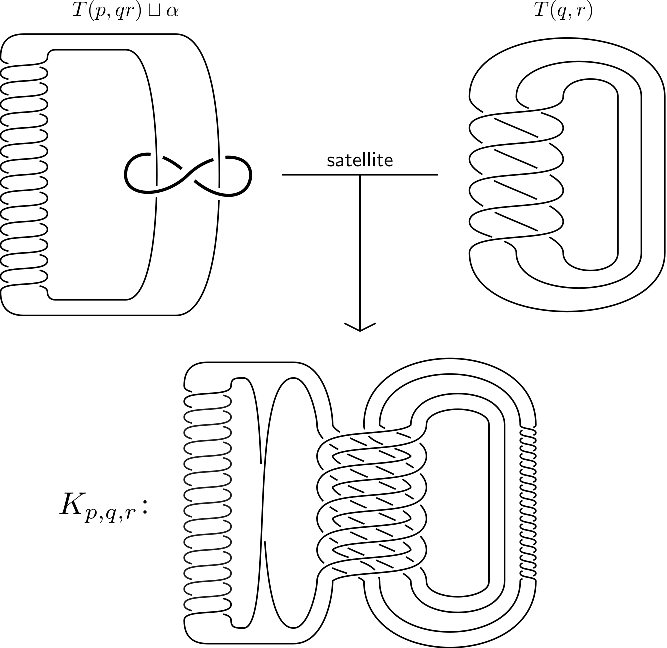}
\caption{Construction of $K_{p,q,r}$; $(p,q,r) = (2,3,5)$.}\label{fig:satellite}
\end{figure}

For a TAV group $G$ of order $pqr$, we construct a satellite knot as in Subsection \ref{subsec:3.0}: Let us take the torus knots $T(p, qr)$ and $T(q, r)$ as $K$ and $J$, respectively. Let $f_0 \colon G(K) \to G$ be a surjective homomorphism. We take an unknotted loop $\alpha$ disjoint from $K$ such that the linking number ${\rm lk}(K, \alpha)$ is zero and $f_0$ sends $\alpha$ to a generator of the commutator subgroup $C_{qr}$ of $G$. We denote the resulting satellite knot $K(\alpha, J)$ by $K_{p,q,r}$; see Figure \ref{fig:satellite} for an example. Let $f \colon G(K_{p,q,r}) \to G$ be the composite $G(K_{p,q,r}) \xrightarrow{\psi} G(K) \xrightarrow{f_0} G$.

\begin{proposition}\label{prop:pqr}
We have $\Delta_{K_{p,q,r}}^{\rho \circ f}(t) = 0;$ in particular, $G$ is a TAV group of $K_{p,q,r}$.
\end{proposition}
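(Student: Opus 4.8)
The plan is to read the conclusion off Proposition \ref{const-prop-1}. By construction we are in the case (i) of Subsection \ref{subsec:3.0} with $H = C_{qr}$ the commutator subgroup of $G$: indeed $\mathrm{lk}(K,\alpha) = 0$, the map $f = f_0\circ\psi$ factors through $\psi$, and $f_0$ sends $\alpha$ to a generator of $C_{qr}$. So the first step is to identify the integer $d$ of Proposition \ref{const-prop-1}. The restriction $f_J$ of $f$ to $G(J) = G(T(q,r))$ equals $f_0\circ\alpha_*\circ\phi_J$, whose image is the cyclic group $\langle f_0(\alpha)\rangle = C_{qr}$; hence $d = qr$. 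Proposition \ref{const-prop-1} then gives that $\Delta_{K_{p,q,r}}^{\rho\circ f}(t) = 0$ if and only if $\Delta_{T(p,qr)}^{\rho\circ f_0}(t) = 0$ or $\Delta_{T(q,r)}\big(e^{2\pi i/(qr)}\big) = 0$.

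The second step is to check that the latter alternative always holds, which finishes the proof irrespective of whether $\Delta_{T(p,qr)}^{\rho\circ f_0}(t)$ vanishes. Write $\Delta_{T(q,r)}(t) = \frac{(t^{qr}-1)(t-1)}{(t^q-1)(t^r-1)}$ and put $\zeta = e^{2\pi i/(qr)}$. Then $\zeta^{qr} = 1$, so the numerator vanishes at $\zeta$, whereas $\zeta^q = e^{2\pi i/r} \neq 1$ and $\zeta^r = e^{2\pi i/q} \neq 1$ (since $q,r \geq 2$), so the denominator does not; evaluating the polynomial identity $\Delta_{T(q,r)}(t)\,(t^q-1)(t^r-1) = (t^{qr}-1)(t-1)$ at $\zeta$ yields $\Delta_{T(q,r)}(\zeta) = 0$.

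For the ``in particular'' assertion I would note that $f = f_0\circ\psi$ is onto $G$: $f_0$ is surjective by hypothesis, and $\psi$ is surjective because it restricts to the inclusion-induced epimorphism $i_*\colon G(K\cup\alpha)\to G(K)$ (the solid torus reglued to reconstruct $E_K$ has core isotopic to $\alpha$, hence contributes nothing new to $\pi_1$). Thus $\Delta_{K_{p,q,r}}^{\rho\circ f}(t) = 0$ exhibits $G$ as a TAV group of $K_{p,q,r}$.

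I do not expect a genuine obstacle here: the statement is a direct instance of Proposition \ref{const-prop-1} combined with the classical torus-knot Alexander polynomial. The only point requiring care is the bookkeeping that pins down $d = qr$, i.e.\ tracking that $f$ sends the meridian of $J$ to the chosen generator of $C_{qr}$ and the longitude of $J$ to the identity, so that each component of the relevant covering of $E_J$ is the $(qr)$-fold cyclic cover.
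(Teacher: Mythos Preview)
Your proof is correct and follows essentially the same route as the paper: apply Proposition~\ref{const-prop-1} and check that $\Delta_{T(q,r)}\big(e^{2\pi i/(qr)}\big)=0$. The paper phrases this last point by noting that $\Delta_{T(q,r)}$ is the $qr$-th cyclotomic polynomial, whereas you compute directly from the rational-function formula; your added bookkeeping that $d=qr$ and that $f$ is surjective is accurate and simply makes explicit what the paper leaves implicit.
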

\begin{proof}
Since $G'$ is a cyclic group of order $qr$, we can apply Proposition \ref{const-prop-1} to $\Delta_{K_{p,q,r}}^{\rho \circ f}(t)$: Recalling that the Alexander polynomial $\Delta_J(t)$ of $J = T(q,r)$ is the $qr$-th cyclotomic polynomial, we have $\Delta_J(e^{2 \pi i/qr}) = 0$ and hence $\Delta_{K_{p,q,r}}^{\rho \circ f}(t) = 0.$
\end{proof}

In order to prove Theorem \ref{thm:pqr}, we prepare a purely algebraic lemma:

\begin{lemma}\label{lem:qr-grp}
Let $q, r$ be distinct primes with $q < r$ and let $H$ be a finite group that is generated by $x \in H$ with order $q$ and $y \in H$ with order $r$. If $|H| \leq q^2r/2$, $H$ is a metacyclic group of order $qr$.
\end{lemma}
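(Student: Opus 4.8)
The plan is to prove that $|H| = qr$; granted that, metacyclicity is immediate, since a group of order $qr$ with $q < r$ has $n_r \mid q$ and $n_r \equiv 1 \pmod r$, hence $n_r = 1$, so its Sylow $r$-subgroup is normal and $H \cong C_r \rtimes C_q$. To reach $|H| = qr$ I would first record the numerical constraints: since $H$ contains elements of orders $q$ and $r$, $qr \mid |H|$; if $r^2 \mid |H|$ then $qr^2 \mid |H|$, so $|H| \ge qr^2 > q^2r/2$, contrary to hypothesis, and likewise $q^2 \nmid |H|$. Thus $|H| = qrm$ with $\gcd(m,qr) = 1$, and $qrm \le q^2r/2$ forces $m \le q/2$, so every prime divisor of $m$ is smaller than $q$. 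Assume, towards a contradiction, that $m \ge 2$ (hence $q \ge 5$ and $r \ge 7$), and argue by induction on $|H|$.

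The decisive observation is that $H$ is then perfect. Set $N = \langle x^H \rangle$, a normal subgroup of $H$; since $H/N$ is generated by the image of $y$ it is cyclic of order dividing $r$, so $[H:N] \in \{1, r\}$. If $[H:N] = r$, then $|N| = qm$; the number of Sylow $q$-subgroups of $N$ divides $m < q$ and is $\equiv 1 \pmod q$, hence equals $1$, and this unique — so characteristic — Sylow $q$-subgroup of $N$ contains every $H$-conjugate of $x$, forcing it to equal $N$ and $m = 1$, a contradiction. Hence $N = H$, so $H/H'$, generated by the image of $x$, has order dividing $q$; the symmetric argument with $\langle y^H \rangle$ (using $m < r$) shows $|H/H'|$ also divides $r$. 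Therefore $H = H'$.

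Next I would invoke the structure of the perfect group $H$. A nontrivial perfect group is insoluble, so by Burnside's $p^aq^b$-theorem $|H| = qrm$ has at least three prime divisors, so $m$ has a prime divisor $s \le m \le q/2$. I would then reduce to the case of trivial soluble radical: a minimal normal subgroup of $H$ inside it is an elementary abelian $\ell$-group $V$ with $\ell \mid m$; because $q$ and $r$ exceed $\ell^i - 1$ for all $\ell^i \le |V|$, neither divides $|\mathrm{GL}(V)|$, so $x$ and $y$ centralise $V$, i.e. $V \le Z(H)$; then $H/V$ is perfect, generated by elements of orders $q$ and $r$, and of order at most $q^2r/2$, so by the inductive hypothesis it is metacyclic of order $qr$ — impossible for a nontrivial perfect group. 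With the soluble radical trivial, $H$ embeds in $\mathrm{Aut}(\mathrm{Soc}(H))$ and $\mathrm{Soc}(H)$ is a direct product of nonabelian simple groups; each factor has order $\ge 60$, a factor divisible by $q$ (resp.\ $r$) has order $\ge q^2$ (resp.\ $\ge r^2$) since its Sylow subgroup at that prime is a non-normal group of prime order, and a factor coprime to $qr$ has order dividing $m \le q/2$, so $|\mathrm{Soc}(H)| \le |H| \le q^2r/2$ leaves exactly one socle factor $T$; finally $H/T \hookrightarrow \mathrm{Out}(T)$ is perfect and soluble (the Schreier conjecture), hence trivial, so $H = T$ is simple.

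It remains to exclude that $H$ is a nonabelian simple group of order $qrm$ with $2 \le m \le q/2$ and $q < r$, whose Sylow $q$- and $r$-subgroups are cyclic of prime order. If $m$ is squarefree, then so is $|H|$, whence $H$ is metacyclic and soluble — contradiction. Otherwise I would combine the bounds $|H| \ge q^2$, $|H| \ge r^2$ (and the analogous bound at the repeated prime of $m$) and the congruences $n_q \equiv 1 \pmod q$, $n_r \equiv 1 \pmod r$ with $|H| = qrm \le q^2r/2$; most cleanly one appeals to the classification of finite simple groups, since the smallest nonabelian simple group in which a prime $p$ exactly divides the order already exceeds what $q^2r/2$ allows once both $q$ and $r$ divide $|H|$. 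No such $H$ exists, so $m = 1$ and the lemma follows. The crux is precisely this last elimination of small simple groups: the uniform element count $|H| \ge q^2 + r^2 + s^2 - 2$ is not quite decisive across the whole range, so one must descend to a bounded case analysis keyed to the divisors of $m$ or invoke the classification.
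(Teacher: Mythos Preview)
Your opening reductions are fine: writing $|H|=qrm$ with $\gcd(m,qr)=1$ and $m\le q/2$ is exactly right, and your proof that $H$ is perfect when $m\ge 2$ (via $\langle x^H\rangle$ and $\langle y^H\rangle$) is clean and correct. The reduction of the soluble radical to the center using $|{\rm GL}(V)|$ is also sound.

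However, the argument as written is not a proof. Two points:

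\begin{enumerate}
\item Your passage ``$|\mathrm{Soc}(H)|\le|H|\le q^2r/2$ leaves exactly one socle factor'' does not follow from the stated bounds. If a single factor $T_1$ carries both $q$ and $r$ (so $|T_1|\ge r^2$) and a second factor $T_2$ is coprime to $qr$ (so $|T_2|\mid m$, $|T_2|\ge 60$), the product bound only gives $60r^2\le q^2r/2$, i.e.\ $q>120$, which is not a contradiction. This is easily repaired: since $H$ is perfect and $H/\mathrm{Soc}(H)$ embeds in the soluble group $\mathrm{Out}(\mathrm{Soc}(H))$, in fact $H=\mathrm{Soc}(H)=T_1\times\cdots\times T_k$; then each $T_i$ is generated by the $i$-th coordinates of $x$ and $y$, so $q\mid|T_i|$ for every $i$, and $q\,\|\,|H|$ forces $k=1$.
\item More seriously, the final elimination of a nonabelian simple group of order $qrm$ with $m\le q/2$ is, as you concede, not carried out. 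Invoking the classification of finite simple groups here is drastically disproportionate to the statement, and ``a bounded case analysis keyed to the divisors of $m$'' is not bounded at all: $m$ ranges over all integers up to $q/2$ as $q$ varies. This is the heart of the matter, and without it the argument is incomplete.
\end{enumerate}

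The paper's proof avoids all of this and is entirely elementary. One never needs perfectness, Burnside, Schreier, or CFSG; one simply shows that either $Q=\langle x\rangle$ or $R=\langle y\rangle$ is already normal in $H$, whence $H=QR$ has order $qr$. Since $m<q<r$, both are Sylow subgroups. Suppose $n_q,n_r>1$. From $n_q\equiv 1\pmod q$ and $n_q\mid mr$ with $n_q>m$ one gets $r\mid n_q$, so $n_q=ar$ with $0<a\le m<q$ and $ar\equiv 1\pmod q$; symmetrically $n_r=bq$ with $0<b<r$ and $bq\equiv 1\pmod r$. By the Chinese remainder theorem $ar+bq\equiv 1\pmod{qr}$, and the range forces $n_q+n_r=ar+bq=qr+1$. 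Hence one of $n_q,n_r$ exceeds $qr/2$, giving $|H|\ge q\cdot n_q>q^2r/2$ (or the analogous bound with $r$), contrary to hypothesis. Thus one Sylow subgroup is normal and $|H|=qr$.

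So: your detour through the structure theory of perfect groups is interesting but both over-powered and, at the decisive step, incomplete; the intended argument is a two-line Sylow count.
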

\begin{proof}
Because $H$ has elements with orders $q$ and $r$, the order of $H$ is a multiple of $qr$; define $n = |H|/qr$. Let $Q, R$ be the subgroup generated by $x, y$, respectively. Since $n < q < r$, $Q$ and $R$ are Sylow subgroups.

We claim that at least one of $Q$ or $R$ is normal in $H$. Let $n_q, n_r$ be the numbers of the Sylow $q$-, $r$-subgroups of $H$, respectively, and suppose that $n_q, n_r >1$. By the Sylow theorem, $n_q \in q\mathbb{Z} + 1$ and especially $n_q > q$. Considering the transitive action of $H$ on the set of the Sylow $q$-subgroups, we find that $n_q$ divides $nr$; this implies that $r$ divides $n_q$ since $n_q > q > n$. Then, $n'_q := n_q/r$ is a positive integer less than or equal to $n$. In the same way, $n'_r := n_r/q$ is an integer with $1 \leq n'_r \leq n$.

Because $q$ and $r$ are coprime, we can take integers $a, b$ such that $ar + bq = qr + 1$, and we may further assume that $0 < a < q$ and hence $0 < b < r$. Here, the integer $a$ can be characterized as the unique integer such that $ar \in q \mathbb{Z} + 1$ and $0 < a < q$. Thus, we find $a = n'_q$ and similarly $b = n'_r$. We have $n_q + n_r = qr + 1$ and hence one of $n_q$ and $n_r$ is greater than $qr/2$. In either case, this means $|H| > qr/2 \cdot q = q^2r/2$, but this is a contradiction. Therefore $n_q = 1$ or $n_r = 1$.

Since $Q$ or $R$ is normal in $H$, we find $H = QR$. Recalling that $Q$ and $R$ are cyclic, we conclude that $H$ is a metacyclic group of order $qr$.
\end{proof}

\begin{proof}[Proof of Theorem \ref{thm:pqr}]
The existence of infinitely many hyperbolic knots admitting $G$ as a TAV group is a direct consequence of Proposition \ref{prop:pqr} and Theorem \ref{thm:hyperbolic}. After verifying $\ord(K_{p,q,r}) = pqr$, Theorem \ref{thm:hyperbolic} also shows that the family of hyperbolic knots contains infinitely many knots $K'$ with $\ord(K') = pqr$.

We first show that the image $H_J$ of any homomorphism $g_J \colon G(J) \to H$ from $G(J)$ to any group $H$ with $|H| < pqr$ is metabelian. Let us recall that $G(J) = G(T(q, r))$ has a well-known presentation $\langle x, y \mid x^q = y^r \rangle$ and that its center is the infinite cyclic group generated by $x^q = y^r$. Thus, $H_J/Z(H_J)$, where $Z(H_J)$ is the center of $H_J$, is generated by two elements $\overline{g_J(x)}$ and $\overline{g_J(y)}$, where an over-line expresses the image in the quotient group $H_J/Z(H_J)$, and we find $\overline{g_J(x)}^q = \overline{g_J(y)}^r = \overline{e}$. If $\overline{g_J(x)}$ or $\overline{g_J(y)}$ is the identity, $H_J$ is generated by a single element and hence cyclic. Otherwise, we can apply Lemma \ref{lem:qr-grp} (we should recall that $p \leq (q-1)/2$) to $H_J/Z(H_J)$ to find that $H_J/Z(H_J)$ is a metacyclic group of order $qr$. Because the second homology group of a metacyclic group of order $qr$ is trivial, any central extension preserves the commutator subgroup and only extends the abelianization. Thus, $H_J$ is metabelian. Furthermore, this proof shows that if $H_J$ is not cyclic, then $|H|$ is a multiple of $qr$.

Let $g\colon G(K_{p, q, r}) \to H$ be a surjective homomorphism onto a group $H$ with $|H| < pqr$. Since $E_{K_{p,q,r}} = E_{K \cup \alpha} \cup E_J$, we have the restriction homomorphism $g_J \colon G(J) \to H$ and its image $H_J$ is metabelian as seen above; in particular, the second derived group $(H_J)''$ is trivial. Recalling that a preferred longitude of $J$ and a meridian of $\alpha$ are glued together in $E_{K_{p,q,r}}$, we find that $g$ induces a group homomorphism $g_K \colon G(K) \to H$.

If $|H| \in qr \mathbb{Z}$, $p$ does not divide $|H|$ and hence the image of $g_K\colon G(K) = G(T(p, qr)) \to H$ is cyclic. In this case, (a preferred longitude of) $\alpha$ is mapped to $e \in H$. Since a preferred longitude of $\alpha$ is identified with a meridian of $J$, $g_J$ is trivial and it follows that $H$ is cyclic; especially, $H$ is not a TAV group.

If $|H| \not\in qr \mathbb{Z}$, $H_J$ is cyclic as seen above, which also implies that $g \colon G(K_{p,q,r}) \to H$ factors through $\psi \colon G(K_{p,q,r}) \to G(K)$, i.e., there exists a homomorphism $g_0\colon G(K) \to H$ such that $g = g_0 \circ \psi$. Since $K = T(p,qr)$ is fibered, $\Delta_K^{\rho \circ g_0}(t) \neq 0$. Also, because the roots of the Alexander polynomial $\Delta_J(t)$ of $J = T(q,r)$ are the primitive $qr$-th roots of unity, the assumption $|H| \not\in qr\mathbb{Z}$ implies that $\Delta_J(e^{2 \pi i/|H_J|}) \neq 0$. Thus, Proposition \ref{const-prop-1} concludes that $\Delta_{K_{p,q,r}}^{\rho \circ g}(t) \neq 0$: $H$ is not a TAV group of $K_{p,q,r}$.
\end{proof}

\begin{corollary}\label{cor:273}
There exist infinitely many hyperbolic knots that admit distinct smallest TAV groups. Moreover, there exist infinitely many TAV orders $\ord(K)$ which correspond to such smallest TAV groups. 
\end{corollary}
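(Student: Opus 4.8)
The plan is to produce, for infinitely many integers $n$, a hyperbolic knot $K$ with $\ord(K)=n$ that admits two non-isomorphic TAV groups of order $n$; such a $K$ has a non-unique smallest TAV group, and the resulting values of $n$ furnish the infinitely many TAV orders claimed. Fix $p=3$ and set $n=3qr$ for primes $q<r$ with $3<q$ and $q\equiv r\equiv1\pmod3$; Dirichlet's theorem gives infinitely many such pairs, and the products $3qr$ are pairwise distinct by unique factorization. By the classification of TAV groups of order $pqr$ obtained above, applied with $p=3$, there are exactly two such groups, $G_1=G(3qr;a,b)$ and $G_2=G(3qr;a,b^2)$, which are non-isomorphic and each have commutator subgroup isomorphic to $C_{qr}$. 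It remains to build one hyperbolic knot admitting both.

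To force $G_1$ and $G_2$ onto a single knot I would route the two epimorphisms through a common group. Let $\widehat G=(C_q\times C_r\times C_r)\rtimes C_3$, where a generator $z$ of $C_3$ acts by $x\mapsto x^a$ on $C_q=\langle x\rangle$ and by $y_i\mapsto y_i^{b^i}$ on the $i$-th factor $C_r=\langle y_i\rangle$. Since $a$ and $b^i$ are nontrivial in the relevant multiplicative groups, $\widehat G'=C_q\times C_r\times C_r$ has order $qr^2$ and is not a prime power, while $\widehat G/\widehat G'\cong C_3$; hence $w(\widehat G)=1$ and $\widehat G$ is a TAV group by Theorem~\ref{thm:main-4}. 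Killing $y_2$ (resp.\ $y_1$) defines a quotient map $\pi_1\colon\widehat G\to G_1$ (resp.\ $\pi_2\colon\widehat G\to G_2$). A short Vandermonde-type computation shows that $\widehat G$ is generated by $z$ together with the single diagonal element $w=(x,y_1,y_2)$ of order $qr$; so, factoring through $G(T(3,qr))$ modulo its center (which is $C_3*C_{qr}$), there is an epimorphism $f_0\colon G(T(3,qr))\to\widehat G$. Now pick an unknotted loop $\alpha\subset E_{T(3,qr)}$ with ${\rm lk}(T(3,qr),\alpha)=0$ and $f_0(\alpha)=w$ — possible because $w\in\widehat G'=f_0\bigl(G(T(3,qr))'\bigr)$ and any element of a knot group is carried by such a loop — and form the satellite knot $K=T(3,qr)(\alpha,T(q,r))$ as in Subsection~\ref{subsec:3.0}.

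For each $i$, the composite $\pi_i\circ f_0\colon G(T(3,qr))\to G_i$ is surjective and sends $\alpha$ to $\pi_i(w)$, a generator of $(G_i)'\cong C_{qr}$. Since the Alexander polynomial of $T(q,r)$ is the $qr$-th cyclotomic polynomial and hence vanishes at $e^{2\pi i/qr}$, Proposition~\ref{const-prop-1}, applied exactly as in Proposition~\ref{prop:pqr}, gives $\Delta_K^{\rho\circ(\pi_i\circ f_0\circ\psi)}(t)=0$; thus $G_1$ and $G_2$ are both TAV groups of $K$. The lower bound $\ord(K)\ge3qr$ is exactly what the proof of Theorem~\ref{thm:pqr} establishes, since that argument uses only the satellite data $\bigl(T(3,qr),\alpha,T(q,r)\bigr)$ with vanishing linking number and the structure of $G(T(3,qr))$ and $G(T(q,r))$, not the particular group of order $3qr$; it therefore applies verbatim to $K$. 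As $G_1$ is a TAV group of $K$ with $|G_1|=3qr$, we get $\ord(K)=3qr$, so $G_1$ and $G_2$ are two distinct smallest TAV groups of $K$.

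To obtain hyperbolic examples I would re-run the braiding construction from the proof of Theorem~\ref{thm:hyperbolic} on this $K$: it is a genuine satellite — the companion $T(q,r)$ is nontrivial, and the pattern is essential since $f_0(\alpha)=w\neq e$ makes $\alpha$ non-nullhomotopic in $E_{T(3,qr)}$ while ${\rm lk}(T(3,qr),\alpha)=0$ distinguishes it from the core — hence not a torus link and the closure of a pseudo-Anosov braid $b$ on a prime number of strands. Then for all sufficiently large $k$ avoiding a fixed finite set of integers, the closure $K_k$ of $b^k$ is hyperbolic, each $G_i$ remains a TAV group of $K_k$ by the cabling factorization of twisted Alexander polynomials \cite{HLN06-1}, and $\ord(K_k)=3qr$ (the Hurwitz-action lower bound is group-independent, and $G_1$ being a TAV group of $K_k$ gives $\ord(K_k)\le3qr$). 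Hence each admissible pair $(q,r)$ yields infinitely many hyperbolic knots with two distinct smallest TAV groups and $\ord$ equal to $3qr$, and letting $(q,r)$ vary gives infinitely many such knots and infinitely many such TAV orders. The main obstacle is the simultaneous realization in the second step — one loop $\alpha$ must witness $G_1$ and $G_2$ at once — which is why one passes to $\widehat G$; the substantive checks there are that $\widehat G$ is still a TAV group, is still a quotient target of the torus-knot group, and is generated by the single diagonal element whose images generate $(G_1)'$ and $(G_2)'$, the last being the Vandermonde computation.
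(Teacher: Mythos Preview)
Your argument is correct and reaches the same conclusion as the paper, but by a somewhat different route.  The paper's proof is extremely brief: it fixes $p=3$, $q=7$, $r=13$, notes that there are two non-isomorphic TAV groups $G_1,G_2$ of order $273$, and then simply observes that ``the proof of Theorem~\ref{thm:pqr} depends only on the order of these TAV groups.''  In other words, the paper uses the single satellite knot $K_{3,7,13}$ (built once, from $T(3,91)$, a suitable $\alpha$, and $T(7,13)$) and claims that \emph{both} $G_1$ and $G_2$ are smallest TAV groups of it, because neither the verification in Proposition~\ref{prop:pqr} nor the lower-bound argument in the proof of Theorem~\ref{thm:pqr} uses anything about $G$ beyond $|G|=pqr$.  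Infinitely many orders then come from Dirichlet, exactly as you do.

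The point at which your approach genuinely diverges is the mechanism for getting both $G_1$ and $G_2$ onto the \emph{same} knot.  The paper leaves implicit why the $\alpha$ chosen for one $G_i$ also serves for the other (for instance, a commutator such as $[u,v]$ maps to an order-$qr$ element under the standard epimorphism to \emph{every} $G(pqr;a,b^i)$).  You instead build the overgroup $\widehat G=(C_q\times C_r\times C_r)\rtimes C_3$, surject $G(T(3,qr))$ onto it, and pick $\alpha$ with $f_0(\alpha)$ the diagonal element $w$; your Vandermonde check that $z$ and $w$ generate $\widehat G$ is the substantive extra work.  This buys you a completely transparent reason why one $\alpha$ witnesses both $G_1$ and $G_2$, at the cost of a longer construction.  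Your reuse of the lower-bound argument from Theorem~\ref{thm:pqr} and of the braiding construction from Theorem~\ref{thm:hyperbolic} (checking that both $G_i$ survive to $K_k$ and that $\ord(K_k)=3qr$) is sound: those arguments indeed use only the satellite data and $\ord(K)$, not the particular group realizing the minimum.
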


\begin{proof}
Let $G_i~(i=1,2)$ be the finite groups defined by
\begin{align*}
G_1&=C_{91}\rtimes C_3=\la x,y\,|\,x^{91}, y^3,yxy^{-1}x^{-74}\ra, \\
G_2&=C_{91}\rtimes_4 C_3=\la x,y\,|\, x^{91},y^3,yxy^{-1}x^{-9}\ra.
\end{align*} 
They appear in GroupNames \cite{GN} as $(273,3)$ and $(273,4)$
respectively, and share the order $273=3\cdot7\cdot13$. 

If we apply Theorem \ref{thm:pqr} to these TAV groups $G_1$ and $G_2$, we can obtain infinitely many hyperbolic knots as desired. Because the proof of Theorem \ref{thm:pqr} depends only on the order of these TAV groups. 
Since it is well known that there exist infinitely many prime numbers $q$ satisfying $q\equiv1~(\mathrm{mod} ~3)$, the latter assertion follows immediately.
\end{proof}

In our previous paper \cite{IMS23-1}, we proposed the following problem:

\begin{problem}[{\cite[Problem~5.3]{IMS23-1}}]\label{que:5.3}
Find a non-fibered knot $K$ and an epimorphism $f \colon G(K) \to D_{15}$ such that $\D_K^{\rho\circ f}(t)=0$. 
\end{problem}

As a corollary of Theorem \ref{thm:pqr}, we can give an answer to Problem \ref{que:5.3}. 

\begin{corollary}\label{cor:D_15}
Let $K=K_{2,3,5}$ be the knot introduced after Theorem \ref{thm:pqr}. 
Then, the following hold:
\begin{itemize}
\item[(i)]
There is an epimorphism $f\colon G(K)\to D_{15}$ such that 
$\D_K^{\rho\circ f}(t)=0$, 
\item[(ii)]
For any epimorphism $f\colon G(K) \to G$ with $|G|<30$, 
$\D_K^{\rho\circ f}(t)\not=0$. 
\end{itemize}
In particular, it holds that $\ord(K)=30$. 
\end{corollary}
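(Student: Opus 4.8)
The plan is to derive both statements directly from Theorem \ref{thm:pqr} and its proof applied to the specific TAV group $G = D_{15}$, which has order $2 \cdot 3 \cdot 5 = pqr$ with $(p,q,r) = (2,3,5)$. First, I would observe that $D_{15} = C_{15} \rtimes C_2$ has commutator subgroup $C_{15} \cong C_3 \times C_5$, which is not a $p$-group, and clearly $w(D_{15}) = 1$; thus $D_{15}$ is a TAV group and the hypotheses of Theorem \ref{thm:pqr} are met. The knot $K = K_{2,3,5}$ is exactly the satellite knot $K(\alpha, J)$ with $K = T(2,15)$ and $J = T(3,5)$ constructed after Theorem \ref{thm:pqr}, together with an epimorphism $f_0 \colon G(T(2,15)) \to D_{15}$ sending $\alpha$ to a generator of the commutator subgroup $C_{15}$, and $f = f_0 \circ \psi$.

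For part (i), I would invoke Proposition \ref{prop:pqr} verbatim: since $(D_{15})' = C_{15}$ is cyclic of order $qr = 15$ and $\Delta_J(t) = \Delta_{T(3,5)}(t)$ is the $15$-th cyclotomic polynomial, we have $\Delta_J(e^{2\pi i/15}) = 0$, so Proposition \ref{const-prop-1} gives $\Delta_K^{\rho \circ f}(t) = 0$; this furnishes the required epimorphism $f \colon G(K) \to D_{15}$. For part (ii), I would re-run the argument in the proof of Theorem \ref{thm:pqr} establishing $\ord(K_{p,q,r}) \geq pqr$, but now noting that the bound used there depends only on $|H| < pqr = 30$: for any surjection $g \colon G(K) \to H$ with $|H| < 30$, the restriction $g_J \colon G(J) \to H$ has image $H_J$ generated by elements of orders $3$ and $5$, so by Lemma \ref{lem:qr-grp} (applicable since $p = 2 \leq (q-1)/2 = 1$—wait, here $q = 3$ gives $(q-1)/2 = 1$ and $p = 2 > 1$, so I must instead argue directly that $|H_J| \leq |H| < 30 < 3^2 \cdot 5/2 = 22.5$ fails; more carefully, $|H_J| < 30$ forces $|H_J| \in \{15\}$ if non-cyclic among multiples of $15$, so $H_J$ is metacyclic of order $15$, hence metabelian) $H_J$ is cyclic or metacyclic of order $15$; in either case it is metabelian, and the case analysis $|H| \in 15\mathbb{Z}$ versus $|H| \notin 15\mathbb{Z}$ proceeds exactly as in the proof of Theorem \ref{thm:pqr}, showing $\Delta_K^{\rho \circ g}(t) \neq 0$.

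Combining (i) and (ii) yields $\ord(K) = 30$ immediately, since (i) shows $D_{15}$ is a TAV group of $K$ so $\ord(K) \leq 30$, and (ii) shows no group of smaller order is a TAV group of $K$ so $\ord(K) \geq 30$. The one point requiring care — and the main obstacle — is verifying that Lemma \ref{lem:qr-grp} or a substitute genuinely applies when $p = 2$, $q = 3$: the inequality $p \leq (q-1)/2$ used in the proof of Theorem \ref{thm:pqr} fails, so I would instead argue by hand that a group generated by an element of order $3$ and an element of order $5$ and having order strictly less than $30$ must have order exactly $15$ (the only multiple of $15$ below $30$) and is therefore metacyclic, hence metabelian; this is a finite, elementary check using the Sylow theorems (the Sylow $5$-subgroup is normal since $n_5 \equiv 1 \pmod 5$ and $n_5 \mid 3$ force $n_5 = 1$). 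With this adjustment the rest of the argument in the proof of Theorem \ref{thm:pqr} transfers without change.
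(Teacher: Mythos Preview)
Your approach is essentially the paper's own: Corollary~\ref{cor:D_15} is stated there without a separate proof, as an immediate consequence of Theorem~\ref{thm:pqr} (and the verification $\ord(K_{p,q,r})=pqr$ carried out in its proof) specialized to $(p,q,r)=(2,3,5)$ and $G=D_{15}$.

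You have, in addition, caught a genuine slip in the paper's proof of Theorem~\ref{thm:pqr}: the parenthetical ``we should recall that $p \leq (q-1)/2$'' used to justify the hypothesis of Lemma~\ref{lem:qr-grp} fails precisely when $(p,q)=(2,3)$, since then $(q-1)/2=1<2=p$, and this is the only pair of primes $p<q$ with $q\equiv 1\pmod p$ for which it fails. Your repair is correct and is the natural one: in the non-cyclic branch, $H_J/Z(H_J)$ is generated by elements of orders $3$ and $5$, so $15$ divides its order; since $|H_J/Z(H_J)| \leq |H| < 30$, the order is exactly $15$, and every group of order $15$ is cyclic (hence metacyclic). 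At this point either Lemma~\ref{lem:qr-grp} applies after all (as $15 \leq q^2 r/2 = 22.5$), or one simply bypasses it. The remainder of the case analysis ($|H|\in 15\Z$ versus $|H|\notin 15\Z$) then carries over verbatim. One small wording issue: in your middle paragraph you write ``$|H_J|\in\{15\}$'' where you mean $|H_J/Z(H_J)|$; the final paragraph states the argument cleanly.
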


\section{Twisted Alexander polynomials for a central extension}\label{sec:extension}

In this section, we provide a formula for the twisted Alexander polynomial of a central extension of a finite group of weight one by a cyclic group. Using the formula, we can find an important fact about the image of the TAV order function $\ord|_{\mathcal{N}}\colon\mathcal{N}\to\N$ (see Example \ref{ex:dic33} for details).  

Let $G_{k,1}$ be a finite group with the abelianization $C_k=\la \bar{x}\,|\,\bar{x}^k\ra$, namely, it is of weight one and fits the following short exact sequence: 
$$
1\longrightarrow H \longrightarrow G_{k,1} \overset{\pi}{\longrightarrow} C_{k} \longrightarrow 1,
$$
where $H$ is the commutator subgroup of $G_{k,1}$. We then define a group $G_{k,n}$ as the pull-back of two epimorphisms $\pi\colon G_{k,1}\to C_k$ and $\pi_k\colon C_{kn}=\la x\,|\,x^{kn}\ra\to C_k$, that is, $G_{k,n}=\{(z,x^j)\in G_{k,1}\times C_{kn}\,|\,\pi(z)=\pi_k(x^j)\}$:
\begin{equation*}
\begin{CD}
1@>>> H @>>> G_{k,n} @>{\pr_2}>> C_{kn} @>>> 1\\
@. @VV{\mathrm{id}_H}V @VV{\pr_1=\,\pr}V @VV{\pi_k}V \\
1@>>> H @>>> G_{k,1} @>{\pi}>> C_{k} @>>> 1
\end{CD}
\end{equation*}
where $\pr_\mu$ is the projection onto the $\mu$-th  component. We note that $\pr_2\colon G_{k,n}\to C_{kn}$ is the abelianization, and the inclusion $\iota\colon C_n=\la y\,|\,y^n\ra\to G_{k,n},\,y\mapsto (e,x^k)$ is a homomorphism such that 
$$
1\longrightarrow C_n \overset{\iota}{\longrightarrow} G_{k,n}\overset{\pr}{\longrightarrow} G_{k,1}\longrightarrow 1
$$ 
is a central extension. We call the group $G_{k,n}$ the \textit{$n$-th central extension} of $G_{k,1}$. If the group $G_{k,1}$ does not appear as the $n$-th central extension $(n\geq2)$ of any other group, then we call it a \textit{seed} of any series of central extensions. At the end of this section, we discuss a characterization of seeds. 

Now, we prepare a lemma. Let $K$ be a knot in $S^3$ (not necessarily non-fibered). 

\begin{lemma}\label{lem:lifting}
For a given epimorphism $f_1\colon G(K) \to G_{k,1}$, there exists an epimorphism $f_n\colon G(K) \to G_{k,n}$ such that $\pr\circ f_n=f_1$. 
\end{lemma}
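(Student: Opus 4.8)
The plan is to exhibit $f_n$ directly as a pair. Recall $G_{k,n}$ is the pull-back $\{(z,x^j)\in G_{k,1}\times C_{kn}\mid \pi(z)=\pi_k(x^j)\}$, so a homomorphism $f_n\colon G(K)\to G_{k,n}$ with $\pr\circ f_n=f_1$ is the same thing as a homomorphism $g\colon G(K)\to C_{kn}$ satisfying $\pi_k\circ g=\pi\circ f_1$, via $f_n(\gamma)=(f_1(\gamma),g(\gamma))$. Now $\pi\circ f_1\colon G(K)\to C_k$ is precisely the composition of the abelianization $\phi\colon G(K)\to\Z$ with reduction $\Z\to C_k$ — indeed $\pi$ is the abelianization of $G_{k,1}$, so $\pi\circ f_1$ factors through $H_1(E_K;\Z)\cong\Z$, and since $f_1$ is onto it induces a surjection $\Z\to C_k$, i.e.\ the standard reduction map (after identifying the generator with a meridian). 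Therefore I take $g$ to be the composition $G(K)\xrightarrow{\phi}\Z\to C_{kn}$, the reduction mod $kn$; then $\pi_k\circ g$ is reduction mod $k$, which agrees with $\pi\circ f_1$. This defines the homomorphism $f_n$, and $\pr\circ f_n=\pr_1\circ f_n=f_1$ by construction.

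It remains to check surjectivity of $f_n$. The image contains $f_1(\gamma)$ in the first coordinate for every $\gamma$ (since $\pr\circ f_n=f_1$ is onto), so it surjects onto $G_{k,1}$; in particular it contains all of $H\times\{0\}$ (choosing $\gamma$ with $f_1(\gamma)\in H$, which forces the $C_{kn}$-coordinate into $\ker\pi_k=\langle x^k\rangle$, but we can arrange the coordinate to vanish — see below). More efficiently: pick a meridian $\mu$ of $K$; then $\phi(\mu)=1$, so the $C_{kn}$-coordinate of $f_n(\mu)$ is a generator $x$ of $C_{kn}$, and hence $\mathrm{Im}\,f_n$ surjects onto $C_{kn}$ under $\pr_2$. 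Combined with the fact that $\ker(\pr_2\colon G_{k,n}\to C_{kn})=H\times\{0\}$ and that $\mathrm{Im}\,f_n\supset H\times\{0\}$ — which follows because for $\gamma$ with $f_1(\gamma)\in H$ and $\phi(\gamma)$ arbitrary we can multiply $\gamma$ by a suitable power of $\mu$ without leaving the preimage of $H$ while killing the $C_{kn}$-coordinate (note $\phi(\mu^{kn})=kn\equiv 0$, but more to the point $\mathrm{Im}\,f_n$ is a subgroup surjecting onto both $H$-part fibers and onto $C_{kn}$, and a subgroup of a pull-back surjecting onto both factors and containing the "diagonal obstruction" is everything) — we conclude $\mathrm{Im}\,f_n=G_{k,n}$.

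The only subtle point, and the one I would write carefully, is the surjectivity argument: a subgroup $S\le G_{k,n}$ with $\pr_1(S)=G_{k,1}$ and $\pr_2(S)=C_{kn}$ need not equal $G_{k,n}$ in general (Goursat's lemma), so I must use the specific structure. The clean way is: $S=\mathrm{Im}\,f_n$ satisfies $\pr_2(S)=C_{kn}$ (from the meridian) and $S\cap\ker\pr_2 = S\cap(H\times\{0\})$ projects onto $H$ under $\pr_1$ — because $\pr_1(S)=G_{k,1}\supset H$ and any element of $S$ mapping into $H$ under $\pr_1$ already lies in $H\times\{0\}$ (its $C_{kn}$-coordinate is in $\ker\pi_k=\langle x^k\rangle\cong C_n$, and by subtracting an appropriate element of $S\cap(\langle x^k\rangle$-part$)$ coming from $\mu^{k}$-translates one reduces it to $0$; here one uses that $\phi(\mu^k)=k$ hits a generator of $\ker\pi_k$). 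Hence $|S|\ge |H|\cdot kn = |G_{k,n}|$, forcing $S=G_{k,n}$. I expect this bookkeeping with the cyclic coordinates to be the main (though entirely elementary) obstacle; everything else is formal from the universal property of the pull-back.
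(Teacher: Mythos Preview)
Your construction of $f_n$ as $\gamma\mapsto(f_1(\gamma),\bar\phi(\gamma))$, where $\bar\phi$ is the reduction of the abelianization $\phi$ modulo $kn$, is exactly what the paper does, and your verification that this lands in the pull-back (so that $f_n$ is a well-defined homomorphism with $\pr\circ f_n=f_1$) is fine.

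The gap is in the surjectivity argument. Your key step is the claim that $S\cap(H\times\{0\})$ projects onto $H$, and you try to prove it by taking $\gamma$ with $f_1(\gamma)=h\in H$ and then ``subtracting $\mu^k$-translates'' to kill the $C_{kn}$-coordinate. But multiplying $\gamma$ by a power of $\mu^k$ also multiplies the first coordinate by a power of $f_1(\mu)^k\in H$, which is in general nontrivial. So after the adjustment you obtain $(h\cdot f_1(\mu)^{-km},0)\in S$ for some $m$, not $(h,0)\in S$; this only shows $H=\bigcup_m T\cdot f_1(\mu)^{km}$ for $T=\pr_1(S\cap(H\times\{0\}))$, which yields $|T|\ge |H|/n$ rather than $|T|=|H|$. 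In particular your final counting inequality $|S|\ge |H|\cdot kn$ does not follow from what you have written; the Goursat obstruction you flagged is genuinely not handled.

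The clean fix (and the paper's argument) is a single observation that bypasses all of this: since $C_{kn}$ is abelian, $\bar\phi$ kills $G(K)'$, while $f_1$ is onto so $f_1(G(K)')=G_{k,1}'=H$. Hence $f_n(G(K)')=H\times\{e\}=G_{k,n}'$. Now $S=\mathrm{Im}\,f_n$ contains the commutator subgroup $G_{k,n}'$ and (via the meridian) surjects onto the abelianization $G_{k,n}/G_{k,n}'\cong C_{kn}$, so $S=G_{k,n}$.
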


\begin{proof}
Let $\bar{\phi}\colon G(K)\to C_{kn}$ be the composition of the abelianization $\phi\colon G(K)\to\Z$ and the projection $\Z\to\Z/kn\Z\cong C_{kn}$. 
We define a lift $f_n\colon G(K)\to G_{k,n}$ to be $f_n(u)=(f_1(u),\bar{\phi}(u))$ for $u\in G(K)$. It is clearly a homomorphism and satisfies $\pr\circ f_n=f_1$. Since $f_1(G(K)') = H$, we have $f_n(G(K)') = H \times \{e\} = G_{k,n}'$.
Also, since $\bar{\phi}$ is an epimorphism, there is an element $u \in
G(K)$ such that $\bar{\phi}(u) = {\rm pr}_2 \circ f_n(u) \in C_{kn}$ is a
generator of $C_{kn}$. Because ${\rm pr}_2 \colon G_{k,n} \to C_{kn}$ is
the abelianization, $G_{k,n}$ is generated by $G_{k,n}' = f_n(G(K)')$ and
$f_n(u)$, which means that $f_n$ is surjective.
\end{proof}

Let $K$ be a knot and assume that there exists an epimorphism $f_n\colon G(K) \to G_{k,n}$. Then we have the following formula for the twisted Alexander polynomial associated with the regular representation $\rho_n\colon G_{k,n}\to \mathrm{GL}(kn|H|,\C)$. For simplicity, let us denote by $\tilde{\rho}_1$ the composition of $\pr\colon G_{k,n}\to G_{k,1}$ and the regular representation $\rho_1\colon G_{k,1}\to \mathrm{GL}(k|H|,\C)$. 

\begin{theorem}[Theorem \ref{thm:main-9}]\label{thm:semi-direct}
$\displaystyle{
\D_K^{\rho_n\circ f_n}(t)
=
\prod_{l=0}^{n-1}
\D_K^{\tilde{\rho}_1 \circ f_n}\big(e^{\frac{2l\pi i}{kn}}t\big).}$
\end{theorem}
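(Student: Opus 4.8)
The plan is to compute both sides via the homology of the same chain complex and identify the regular representation $\rho_n$ of $G_{k,n}$ with an induced (equivalently, a direct-sum) decomposition over the central subgroup $C_n = \langle y \rangle$. The key observation is that $G_{k,n}$ is a central extension $1 \to C_n \to G_{k,n} \xrightarrow{\pr} G_{k,1} \to 1$, so the group algebra $\C[G_{k,n}]$, as a $C_n$-bimodule, splits as a direct sum of $n$ one-dimensional characters of $C_n$. Concretely, writing $\omega = e^{2\pi i/n}$, for each $l = 0, \dots, n-1$ let $\C_l$ be the $\C[G_{k,n}]$-module that is $\C[G_{k,1}]$ as a vector space, with $G_{k,n}$ acting through $\pr$ together with the scalar $\omega^l$ on the central generator $y$. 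Since $\iota(y) = (e, x^k) \in G_{k,1} \times C_{kn}$ is sent by $\pr_2$ (the abelianization $\bar\phi \circ$) to $x^k$, i.e. to a $kn/k = n$-th root of unity worth of the abelianization, the twist by $\omega^l$ on $y$ translates under $f_n$ and $\phi$ precisely into a substitution $t \mapsto \zeta t$ for a suitable $kn$-th root $\zeta$. First I would make this identification precise: $\C[G_{k,n}] \cong \bigoplus_{l=0}^{n-1} \C_l$ as $\C[G_{k,n}]$-modules, so the regular representation $\rho_n$ is the direct sum of the $n$ representations $\rho_n^{(l)} := (\text{action on } \C_l)$.

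Next I would unravel each summand. The module $\C_l$ is, by construction, the composition of $\pr \colon G_{k,n} \to G_{k,1}$ with the regular representation $\rho_1$ of $G_{k,1}$, \emph{except} that the action is further tensored by the character of $G_{k,n}$ sending $y \mapsto \omega^l$ and everything in $G_{k,n}'$ to $1$. A character of $G_{k,n}$ trivial on $G_{k,n}'$ factors through the abelianization $\pr_2 \colon G_{k,n} \to C_{kn}$; the one taking $y = \iota(1)$ to $\omega^l = e^{2\pi i l/n}$ is $\chi_l(x^j) = e^{2\pi i l j/(kn)}$ since $\pr_2(\iota(1)) = x^k$. Composing with $f_n$ and recalling $\pr_2 \circ f_n = \bar\phi$ (the mod-$kn$ abelianization of $K$), the tensor twist by $\chi_l \circ f_n$ is exactly the operation that multiplies the Alexander variable by $e^{2\pi i l/(kn)}$. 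Hence, for each $l$,
$$
\D_K^{\rho_n^{(l)} \circ f_n}(t) = \D_K^{\tilde\rho_1 \circ f_n}\bigl(e^{2\pi i l/(kn)}\, t\bigr).
$$
Here one must check that twisting a representation by a $\phi$-dependent abelian character has the claimed effect on the twisted Alexander polynomial; this is a standard manipulation with the Fox-derivative / Wada-invariant definition, using that the character is a one-dimensional representation factoring through $\phi$, so that $(\rho \otimes \chi) \otimes \phi$ agrees with $\rho \otimes \phi$ after the substitution $t \mapsto \zeta t$, and the order of the first homology transforms accordingly.

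Finally I would assemble the pieces: since $\rho_n \circ f_n = \bigoplus_{l=0}^{n-1} \rho_n^{(l)} \circ f_n$, the chain complex $C_*(E_K; \C[t^{\pm1}]^{kn|H|})$ splits as the direct sum of the complexes for the summands, so $H_1$ splits accordingly and its order (the twisted Alexander polynomial) is the product of the orders of the summands:
$$
\D_K^{\rho_n \circ f_n}(t) = \prod_{l=0}^{n-1} \D_K^{\rho_n^{(l)} \circ f_n}(t) = \prod_{l=0}^{n-1} \D_K^{\tilde\rho_1 \circ f_n}\bigl(e^{2l\pi i/(kn)}\, t\bigr),
$$
which is the asserted formula. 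One subtlety to address is that $\D$ is only well defined up to a unit in $\C[t^{\pm1}]$; this is harmless since the factorization of the order of a direct sum as the product of the orders holds up to units, and both sides are polynomials in $t$ (no $t^{\pm1}$ ambiguity after normalization). The step I expect to be the main obstacle is the careful verification of the middle identity, i.e. that tensoring the representation $\tilde\rho_1 \circ f_n$ with the abelian character $\chi_l \circ f_n$ (which is \emph{not} trivial but factors through $\phi$) produces exactly the substitution $t \mapsto e^{2\pi i l/(kn)} t$ in the twisted Alexander polynomial, together with pinning down the precise root of unity (the factor $k$ in the exponent $kn$ comes from $\pr_2(\iota(1)) = x^k$, and getting this bookkeeping right is the crux of the whole argument).
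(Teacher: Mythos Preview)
Your proposal is correct and follows essentially the same route as the paper: both arguments decompose the regular representation $\rho_n$ of $G_{k,n}$ as $\bigoplus_{l=0}^{n-1}\tau_{n,l}$ with $\tau_{n,l}=\tilde\rho_1\otimes\chi_l$ for the characters $\chi_l$ of $G_{k,n}$ pulled back from $C_{kn}$ via $\pr_2$, then use multiplicativity of the twisted Alexander polynomial under direct sums together with the substitution rule $\D_K^{\chi_l\otimes\tilde\rho_1\circ f_n}(t)=\D_K^{\tilde\rho_1\circ f_n}(e^{2\pi i l/(kn)}t)$. The only cosmetic difference is that the paper establishes the decomposition $\rho_n\cong\bigoplus_l\tau_{n,l}$ by a direct character computation (its Proposition~\ref{pro:regular2}), whereas you obtain it from the eigenspace decomposition of $\C[G_{k,n}]$ under the central element $y$; these are equivalent verifications of the same isomorphism.
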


Namely, the twisted Alexander polynomial of the $n$-th central extension $G_{k,n}$ is determined by that of $G_{k,1}$. Note that the above formula also holds when $G_{k,n}$ is not a TAV group. Using Proposition \ref{pro:quotient}, Lemma \ref{lem:lifting} and Theorem \ref{thm:semi-direct}, we can obtain the following corollary:

\begin{corollary}\label{cor:smallest}
The following two conditions are equivalent:
\begin{itemize}
\item[(i)]
There exists an epimorphism $f_1\colon G(K) \to G_{k,1}$ such that $\D_K^{\rho_1\circ f_1}(t)=0$.
\item[(ii)]
There exists an epimorphism $f_n\colon G(K) \to G_{k,n}$ such that $\D_K^{\rho_n\circ f_n}(t)=0$.
\end{itemize}
In particular, the set $\mathcal{V}(G_{k,1})$ coincides with $\mathcal{V}(G_{k,n})$ for any $n\geq 2$. Accordingly, if $G_{k,n}~(n\geq2)$ is a TAV group, then it is not the smallest for any non-fibered knots. 
\end{corollary}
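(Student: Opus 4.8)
The plan is to establish the equivalence of (i) and (ii) by combining the transfer principle for quotient groups (Proposition \ref{pro:quotient}), the lifting lemma (Lemma \ref{lem:lifting}), and the product formula (Theorem \ref{thm:semi-direct}). First I would prove the implication (ii) $\Rightarrow$ (i): if $f_n\colon G(K)\to G_{k,n}$ is an epimorphism with $\D_K^{\rho_n\circ f_n}(t)=0$, then setting $f_1=\pr\circ f_n\colon G(K)\to G_{k,1}$ gives an epimorphism (since $\pr$ is surjective), and by Lemma \ref{lem:quotient} the polynomial $\D_K^{\tilde\rho_1\circ f_n}(t)=\D_K^{\rho_1\circ f_1}(t)$ divides $\D_K^{\rho_n\circ f_n}(t)=0$, forcing $\D_K^{\rho_1\circ f_1}(t)=0$. (Alternatively, one reads this off directly from the product formula of Theorem \ref{thm:semi-direct}: each factor on the right-hand side is of the form $\D_K^{\tilde\rho_1\circ f_n}(e^{2l\pi i/kn}t)$, and a product of polynomials over a UFD vanishes only if one factor vanishes, but substituting a unit $e^{2l\pi i/kn}t$ for $t$ is an automorphism of $\C[t^{\pm1}]$, so every factor vanishes iff $\D_K^{\tilde\rho_1\circ f_n}(t)\equiv 0$.)

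For the converse (i) $\Rightarrow$ (ii), suppose $f_1\colon G(K)\to G_{k,1}$ is an epimorphism with $\D_K^{\rho_1\circ f_1}(t)=0$. By Lemma \ref{lem:lifting} there is an epimorphism $f_n\colon G(K)\to G_{k,n}$ with $\pr\circ f_n=f_1$, so $\tilde\rho_1\circ f_n=\rho_1\circ\pr\circ f_n=\rho_1\circ f_1$. Hence $\D_K^{\tilde\rho_1\circ f_n}(t)=\D_K^{\rho_1\circ f_1}(t)=0$, and Theorem \ref{thm:semi-direct} gives $\D_K^{\rho_n\circ f_n}(t)=\prod_{l=0}^{n-1}0=0$. (One could equally invoke Proposition \ref{pro:quotient} with $H=C_n$, $G=G_{k,n}$, $G/H=G_{k,1}$, $\overline{f}=f_1$, and $f=f_n$.) This establishes the equivalence, and the statement $\mathcal{V}(G_{k,1})=\mathcal{V}(G_{k,n})$ is just the restatement of this equivalence for all knots $K$.

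Finally, for the last sentence: if $G_{k,n}$ with $n\geq 2$ is a TAV group and $K$ is any non-fibered knot admitting $G_{k,n}$ as a TAV group, then by the equivalence just proved $K$ also admits $G_{k,1}$ as a TAV group; since $|G_{k,1}|=k|H|<kn|H|=|G_{k,n}|$, the group $G_{k,n}$ cannot be a \emph{smallest} TAV group of $K$. As $K$ was arbitrary, $G_{k,n}$ is never the smallest TAV group for any non-fibered knot. I do not expect any serious obstacle here: all the machinery is already in place, and the only small point to be careful about is that substituting a nonzero scalar multiple $e^{2l\pi i/kn}t$ for $t$ is a ring automorphism of $\C[t^{\pm1}]$ — in particular it preserves the property of being the zero polynomial and is compatible with divisibility — so the product formula transports vanishing in both directions cleanly.
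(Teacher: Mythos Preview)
Your argument is correct and follows exactly the route indicated in the paper, which simply states that the corollary is obtained ``using Proposition~\ref{pro:quotient}, Lemma~\ref{lem:lifting} and Theorem~\ref{thm:semi-direct}'' without spelling out the details. You have filled in precisely those details, including the observation that the substitution $t\mapsto e^{2l\pi i/kn}t$ is an automorphism of $\C[t^{\pm1}]$, so nothing further is needed.
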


Before proving Theorem \ref{thm:semi-direct}, let us see some examples. 

\begin{example}\label{ex:extension}
(i) Let $H=C_q$ be the cyclic group of odd order $q$ and $k=2$. Then the group $C_q\rtimes C_{2n}$ is a central extension of the dihedral group $D_q=C_q\rtimes C _2$:
$$
1\longrightarrow C_n \longrightarrow C_q\rtimes C_{2n} \overset{\pr}{\longrightarrow} D_q \longrightarrow 1.
$$
In particular, when $n=2$, $C_q\rtimes C_4$ is the dicyclic group $\mathrm{Dic}_q$ appeared in Section \ref{sec:2}. Using Theorem \ref{thm:semi-direct}, we have 
$$
\D_K^{\rho_2\circ f_2}(t)
=
\D_K^{\tilde{\rho}_1 \circ f_2}(t)\cdot
\D_K^{\tilde{\rho}_1 \circ f_2}\left(it\right).
$$
Similarly, when $n=3$, 
$$
\D_K^{\rho_3\circ f_3}(t)
=
\D_K^{\tilde{\rho}_1 \circ f_3}(t)\cdot
\D_K^{\tilde{\rho}_1 \circ f_3}\big(e^{\frac{\pi i}{3}}t\big)\cdot
\D_K^{\tilde{\rho}_1 \circ f_3}\big(e^{\frac{2\pi i}{3}}t\big)
$$
holds. Hence, Corollary \ref{cor:smallest} implies that 
$\mathcal{V}(D_q)=\mathcal{V}(\mathrm{Dic}_q)=\mathcal{V}(C_q\rtimes C_6)$ hold.  

(ii) Let $H$ be the alternating group $A_4$. Then we have a central extension 
of the symmetric group $S_4=A_4\rtimes C_2$:
$$
1\longrightarrow C_n \longrightarrow A_4\rtimes C_{2n} \overset{\pr}{\longrightarrow} S_4 \longrightarrow 1.
$$
Thus, Corollary \ref{cor:smallest} implies that 
$\mathcal{V}(S_4)=\mathcal{V}(A_4\rtimes C_{2n})$ holds for $n\geq 2$. 

(iii) Let $H$ be $\mathrm{SL}_2(\F_3)$, and $G_{k,1}$ a non-split extension of $C_2$ by $\mathrm{SL}_2(\F_3)$; i.e. $G_{k,1}=\mathrm{SL}_2(\F_3).C_2$. Then we have a central extension $G_{k,n}=\mathrm{SL}_2(\F_3).C_{2n}$ of $G_{k,1}$: 
$$
1\longrightarrow C_{n} \longrightarrow G_{k,n} \overset{\pr}{\longrightarrow} G_{k,1} \longrightarrow 1.
$$
Hence, Corollary \ref{cor:smallest} implies that $\mathcal{V}(G_{k,1})=\mathcal{V}(G_{k,n})$ holds for $n\geq2$. 
\end{example}

Let $\eta_l\colon C_{kn}\to \mathrm{GL}(1,\C)$ be a one-dimensional representation defined by $\eta_l(x)=\exp\big(\frac{2l\pi i}{kn}\big)$, and $\tau_l\colon G_{k,n}\to \mathrm{GL}(1,\C)$ the composition of $\pr_2\colon G_{k,n}\to C_{kn}$ and $\eta_l$. We then define a representation $\tau_{n,l}\colon G_{k,n}\to \mathrm{GL}(k|H|,\C)$ by $\tau_{n,l}=\tau_l\otimes \tilde{\rho}_1$. It is easy to see that 
$$
\tau_{n,l}(z,x^j)=(\tau_l\otimes\tilde{\rho}_1)(z,x^j)
=\tau_l(z,x^j)\otimes\tilde{\rho}_1(z,x^j)
=\eta_l(x^j)\otimes\rho_1(z)
$$
hold for $(z,x^j)\in G_{k,n}$. 

\begin{proposition}\label{pro:regular2}
The representation $\bigoplus_{l=0}^{n-1}\tau_{n,l}$ is conjugate to 
the regular representation $\rho_{n}\colon G_{k,n}\to \mathrm{GL}(kn|H|,\C)$. 
\end{proposition}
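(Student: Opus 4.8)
\emph{Proof proposal.} The plan is to establish the equivalence of the two representations by comparing their characters; since a complex representation is determined up to conjugacy in $\mathrm{GL}(kn|H|,\C)$ by its character, matching characters suffices. First I would record the dimension count
$\dim\bigl(\bigoplus_{l=0}^{n-1}\tau_{n,l}\bigr)=n\cdot k|H|=kn|H|=|G_{k,n}|=\dim\rho_n$,
so that it is enough to show that $\chi:=\sum_{l=0}^{n-1}\chi_{\tau_{n,l}}$ is the character of the regular representation of $G_{k,n}$, i.e. $\chi(e,e)=kn|H|$ while $\chi(z,x^j)=0$ for every other $(z,x^j)\in G_{k,n}$.

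Next I would evaluate each $\chi_{\tau_{n,l}}$ pointwise. From the identity $\tau_{n,l}(z,x^j)=\eta_l(x^j)\otimes\rho_1(z)$ recorded above one gets $\chi_{\tau_{n,l}}(z,x^j)=e^{2l\pi i j/(kn)}\,\chi_{\rho_1}(z)$, where $\chi_{\rho_1}$ is the character of the regular representation of $G_{k,1}$, which is supported only at $e\in G_{k,1}$ with value $|G_{k,1}|=k|H|$. Hence $\chi_{\tau_{n,l}}(z,x^j)=e^{2l\pi i j/(kn)}\cdot k|H|$ if $z=e$ and $0$ otherwise. Summing over $l$, the only surviving contributions come from elements with $z=e$, and for such an element the defining relation $\pi(z)=\pi_k(x^j)$ of the pull-back forces $\pi_k(x^j)=e$, i.e. $k\mid j$; writing $j=km$ with $0\le m<n$ we obtain $\sum_{l=0}^{n-1}e^{2l\pi i j/(kn)}=\sum_{l=0}^{n-1}e^{2l\pi i m/n}$, which equals $n$ when $m=0$ and $0$ when $m\neq0$. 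Since the conditions $z=e$ and $m=0$ together single out the identity $(e,e)$, this yields $\chi(e,e)=n\cdot k|H|=kn|H|$ and $\chi=0$ elsewhere, which completes the argument.

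The computation is essentially routine; the one point that deserves care is the step where $z=e$ is combined with the pull-back relation to conclude $x^j\in\langle x^k\rangle$, since it is precisely this that collapses the otherwise non-trivial geometric sum $\sum_l e^{2l\pi i j/(kn)}$ into the orthogonality sum $\sum_l e^{2l\pi i m/n}$. Equivalently, one may note that the restrictions $\eta_0|_{\langle x^k\rangle},\dots,\eta_{n-1}|_{\langle x^k\rangle}$ exhaust the $n$ characters of the central subgroup $C_n=\langle x^k\rangle$, so that $\bigoplus_{l}\tau_{n,l}$ restricted to $C_n$ is $k|H|$ copies of its regular representation, and then the facts that $\chi_{\rho_1}$ is supported only at $e\in G_{k,1}$ and $\pr^{-1}(e)=C_n$ give the vanishing of $\chi$ off the identity at once. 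I do not anticipate any genuine obstacle beyond bookkeeping this identification.
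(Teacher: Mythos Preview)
Your argument is correct and mirrors the paper's own proof essentially line by line: both compare characters, use that the regular character of $G_{k,1}$ is supported at $e$, invoke the pull-back relation to force $k\mid j$ when $z=e$, and then evaluate the resulting geometric sum of roots of unity (which the paper isolates as a separate lemma). Your closing alternative viewpoint via restriction to $C_n$ is a pleasant addendum but not a departure from the paper's method.
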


\begin{proof}
Note that for a finite group $G$, a given representation $\rho\colon G\to\mathrm{GL}(|G|,\C)$ is conjugate to the regular representation of $G$ if and only if the character $\chi_\rho$ satisfies $\chi_\rho(x)=|G|$, if $x=e$, and $\chi_\rho(x)=0$, otherwise. 
For $(z,x^j)\in G_{k,n}~(0\leq j\leq kn-1)$, we have 
\begin{align*}
\chi_{\bigoplus_{l=0}^{n-1}\tau_{n,l}}(z,x^j)
&=
\sum_{l=0}^{n-1}\chi_{\tau_{n,l}}(z,x^j)
=
\sum_{l=0}^{n-1}\mathrm{tr}\big(\tau_{n,l}(z,x^j)\big)\\
&=
\sum_{l=0}^{n-1}\mathrm{tr}\big(\eta_l(x^j)\otimes\rho_1(z)\big)\\
&=
\sum_{l=0}^{n-1}\mathrm{tr}\big(\large(e^{\frac{2l\pi i}{kn}}\large)^j\cdot \rho_1(z)\big)
=
\sum_{l=0}^{n-1}
e^{\frac{2jl\pi i}{kn}}\mathrm{tr}\large(\rho_1(z)\large)\\
&=
\begin{cases}
\displaystyle{
\sum_{l=0}^{n-1}e^{\frac{2\bar{j}l\pi i}{n}}k|H|
}
, & \text{if $z=e$ and $j=k\bar{j}~(0\leq \bar{j}\leq n-1)$}\\
0, & \text{otherwise}
\end{cases}\\
&=
\begin{cases}
kn|H|, & \text{if $z=e$ and $j=0$ by Lemma \ref{lem:sum} below}\\
0, & \text{otherwise}.
\end{cases}
\end{align*}
This completes the proof of Proposition \ref{pro:regular2}.
\end{proof}

\begin{lemma}\label{lem:sum}
$\displaystyle{
\sum_{l=0}^{n-1}e^{\frac{2dl\pi i}{n}}
=
\begin{cases}
n,& d=0\\
0,& 0<d<n
\end{cases}
}$
\end{lemma}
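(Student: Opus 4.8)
The plan is to recognize the sum $\sum_{l=0}^{n-1} e^{2 d l \pi i / n} = \sum_{l=0}^{n-1} \zeta^{l}$ as a finite geometric series in the root of unity $\zeta = e^{2 d \pi i / n}$, and to split into the two stated cases according to whether $\zeta = 1$ or not. First I would treat the case $d = 0$: then every summand equals $e^{0} = 1$, so the sum is $\sum_{l=0}^{n-1} 1 = n$, giving the first line. For the case $0 < d < n$, the key observation is that $\zeta = e^{2 d \pi i / n} \neq 1$, since $d/n$ is not an integer; hence the geometric series formula applies and gives
$$
\sum_{l=0}^{n-1} \zeta^{l} = \frac{\zeta^{n} - 1}{\zeta - 1}.
$$
Now $\zeta^{n} = e^{2 d \pi i} = 1$ because $d \in \Z$, so the numerator vanishes while the denominator does not, and the sum is $0$, giving the second line.

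The only point requiring a word of care is the hypothesis needed to invoke the geometric series identity, namely $\zeta \neq 1$; this is exactly where the restriction $0 < d < n$ (rather than merely $d \not\equiv 0$) is used, though in fact the same argument shows the sum vanishes whenever $d \not\equiv 0 \pmod n$. I do not anticipate any genuine obstacle: the statement is the standard orthogonality relation for characters of the cyclic group $C_n$, and the computation above is complete as written once the two cases are separated.
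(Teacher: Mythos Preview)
Your proof is correct and is essentially the same argument as the paper's: both treat the $d=0$ case directly and, for $0<d<n$, use the geometric series identity $(\zeta-1)\sum_{l=0}^{n-1}\zeta^l=\zeta^n-1$ with $\zeta=e^{2d\pi i/n}\neq1$ to conclude the sum vanishes.
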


\begin{proof}
The case $d=0$ is clear. When $0<d<n$, we have $\big(\large(e^{\frac{2l\pi i}{n}}\large)^d\big)^n=e^{2dl\pi i}=1$ and $\large(e^{\frac{2l\pi i}{n}}\large)^d\not=1$. 
In general, suppose that $(\eta^d)^n=1$ and $\eta^d\not=1$, then 
we have $\sum_{l=0}^{n-1}(\eta^d)^l=0$ because $(\eta^d-1)\big((\eta^d)^{n-1}+(\eta^d)^{n-2}+\cdots+\eta^d+1\big)=0$ holds.
\end{proof}

Let us provide a proof of Theorem \ref{thm:semi-direct}. 
By Proposition \ref{pro:regular2}, we see that the composition $\rho_n\circ f_n$ is conjugate to 
\begin{align*}
\big(
\bigoplus_{l=0}^{n-1}
\tau_{n,l}
\big)\circ f_n
=
\big(
\bigoplus_{l=0}^{n-1}
\tau_l\otimes\tilde{\rho}_1
\big)\circ f_n
=
\bigoplus_{l=0}^{n-1}(\tau_l\circ f_n)\otimes
(\tilde{\rho}_1 \circ f_n).
\end{align*}
Hence, 
\begin{align*}
\D_K^{\rho_n\circ f_n}(t)
&=
\prod_{l=0}^{n-1}\D_K^{(\tau_l\circ f_n)\otimes(\tilde{\rho}_1 \circ f_n)}(t)
=
\prod_{l=0}^{n-1}\D_K^{\tilde{\rho}_1 \circ f_n}\big(
e^{\frac{2l\pi i}{kn}}t\big)
\end{align*}
hold as desired.

Finally we end this section with a characterization of seeds of a series of central extensions. 

\begin{proposition}\label{pro:seed}
Let $G$ be a finite group of weight one. Then $G$ is not a seed if and only if there exists a nontrivial cyclic group $C$ in the center $Z(G)$ such that $C\cap G'=\{e\}$. 
\end{proposition}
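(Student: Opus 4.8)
The plan is to analyze the structure of the $n$-th central extension $G_{k,n}$ of $G_{k,1}$ directly from its construction as a pull-back, and to recognize that being "not a seed'' means precisely that $G$ itself can be realized as $G_{k,n}$ for some group in the position of $G_{k,1}$ and some $n \geq 2$. First I would unwind the definitions: saying $G = G_{k,n}$ for some seed-candidate $G_{k,1}$ and $n \geq 2$ forces, by the construction, a central subgroup $C \cong C_n$ (the image of $\iota$) with $C \cap G' = \{e\}$, since the commutator subgroup $G_{k,n}' = H \times \{e\}$ meets $\iota(C_n) = \{e\} \times \langle x^k \rangle$ trivially, and $C$ is nontrivial because $n \geq 2$. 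This gives the "only if'' direction immediately.

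For the "if'' direction, suppose $C \leq Z(G)$ is a nontrivial cyclic subgroup with $C \cap G' = \{e\}$; write $n = |C| \geq 2$. Since $G$ has weight one, its abelianization is cyclic, say $G/G' \cong C_m$ with $m = k n$ after I check divisibility; more carefully, since $C \cap G' = \{e\}$ and $C$ is central, the image of $C$ in $G/G'$ is a cyclic subgroup of order $n$, so $n \mid m$ and I set $k = m/n$. The candidate seed is $G_{k,1} := G/C$. I would then verify: $G_{k,1}$ has weight one (its abelianization is $G/(G'C) \cong C_k$, since $G' C / C$ is the commutator subgroup of $G/C$ and the quotient by it is $C_m / (\text{image of } C) \cong C_k$); the commutator subgroup of $G_{k,1}$ is $H := G'C/C \cong G'$ because $C \cap G' = \{e\}$; and $G$ sits in a central extension $1 \to C \to G \to G_{k,1} \to 1$. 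The remaining task is to identify $G$ with the pull-back $G_{k,n}$ of $\pi\colon G_{k,1} \to C_k$ (the abelianization) and $\pi_k \colon C_{kn} \to C_k$: the map $G \to G_{k,1} \times C_{kn}$ given by $(\text{projection},\ \text{abelianization of } G)$ lands in the pull-back, is injective because an element killed by both maps lies in $C \cap G' = \{e\}$ wait — killed by the projection means it lies in $C$, killed by the abelianization means it lies in $G'$, so it lies in $C \cap G' = \{e\}$ — and is surjective by a cardinality count, $|G| = m|G'|/1 = kn|H|$, matching $|G_{k,n}|$. Hence $G \cong G_{k,n}$ with $n \geq 2$, so $G$ is not a seed.

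The main obstacle I anticipate is the bookkeeping in the "if'' direction: one must be careful that $G_{k,1} = G/C$ genuinely has abelianization $C_k$ (not something smaller), which amounts to checking that $G' C / C$ really is all of the commutator subgroup of $G/C$ and that the induced map on abelianizations $G/G' \to (G/C)/(G'C/C)$ has kernel exactly the image of $C$ — this uses centrality of $C$ together with $C \cap G' = \{e\}$ in an essential way. A secondary subtlety is confirming that the abelianization of $G$, not just some quotient, provides the second coordinate $C_{kn}$ of the pull-back correctly; here I would use that $G/G'$ is cyclic of order $kn$ (weight one) and that the composite $G \to G/G' \twoheadrightarrow C_k$ agrees with $G \to G/C \to (G/C)^{\mathrm{ab}}$ under the identifications. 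Once these compatibilities are in place, the isomorphism $G \cong G_{k,n}$ is forced, and the proposition follows.
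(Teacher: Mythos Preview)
Your proposal is correct and follows the same route as the paper: for the ``only if'' direction you take $C=\iota(C_n)$ and check it is central and meets $G_{k,n}'=H\times\{e\}$ trivially, exactly as the paper does; for the ``if'' direction you set $G_{k,1}=G/C$ and recognize $G$ as the $n$-th central extension of $G_{k,1}$. The paper's converse is stated very tersely (it simply says ``$G$ is a central extension of $G_{k,1}$ by $C$'' without verifying that this central extension is the specific pull-back $G_{k,n}$), whereas you carry out that identification explicitly via the map $g\mapsto(gC,\,gG')$ into $G_{k,1}\times C_{kn}$, checking it lands in the pull-back, is injective by $C\cap G'=\{e\}$, and is bijective by a cardinality count---so your argument is in fact a fleshed-out version of the paper's.
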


\begin{proof}
Assume that $G=G_{k,n}~(n\geq 2)$. Let $C_n$ be a subgroup of $G_{k,n}$ generated by $(e,x^k)$. Then $C_n\subset Z(G_{k,n})$ and $C_n\cap H=\{e\}$ hold, where $H$ is the commutator subgroup of $G_{k,n}$. 

Conversely, if there exists a nontrivial cyclic group $C$ in $Z(G)$ such that $C\cap G'=\{e\}$, we can define the group $G_{k,1}$ by $G/C$. Namely, $G$ is a central extension of $G_{k,1}$ by $C$. 
\end{proof}

\section{Concluding remark}\label{sec:5}

In this section, we discuss (I) the image of the TAV order function $\ord\colon\mathcal{K}\to\N$, (II) some numerical invariants of TAV groups, and (III) a relationship between the TAV order and the invariant coming from the circle valued Morse function. Moreover, we explain (IV) a question on the twisted Alexander polynomials of $3$-manifold groups associated with representations of finite groups. 

{\bf (I)} 
Using Theorem \ref{thm:main-4}, we know that the intersection of the image of the TAV order function $\ord|_{\mathcal{N}} \colon \mathcal{N}\to\mathbb{N}$ 
and the closed interval $[1,200]$ is contained in the finite set 
\begin{align*}
\{&24,30,42,48,60,66,70,72,78,84,90,96,102,110,114,120,126,130,132,\\
&138, 140,144,150,154,156,160,168,170,174,180,182,186,190,192,198\}
\end{align*}
consisting of the orders of TAV groups $G$ with $1\leq|G|\leq200$ 
(see Tables 1 to 3).  
However, it is not known whether these values are actually realized as the TAV orders of 
non-fibered knots. We then proposed the following problem in our previous paper \cite{IMS23-1}. 

\begin{problem}[{\cite[Problem~5.4]{IMS23-1}}]\label{problem:image}
Determine the image of the TAV order function $\ord|_{\mathcal{N}} \colon \mathcal{N}\to\mathbb{N}$. 
\end{problem}

We see from Theorems \ref{thm:main-2} and \ref{thm:main-6} that $\mathrm{Im}\, \ord$ contains at least the finite set 
\begin{align*}
\{&\underline{24},30,42,\underline{60},66,70,78,84,\underline{96},102,110,114,\underline{120},130,\\
&138, 154,\underline{168},170,174,182,186,190\}
\end{align*}
where the orders with underline (e.g., $\underline{24}, \underline{60}$) come from Theorem \ref{thm:main-2}. Here, we note that the order $84$ is realized by a knot which is constructed by using Theorem \ref{const-thm} (see the appendix for details). As for the concrete TAV groups of these orders in the above finite set, see Tables 1 to 3 in the appendix. 

As shown in Corollary \ref{cor:smallest}, there are several series of TAV groups $G_{k,n}$ such that $\mathcal{V}(G_{k,1})=\mathcal{V}(G_{k,n})$ holds for $n\geq2$. Thus, the TAV group $G_{k,n}~(n\geq2)$ is not the smallest for any non-fibered knots. Using this fact, we find the following important fact on $\ord(K)$. 

\begin{example}\label{ex:dic33}
Let us consider the dicyclic group $G=\mathrm{Dic}_{33}$. This is a central extension of the dihedral group $D_{33}$ and the unique TAV group of order $132$ (see Table 2). Hence, we can conclude that the order $132$ is \textit{not} contained in the image of the TAV order function $\ord|_{\mathcal{N}} \colon \mathcal{N}\to\mathbb{N}$, because $\mathcal{V}(\mathrm{Dic}_{33})=\mathcal{V}(D_{33})$ holds. 
\end{example}

{\bf (II)} 
As with the crossing number $c(G)$ and the bridge number $b(G)$ of a TAV group $G$, we can use numerical invariants of knots to introduce new invariants of $G$. For example, we can define the \textit{genus of a TAV group} $G$ to be the minimal genus $g(K)$ of knots $K$ contained in $\mathcal{V}(G)$; $g(G):=\min_{K\in\mathcal{V}(G)}\{g(K)\}$. 
Since every nontrivial knot has the minimal genus at least one, $g(G)\geq 1$ holds for any TAV group $G$. In view of Corollary \ref{cor:smallest}, 
for a central extension $G_{k,n}$ of $G_{k,1}$ by $C_n$, we see that $c(G_{k,n})=c(G_{k,1}),~b(G_{k,n})=b(G_{k,1})$, and $ g(G_{k,n})=g(G_{k,1})$ hold for $n\geq2$. 

\begin{question}
Determine $c(G),b(G)$ and $g(G)$ for a given TAV group $G$. In particular, 
is $g(G)$ one for any TAV group $G$?
\end{question}

We show the crossing number $c(G)$ for some TAV groups $G$ of order less than $201$ in the appendix of this paper. We plan to address this question systematically in future work. 

{\bf (III)} 
Given an oriented knot $K$ in $S^3$, the \textit{Morse-Novikov number} $\MN(K)$ of $K$ is the count of the minimum number of critical points among regular Morse functions $h\colon S^3\setminus K\to S^1$ (see \cite{PRW01-1}). It is known that $\MN(K)=0$ if and only if $K$ is fibered. Since the Euler characteristic of the complement of $K$ is zero, $\MN(K)$ is always even. Moreover, Baker shows in \cite{Baker21-1} that the Morse-Novikov number of knots is additive under connected sum and unchanged by cabling. 
To be precise, the following equalities hold: 
\begin{itemize}
\item
(\cite[Theorem 1.1]{Baker21-1})
If $K=K_1\#K_2$ is a connected sum of two oriented knots $K_1$ and $K_2$ in $S^3$, then $\MN(K)=\MN(K_1)+\MN(K_2)$.
\item
(\cite[Theorem 1.3]{Baker21-1})
Let $K_{p,q}$ be the $(p,q)$-cable of the knot $K$ for coprime integers $p,q$ with $p>0$. Then $\MN(K_{p,q})=\MN(K)$.
\end{itemize}

In a sense, the Morse-Novikov number of $K$ measures how far a given knot $K$ is from a fibered knot. Accordingly, it is natural to raise the following question: 
\textit{Is there any relationship between $\ord(K)$ and $\MN(K)$?}

\begin{example}
There is a knot $K$ with $\MN(K)=2$ and $\ord(K)=24$. In fact, we may take $K=9_{35}$ or $9_{46}$ as an example (see \cite{Goda06-1} and Theorem \ref{thm:main-2}(i)). This shows that $K$ is the closest to a fibered knot in the sence of the Morse-Novikov number, but it is the farthest from a fibered knot in view of the TAV order (see Theorem \ref{thm:main-1}(i) and the definition of $\ord(K)$ for a fibered knot). 

Moreover, using the results of Baker mentioned above, for any even integer $n=2m>0$, we can find a knot $K_n$ with $\MN(K_n)=n$ and $\ord(K_n)>n$. Namely, $K_n$ is arbitrary far from a fibered knot in the sense of the Morse-Novikov number, but $K_n$ approches a fibered knot arbitrarily in view of the TAV order. Actually, we only have to take $K_n$ as the $(n!,1)$-cable of the connected sum $\#_m K$ for $K=9_{35}$ or $9_{46}$ (see \cite[Proposition 2.3]{IMS23-1} and the proof of \cite[Theorem 1.2(ii)]{IMS23-1}). 

Unlike the Morse-Novikov number $\mathcal{MN}(K)$, the total order of $\mathrm{Im}\,\ord|_{\mathcal{N}}$ in $\N$ does not seem to behave well with respect to the hierarchy of non-fibered knots. 
\end{example}

{\bf (IV)} 
Finally, we repost a related question on representations of $3$-manifold groups from our previous paper \cite{IMS23-1}. 
For a compact, orientable, connected $3$-manifold 
$N$ with toroidal or empty boundary, if $\phi\in H^1(N;\Z)=\mathrm{Hom}(\pi_1(N),\Z)$ is 
a non-fibered class, 
then the \textit{twisted Alexander vanishing (TAV) order} $\ord(N,\phi)$ is defined 
to be the smallest order of a finite group $G$ such that 
there exists an epimorphism $f \colon \pi_1(N)\to G$ 
with $\D_{N,\phi}^{\rho\circ f}(t)=0$ (see also \cite{MS22-1}). 
Here, in the case of the complement of a $k$-component link in $S^3$, 
Gonz\'{a}lez-Acu\~{n}a shows that there is an epimorphism from the link group onto a group $G$ if and only if it is finitely generated and has weight at most $k$ (see \cite[Theorem 3]{GA75-1}). 

Since nothing is known for the TAV order $\ord(N,\phi)$ of $3$-manifold groups so far, we propose the following question for further study:

\begin{question}[{\cite[Problem~5.5]{IMS23-1}}]
Study the basic properties of $\ord(N,\phi)$ as described in Theorem~\ref{thm:main-1}. 
\end{question}

Recently, Hughes and Kielak introduced the notion of \textit{TAP groups} (standing for twisted Alexander polynomial), which are groups where the twisted Alexander polynomials associated with representations of finite groups control algebraic fibering (see \cite{HK25-1} for details). An epimorphism $\phi\colon G\to\Z$ is said to be \textit{algebraically fibered} if $\ker\phi$ is finitely generated. A group $G$ is \textit{algebraically fibered} if it admits such an epimorphism. Consequently, developing the theory of TAV groups and TAV orders within this framework appears to be both a challeging and interesting task. 

\section*{Appendix}

In this appendix, we provide a list of the TAV groups of order less than $201$. In fact, using Theorem \ref{thm:main-4} and Proposition \ref{pro:weight}(i), we can make such a list from GroupNames (see \cite{GN}), which is shown in Tables 1 to 3. 

By computer-aided calculation, we can determine the prime knots $K$ up to $10$ crossings which admit a TAV group $G$ of order less than $201$. Using Theorem \ref{thm:pqr} and Corollary \ref{cor:smallest}, we see that for dihedral groups $G_{2,1}=D_{qr}$ and their central extensions $G_{2,n}~(n\geq2)$, the sets $\mathcal{V}(G_{2,n})~(n\geq1)$ contain the knot $K_{2,q,r}$ defined in Section \ref{section:pqr}. In particular, knots written in red are placed in the row associated with their respective smallest TAV groups. Using Theorem \ref{const-thm}, we can construct a knot $K(\alpha,J)$ for a given TAV group, which implies that we can fill in the all blanks in Tables 1 to 3. However, we can say nothing about the minimality of the TAV group, so that we leave some rows blank. 

The knot $6_1^{(3,5)}$ appeared in Tables 1 and 3 is constructed as follows: First, we take the knot $K=6_1$ which admits an epimorphism $f\colon G(K) \to C_3\rtimes F_5$. Applying the similar satellite knot construction for $K$ and $J=T(3,5)$ as in Subsection \ref{subsec:3.0}, we obtain the knot $6_1^{(3,5)}=K(\alpha,J)$. Theorem \ref{const-thm} asserts $6_1^{(3,5)}\in\mathcal{V}(C_3\rtimes F_5)$,  and by computer-aided calculation, 
we see that $\D_{6_1^{(3,5)}}^{\rho\circ f}(t)\not=0$ for the TAV groups of order less than $60$. Hence, it follows that $\ord(6_1^{(3,5)})=60$. Moreover, we can also check that $6_1^{(3,5)}\in\mathcal{V}(A_5)$. Therefore the knot $6_1^{(3,5)}$ admits distinct smallest TAV groups (see Corollary \ref{cor:273}). The knot $8_{11}^{(2,7)}$ appeared in Tables 1 and 2 is constructed in the same way. 

The presentation $(G;G',G/G'\colon\#\text{GroupNames})_{*}$ with a subscript $*$ in Tables 1 to 3 implies that $G$ is a member of the series $*$ of central extensions as in Example \ref{ex:extension}. We see that there are 19 series of central extensions of TAV groups of order less than $201$. Here we simply present a TAV group $G$ by $(|G|,\#\text{GroupNames})$. Note that there are some overlaps among them, which are denoted by underlines (see the items d.\,e.\,and r.\,below). Namely, those groups admit distinct seeds.   
\begin{enumerate}
\item[a.]
$A_4 \rtimes C_{2n}~ (n=1,S_4)$: \\
(24,12), (48,30), (72,42), (96,65), (120,37), (144,123), (168,45), (192,186)
\item[b.]
$C_{15} \rtimes C_{2n}~ (n=1,D_{15}, n=2,\mathrm{Dic}_{15})$: \\
(30,3), (60,3), (90,7), (120,3), (150,11), (180,15)
\item[c.]
$C_{21} \rtimes C_{2n}~ (n=1,D_{21}, n=2,\mathrm{Dic}_{21})$: 
(42,5), (84,5), (126,13), (168,5)
\item[d.]
$\mathrm{SL}_2(\F_3).C_{2n}~(n=1,\mathrm{CSU}_2(\F_3))$: 
(48,28), \underline{(96,66)}, (144,121), \underline{(192,183)}
\item[e.]
$\mathrm{SL}_2(\mathbb{F}_3) \rtimes C_{2n}$: 
(48,29), \underline{(96,66)}, (144,122), \underline{(192,183)}
\item[f.]
$A_5 \times C_n$: 
(60,5), (120,35), (180,19)
\item[g.]
$C_{15} \rtimes C_{4n}$: 
(60,7), (120,7), (180,21)
\item[h.]
$C_{33} \rtimes C_{2n}~ (n=1,D_{33}, n=2,\mathrm{Dic}_{33})$: 
(66,3), (132,3), (198,7)
\item[i.]
$C_{35} \rtimes C_{2n}~ (n=1,D_{35}, n=2,\mathrm{Dic}_{35})$: 
(70,3), (140,3)
\item[j.]
$C_3.A_4 \rtimes C_{2n}~ (n=1,C_3.S_4)$: 
(72,15), (144,33)
\item[k.]
$(C_3 \times A_4) \rtimes C_{2n}~ (n=1,C_3 \rtimes S_4)$: 
(72,43), (144,126)
\item[l.]
$(C_2 \times C_6) \rtimes C_{6n}~ (n=1,S_3 \times A_4)$: 
(72,44), (144,129)
\item[m.]
$C_{39} \rtimes C_{2n}~ (n=1,D_{39}, n=2,\mathrm{Dic}_{39})$: 
(78,5), (156,5)
\item[n.]
$(C_2 \times C_{14}) \rtimes C_{3n}$: 
(84,11), (168,53)
\item[o.]
$C_{45} \rtimes C_{2n}~ (n=1,D_{45}, n=2,\mathrm{Dic}_{45})$: 
(90,3), (180,3)
\item[p.]
$(C_3 \times C_{15}) \rtimes C_{2n}~ (n=1,C_3 \rtimes D_{15})$: 
(90,9), (180,17)
\item[q.]
$(C_4^2 \rtimes C_3) \rtimes C_{2n}~ (n=1,C_4^2 \rtimes S_3)$: 
(96,64), (192,182)
\item[r.]
$\mathrm{SL}_2(\mathbb{F}_3) \rtimes C_{4n}$: 
(96,67), \underline{(192,183)}
\item[s.]
$(C_2^2 \rtimes A_4) \rtimes C_{2n}~ (n=1,C_2^2 \rtimes S_4)$: 
(96,227), (192,1495)
\end{enumerate}

As mentioned in Example \ref{ex:extension}(iii), the group $\mathrm{SL}_2(\F_3).C_{2n}$ is a non-split extension of $C_{2n}$ by $\mathrm{SL}_2(\F_3)$. In Tables 1 to 3, there are several different groups that are extensions of a given group $Q$ by a given group $N$, in which case they are ordered, and the index is shown as a subscript of ``\,.\,'' or ``$\rtimes$''; namely, $N._kQ$ or $N\rtimes_kQ$ for the $k$-th extension. 

Finally, the crossing number $c=c(G)$ of some TAV groups $G$ of order less than $201$ are shown in the most right column in each table. 

A systematic treatment of TAV groups of order greater than 200 will be presented in our forthcoming paper.


\begin{table}\label{table:3}
\begin{center}
\begin{tabular}{|c|l|l|c|}
\hline
$|G|$ & $(G;G',G/G'\colon\#\, \text{GroupNames})$ & $K$ & $c$ \\
\hline
24 & $(S_4;A_4, C_2\colon 12)_{\text{a}}$ & \color{red}{$9_{35},9_{46}$} & $9$ \\
\hline
30 & $(D_{15};C_{15},C_2\colon3)_{\text{b}}$ & \color{red}{$K_{2,3,5}$} & \\ 
\hline
42 & $(D_{21};C_{21},C_2\colon5)_{\text{c}}$ & \color{red}{$K_{2,3,7}$} & \\ 
\hline
48 & $(\mathrm{CSU}_2(\mathbb{F}_3);\mathrm{SL}_2(\mathbb{F}_3),C_2\colon28)_{\text{d}}$ & $9_{35},9_{46}$ & $9$ \\
& $(\mathrm{GL}_2(\mathbb{F}_3);\mathrm{SL}_2(\mathbb{F}_3),C_2\colon29)_{\text{e}}$ & $9_{35},9_{46}$ & $9$ \\
& $(A_4\rtimes C_4;A_4,C_4\colon30)_{\text{a}}$ & $9_{35},9_{46}$ & $9$ \\ 
\hline
60 & $(\mathrm{Dic}_{15};C_{15},C_4\colon3)_{\text{b}}$ & $K_{2,3,5}$ & \\
 & $(A_5;A_5,\{e\}\colon5)_{\text{f}}$ & $9_{35},9_{46}$, \color{red}{$10_{67},10_{120},10_{146},6_1^{(3,5)}$} & $9$ \\
 & $(C_3\rtimes F_5;C_{15},C_4\colon7)_{\text{g}}$ & \color{red}{$6_1^{(3,5)}$} & \\
\hline
66 & $(D_{33};C_{33},C_2\colon3)_{\text{h}}$ & \color{red}{$K_{2,3,11}$} & \\
\hline
70 & $(D_{35};C_{35},C_2\colon3)_{\text{i}}$ & \color{red}{$K_{2,5,7}$} & \\ 
\hline
72 & $(C_3.S_4;C_3.A_4,C_2\colon15)_{\text{j}}$ & $9_{35}$ & $9$ \\
 & $(C_3\times S_4;A_4,C_6\colon42)_{\text{a}}$ & $9_{35},9_{46}$ & $9$ \\
 & $(C_3\rtimes S_4;C_3\times A_4,C_2\colon43)_{\text{k}}$ & $9_{35},9_{46}$ & $9$ \\
 & $(S_3\times A_4;C_2\times C_6,C_6\colon44)_{\text{l}}$ & & \\
\hline
78 & $(D_{39};C_{39},C_2\colon5)_{\text{m}}$ & \color{red}{$K_{2,3,13}$} & \\ 
\hline
84 & $(\mathrm{Dic}_{21};C_{21},C_4\colon5)_{\text{c}}$ & $K_{2,3,7}$ & \\
 & $(C_7\rtimes A_4;C_2\times C_{14},C_3\colon11)_{\text{n}}$ & \color{red}{$8_{11}^{(2,7)}$} & \\
\hline
90 & $(D_{45};C_{45},C_2\colon3)_{\text{o}}$ & & \\ 
 & $(C_3\times D_{15};C_{15},C_6\colon7)_{\text{b}}$ & $K_{2,3,5}$ & \\
 & $(C_3\rtimes D_{15};C_3\times C_{15},C_2\colon9)_{\text{p}}$ & & \\
\hline
96 & $(C_4^2\rtimes S_3; C_4^2\rtimes C_3,C_2\colon64)_{\text{q}}$ & $9_{35},9_{46}$ & $9$ \\
& $(A_4\rtimes C_8;A_4,C_8\colon65)_{\text{a}}$ & $9_{35},9_{46}$ & $9$ \\
& $(Q_8\rtimes\mathrm{Dic}_3;\mathrm{SL}_2(\mathbb{F}_3),C_4\colon66)_{\text{de}}$ & $9_{35},9_{46}$ & $9$ \\
& $(\mathrm{U}_2(\mathbb{F}_3);\mathrm{SL}_2(\mathbb{F}_3),C_4\colon67)_{\text{r}}$ & $9_{35},9_{46}$ & $9$ \\
& $(C_2^2\rtimes S_4;C_2^2\rtimes A_4,C_2\colon227)_{\text{s}}$ & $9_{35},9_{46},10_{120},\color{red}{10_{166}}$ & $9$ \\
\hline
102 & $(D_{51};C_{51},C_2\colon3)$ & \color{red}{$K_{2,3,17}$} & \\ 
\hline
110 & $(D_{55};C_{55},C_2\colon5)$ & \color{red}{$K_{2,5,11}$} & \\ 
\hline
114 & $(D_{57};C_{57},C_2\colon5)$ & \color{red}{$K_{2,3,19}$} & \\ 
\hline
120 & $(C_{15}\rtimes_3C_8;C_{15},C_8\colon3)_{\text{b}}$ & $K_{2,3,5}$ & \\
 & $(\mathrm{SL}_2(\mathbb{F}_5);\mathrm{SL}_2(\mathbb{F}_5),\{e\}\colon5)$ & $9_{35},9_{46},10_{67},10_{120},10_{146}$ & $9$ \\
 & $(C_{15}\rtimes C_8;C_{15},C_8\colon7)_{\text{g}}$ & $6_1^{(3,5)}$ & \\
 & $(S_5;A_5,C_2\colon34)$ & ${\color{red}{8_{15},9_{25}}},9_{35},\color{red}{9_{39},9_{41},9_{49},10_{58}}$ & $8$ \\
 & $(C_2\times A_5;A_5,C_2\colon35)_{\text{f}}$ & $9_{35},9_{46},10_{67},10_{120},10_{146},6_1^{(3,5)}$ & $9$ \\
 & $(C_5\times S_4;A_4,C_{10}\colon37)_{\text{a}}$ & $9_{35},9_{46}$ & $9$ \\
 & $(C_5\rtimes S_4;C_5\times A_4,C_2\colon38)$ & & \\
 & $(D_5\times A_4;C_2\times C_{10},C_6\colon39)$ & & \\
\hline
126 & $(D_{63};C_{63},C_2\colon5)$ & & \\
 & $(S_3\times C_7\rtimes C_3;C_{21},C_6\colon8)$ & & \\
 & $(C_3\rtimes F_7;C_{21},C_6\colon9)$ & & \\
 & $(C_3\times D_{21};C_{21},C_6\colon13)_{\text{c}}$ & $K_{2,3,7}$ & \\
 & $(C_3\rtimes D_{21};C_3\times C_{21},C_2\colon15)$ & & \\
\hline
\end{tabular}

\vspace{2mm}

\caption{TAV groups of order less than $130$.}

\end{center}
\end{table}

\begin{table}\label{table:3}
\begin{center}
\begin{tabular}{|c|l|l|c|}
\hline
$|G|$ & $(G;G',G/G'\colon\#\, \text{GroupNames})$ & $K$ & $c$ \\
\hline
130 & $(D_{65};C_{65},C_2\colon3)$ & \color{red}{$K_{2,5,13}$} & \\
\hline
132 & $(\mathrm{Dic}_{33};C_{33},C_4\colon3)_{\text{h}}$ & $K_{2,3,11}$ & \\
\hline
138 & $(D_{69};C_{69},C_2\colon3)$ & \color{red}{$K_{2,3,23}$} & \\
\hline
140 & $(\mathrm{Dic}_{35};C_{35},C_4\colon3)_{\text{i}}$ & $K_{2,5,7}$ & \\
 &  $(C_7\rtimes F_5;C_{35},C_4\colon6)$ & & \\
\hline
144 & $(Q_8.D_9;Q_8\rtimes C_9,C_2\colon31)$ & $9_{35}$ & $9$ \\
& $(Q_8\rtimes D_9;Q_8\rtimes C_9,C_2\colon32)$ & $9_{35}$ & $9$ \\
& $(C_6.S_4;C_3.A_4,C_4\colon33)_{\text{j}}$ & $9_{35}$ & $9$\\
& $(C_3\times \mathrm{CSU}_2(\mathbb{F}_3);\mathrm{SL}_2(\mathbb{F}_3),C_6\colon121)_{\text{d}}$ & $9_{35},9_{46}$ & $9$ \\
& $(C_3\times \mathrm{GL}_2(\mathbb{F}_3);\mathrm{SL}_2(\mathbb{F}_3),C_6\colon122)_{\text{e}}$ & $9_{35},9_{46}$ & $9$ \\
& $(C_3\times A_4\rtimes C_4;A_4,C_{12}\colon123)_{\text{a}}$ & $9_{35},9_{46}$ & $9$ \\
& $({C_6.}_{5}S_4;C_3\times \mathrm{SL}_2(\mathbb{F}_3),C_2\colon124)$ & $9_{35},9_{46}$ & $9$ \\
& $({C_6.}_{6}S_4;C_3\times \mathrm{SL}_2(\mathbb{F}_3),C_2\colon125)$ & $9_{35},9_{46}$ & $9$ \\
& $({C_6.}_{7}S_4;C_3\times A_4,C_4\colon126)_{\text{k}}$ & $9_{35},9_{46}$ & $9$ \\
& $(\mathrm{Dic}_3.A_4;C_3\times Q_8,C_6\colon127)$ & & \\
& $(S_3\times \mathrm{SL}_2(\mathbb{F}_3);C_3\times Q_8,C_6\colon128)$ & & \\
& $(\mathrm{Dic}_3\times A_4;C_2\times C_6,C_{12}\colon129)_{\text{l}}$ & & \\
\hline
150 & $(D_{75};C_{75},C_2\colon3)$ & & \\
 &  $(C_5^2\rtimes S_3;C_5^2\rtimes C_3,C_2\colon5)$ & & \\
 & $(C_5\times D_{15};C_{15},C_{10}\colon11)_{\text{b}}$ & $K_{2,3,5}$ & \\
 & $(C_5\rtimes D_{15};C_5\times C_{15},C_2\colon12)$ & & \\
\hline
154 & $(D_{77};C_{77},C_2\colon3)$ & \color{red}{$K_{2,7,11}$} & \\
\hline
156 & $(\mathrm{Dic}_{39};C_{39},C_4\colon5)_{\text{m}}$ & $K_{2,3,13}$ & \\
 & $(C_{39}\rtimes C_4;C_{39},C_4\colon10)$ & & \\
 & $(C_{13}\rtimes A_{4};C_2\times C_{26},C_3\colon14)$ & & \\
\hline
160 & $(C_2^4\rtimes D_5;C_2^4\rtimes C_5,C_2\colon234)$ & & \\
\hline
168 & $(C_{21}\rtimes C_8;C_{21},C_8\colon5)_{\text{c}}$  & $K_{2,3,7}$ & \\
 & $(C_{14}.A_4;C_7\times Q_8,C_3\colon23)$ & & \\
 & $(\mathrm{GL}_3(\mathbb{F}_2);\mathrm{GL}_3(\mathbb{F}_2),\{e\}\colon42)$ & $\color{red}{7_4,8_3}$ & $7$ \\
& $(\mathrm{A\Gamma L}_1(\mathbb{F}_8);F_8,C_3\colon43)$ & & \\
& $(C_7\times S_4;A_4,C_{14}\colon45)_{\text{a}}$ & $9_{35},9_{46}$ & $9$ \\
& $(C_7\rtimes S_4;C_7\times A_4,C_2\colon46)$ & & \\
& $(A_4\times D_7;C_2\times C_{14},C_6\colon48)$ & & \\
& $(D_7\rtimes A_4;C_2\times C_{14},C_6\colon49)$ & & \\
& $(C_2\times C_7\rtimes A_4;C_2\times C_{14},C_6\colon53)_{\text{n}}$ & $8_{11}^{(2,7)}$ & \\
\hline
\end{tabular}

\vspace{2mm}

\caption{TAV groups of order $130$ to $169$.}

\end{center}
\end{table}

\begin{table}\label{table:3}
\begin{center}
\begin{tabular}{|c|l|l|c|}
\hline
$|G|$ & $(G;G',G/G'\colon\#\, \text{GroupNames})$ & $K$ & $c$ \\
\hline
170 & $(D_{85};C_{85},C_2\colon3)$ & \color{red}{$K_{2,5,17}$} & \\
\hline
174 & $(D_{87};C_{87},C_2\colon3)$ & \color{red}{$K_{2,3,29}$} & \\
\hline
180 & $(\mathrm{Dic}_{45};C_{45},C_4\colon3)_{\text{o}}$ & & \\
& $(C_9\rtimes F_5;C_{45},C_4\colon6)$ & & \\
& $(C_3\times \mathrm{Dic}_{15};C_{15},C_{12}\colon15)_{\text{b}}$ & $K_{2,3,5}$ & \\
& $(C_3\rtimes \mathrm{Dic}_{15};C_3\times C_{15},C_4\colon17)_{\text{p}}$ & & \\
& $(C_3\times A_5;A_5,C_3\colon19)_{\text{f}}$ & $9_{35},9_{46},10_{67},10_{120},10_{146},6_1^{(3,5)}$ & $9$ \\
& $(C_3\times C_3\rtimes F_5;C_{15},C_{12}\colon21)_{\text{g}}$ & $6_1^{(3,5)}$ &  \\
& $(C_3^2\rtimes_3 F_5;C_3\times C_{15},C_4\colon22)$ & & \\
& $(C_3^2\rtimes\mathrm{Dic}_5;C_3\times C_{15},C_4\colon24)$ & & \\
& $(C_3^2\rtimes F_5;C_3\times C_{15},C_4\colon25)$ & & \\
\hline
182 & $(D_{91};C_{91},C_2\colon3)$ & \color{red}{$K_{2,7,13}$} & \\
\hline
186 & $(D_{93};C_{93},C_2\colon5)$ & \color{red}{$K_{2,3,31}$} & \\
\hline
190 & $(D_{95};C_{95},C_2\colon3)$ & \color{red}{$K_{2,5,19}$} & \\
\hline
192 & $({C_2^3.}_7S_4;{C_2^3.}_3 A_4,C_2\colon180)$ & $9_{35},9_{46}$ & $9$ \\
& $({C_2^3.}_8S_4;{C_2^3.}_3 A_4,C_2\colon181)$ & $9_{35},9_{46}$ & $9$ \\
& $({C_2^3.}_9S_4;{C_4^2}\rtimes C_3,C_4\colon182)_{\text{q}}$ & $9_{35},9_{46}$ & $9$ \\
& $(C_2.\mathrm{U}_2(\mathbb{F}_3);\mathrm{SL}_2(\mathbb{F}_3),C_8\colon183)_{\text{der}}$ & $9_{35},9_{46}$ & $9$ \\
& $(C_2^4\rtimes \mathrm{Dic}_3;C_2^2\rtimes A_4,C_4\colon184)$ & $9_{35},9_{46},10_{120},10_{166}$ & $9$ \\
& $(C_4^2\rtimes\mathrm{Dic}_3;C_4^2\rtimes C_3,C_4\colon185)$ & $9_{35},9_{46}$ & $9$ \\
& $(A_4\rtimes C_{16};A_4,C_{16}\colon186)_{\text{a}}$ & $9_{35},9_{46}$ & $9$ \\
&$({C_8.}_7S_4;\mathrm{SL}_2(\mathbb{F}_3),C_8\colon187)$ & $9_{35},9_{46}$ & $9$ \\
& $({Q_8.}_1S_4;Q_8\rtimes A_4,C_2\colon1489)$ & $9_{35},9_{46},10_{120},10_{166}$  & $9$ \\
& $(Q_8\rtimes S_4;Q_8\rtimes A_4,C_2\colon1490)$ & $9_{35},9_{46},10_{120},10_{166}$ & $9$ \\
& $(C_2^3.S_4;C_2^3\rtimes A_4,C_2\colon1491)$ & $9_{35},9_{46},10_{120},10_{166}$ & $9$ \\
& $(Q_8.S_4;C_2^3\rtimes A_4,C_2\colon1492)$ & $9_{35},9_{46},10_{120},10_{166}$  & $9$ \\
& $(C_2^3\rtimes S_4;C_2^3\rtimes A_4,C_2\colon1493)$ & $9_{35},9_{46},10_{120},10_{166}$ & $9$ \\
& $(Q_8\rtimes_2S_4;C_2^3\rtimes A_4,C_2\colon1494)$ & $9_{35},9_{46},10_{120},10_{166}$ & $9$ \\
& $(C_2^4\rtimes_4\mathrm{Dic}_3;C_2^2\rtimes A_4,C_4\colon1495)_{\text{s}}$ & $9_{35},9_{46},10_{120},10_{166}$ & $9$ \\
\hline
198 & $(D_{99};C_{99},C_2\colon3)$ & & \\
& $(C_3\times D_{33};C_{33},C_6\colon7)_{\text{h}}$ & $K_{2,3,11}$ & \\
& $(C_3\rtimes D_{33};C_3\times C_{33},C_2\colon9)$ & & \\
\hline
\end{tabular}

\vspace{2mm}

\caption{TAV groups of order $170$ to $200$.}

\end{center}
\end{table}

\subsection*{Acknowledgments}
The authors are supported in part by 
JSPS KAKENHI Grant Numbers JP23K20799, 
JP25K07012, and JP25K17252. 

\bibliographystyle{amsplain}

\end{document}